\newtheorem*{corollary*}{Corollary}
\theoremstyle{definition}
\title{The homotopy type of the PL cobordism category. II}
\author{Mauricio Gomez Lopez}
\email{gomezlom@lafayette.edu}
\address{Department of Mathematics \\
	       Lafayette College, Pardee Hall\\
	       Easton, PA, 18042 \\USA}
\date{\today}
\begin{document}

\maketitle 

\begin{abstract}
In this article, we prove the PL analogue of the theorem of Galatius, Madsen, Tillmann, and Weiss which describes the homotopy type of the smooth cobordism category. More specifically, we introduce the PL
Madsen-Tillmann spectrum $\mathbf{MT}PL(d)$ and prove that there is a 
weak homotopy equivalence of the form 
$B\mathsf{Cob}^{\mathrm{PL}}_d \simeq \Omega^{\infty-1}\mathbf{MT}PL(d)$. 
We also discuss how to adjust the methods of this paper to obtain the topological version of our main result.
\end{abstract}

\tableofcontents

\section{Introduction} \label{introduction}

The goal of the present paper is to complete the project initiated by the author in \cite{GL}. In that article, we started the proof of the piecewise linear version of the main theorem of Galatius, Madsen, Tillmann, and Weiss from \cite{GMTW}. This result gives an explicit description of the homotopy type of the smooth cobordism category $\mathsf{Cob}_d$. More specifically, the authors of \cite{GMTW} proved that there is a weak homotopy equivalence
\begin{equation} \label{hpty.cob}
B\mathsf{Cob}_d \simeq \Omega^{\infty-1}\mathbf{MT}O(d),
\end{equation}
where $\mathbf{MT}O(d)$ is the Madsen-Tillmann spectrum.  

In \cite{GL}, our strategy was to adapt the proof of Galatius and Randall-Williams given in \cite{GRW} for (\ref{hpty.cob}) in the context of piecewise linear topology. To carry out this plan, we introduced a piecewise linear analogue $\mathsf{Cob}^{\mathrm{PL}}_d$ of the smooth cobordism category. This construction required introducing a PL version $\Psi_d(\mathbb{R}^N)$ of the space of smooth manifolds defined by Galatius in \cite{Ga}. As in the smooth case, our spaces of PL manifolds assemble into a spectrum $\Psi_d$, and the main result from \cite{GL} is the following. 

\theoremstyle{plain} \newtheorem{thma}{Theorem}[section]
\renewcommand{\thethma}{\Alph{thma}}

\begin{thma} \label{thma}
There is a weak homotopy equivalence $B\mathsf{Cob}^{\mathrm{PL}}_d \simeq \Omega^{\infty-1}\Psi_d$. 
\end{thma}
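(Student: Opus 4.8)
The plan is to follow the Galatius--Randall-Williams strategy from \cite{GRW}, now transported to the piecewise linear category using the tools that are already in place from \cite{GL}. The weak equivalence $B\mathsf{Cob}^{\mathrm{PL}}_d \simeq \Omega^{\infty-1}\Psi_d$ will be proved by factoring it through a chain of intermediate categories and spaces, each map of which is shown to be a weak equivalence. Concretely, I would introduce the auxiliary category $\mathcal{C}_d^{\mathrm{PL}}$ whose objects are pairs $(t, M)$ with $t \in \mathbb{R}$ and $M$ a closed PL $(d-1)$-submanifold of $\{t\} \times \mathbb{R}^{N-1}$ (for $N$ large, and then stabilized), and whose morphisms are PL cobordisms properly embedded in a slab $[t_0,t_1]\times\mathbb{R}^{N-1}$; the point of this variant is that its nerve admits a direct scanning/microflexibility argument relating it to the space $\Psi_d$. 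The skeleton of the argument is: (i) show the forgetful functor $\mathcal{C}_d^{\mathrm{PL}} \to \mathsf{Cob}_d^{\mathrm{PL}}$ induces a weak equivalence on classifying spaces, by a cofinality or homotopy-colimit argument that compares the different ways of recording the "time" coordinate; (ii) identify $B\mathcal{C}_d^{\mathrm{PL}}$ with a space $D_d^{\mathrm{PL}}$ of PL manifolds equipped with a collapse-compatible decomposition, via a Segal-style nerve-versus-space comparison (a semi-simplicial resolution whose realization computes both sides); and (iii) show the inclusion $D_d^{\mathrm{PL}} \hookrightarrow \Psi_d(\mathbb{R}^N)$, followed by stabilization $N \to \infty$, is a weak equivalence, and assemble the deloopings to land in $\Omega^{\infty-1}\Psi_d$.

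The main technical engine will be a PL parametrized surgery / local-to-global principle playing the role of the sheaf-theoretic arguments in \cite{GRW}: one needs that the relevant presheaves of PL submanifolds on $\mathbb{R}^N$ (or on cubes) satisfy a microflexibility-type gluing property, so that the homotopy colimit over the category of such local data recovers the global section space. In the smooth setting this is where transversality, tubular neighborhoods, and smoothing of corners are used; in the PL setting these must be replaced by their piecewise linear counterparts --- PL general position, regular neighborhood theory, and the PL handle/collar structure --- most of which I expect to be available from \cite{GL} or from standard PL topology (Rourke--Sanderson, Hudson). The deloopings will be handled exactly as in the smooth case: each level $\Psi_d(\mathbb{R}^N)$ is built from $\Psi_d(\mathbb{R}^{N-1})$ by a homotopy (co)fiber sequence realizing the spectrum structure maps, and the classifying space of the cobordism category sits as the $d$-fold (really $(d{-}1)$-indexed) iterate, giving the shift $\Omega^{\infty-1}$.

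The step I expect to be the genuine obstacle is establishing the PL microflexibility of the sheaf of PL submanifolds with the required boundary/collar conditions, i.e.\ the assertion that a compactly supported PL isotopy defined near a closed set extends, up to homotopy in the section space, over a neighborhood while controlling the PL structure near the collar. Smooth microflexibility is essentially automatic from the existence of tubular neighborhoods and the openness of transversality; the PL analogue is more delicate because "general position" perturbations are not canonical and can destroy carefully arranged product structures near the cutting hyperplanes, so one has to do the extension by hand using relative regular neighborhoods and PL collars, being careful that all constructions are natural enough to be performed in families. Once this PL microflexibility statement is in place, the remaining comparisons --- cofinality of the time-coordinate functor, the Segal nerve-to-space identification, and the stabilization in $N$ --- should go through by formal arguments essentially identical to those in \cite{GMTW} and \cite{GRW}, with only notational changes. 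I would therefore isolate the PL microflexibility lemma early, prove it carefully, and then run the rest of the machine on autopilot.
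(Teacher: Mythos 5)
You should first note that the present paper does not actually prove Theorem A: it is the main result of Part I (\cite{GL}) and is simply quoted here so that it can be combined with Theorem B ($\Psi_d \simeq \mathbf{MT}PL(d)$), which is what this paper is really about. So there is no proof in this text to compare yours against in detail; the only information given is that \cite{GL} proceeds by adapting the Galatius--Randal-Williams argument of \cite{GRW}, after setting up the quasi-PL space formalism, the simplicial sets $\Psi_d(U)_{\bullet}$, and the subdivision machinery reviewed in Section 2 and Appendix B. At the level of overall strategy your outline --- an auxiliary poset-like category recording a time coordinate, a Segal-style comparison of its nerve with a space of manifolds, and a delooping argument landing in $\Omega^{\infty-1}\Psi_d$ --- is consistent with that plan.

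That said, as a proof your proposal has a genuine gap, and it sits exactly where you locate it: everything is reduced to an unproven ``PL microflexibility'' lemma, and the reduction itself is only asserted. The tools that the PL setting actually supplies (and that this paper leans on for Theorem B) are of a different flavor: PL submersions are microfibrations with PL microlifts, the Kuiper--Lashof representation theorem replaces tubular neighborhoods, and the homotopy-theoretic flexibility of the sheaves $\Psi_d(U)$ is extracted from the quasi-PL gluing condition together with barycentric subdivision (Propositions \ref{quasi.kan} and \ref{subdivision.cover}), not from a Gromov-style microflexibility statement; you would need to either prove your lemma from these inputs or reorganize the argument around them. There is also a concrete error in your last step: the shift $\Omega^{\infty-1}$ does not arise from a $d$-fold or $(d-1)$-fold iteration. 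In the GRW scheme one deloops once for each ambient coordinate direction beyond the first, so $B\mathsf{Cob}^{\mathrm{PL}}_d$ is compared with $\Omega^{N-1}$ of the $N$-th level $\Psi_d(\mathbb{R}^N)_{\bullet}$ and one then lets $N \to \infty$ along the structure maps of Definition \ref{spectrum.man}; the dimension $d$ of the manifolds plays no role in the count of deloopings. As written, your step (iii) would not assemble into the stated $\Omega^{\infty-1}\Psi_d$.
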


Our main theorem in the present paper is the following. 

\theoremstyle{plain} \newtheorem{thmb}[thma]{Theorem}

\begin{thmb} \label{thmb}
There is a weak equivalence of spectra $\Psi_d \simeq \mathbf{MT}PL(d)$. 
\end{thmb}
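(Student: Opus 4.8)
The plan is to prove Theorem B levelwise: for each $N$ I would produce a weak homotopy equivalence $\Psi_d(\mathbb{R}^N)\xrightarrow{\ \simeq\ }\mathbf{MT}PL(d)_N$ compatible with the structure maps on the two sides — on $\Psi_d$ the one induced by $\mathbb{R}^N\subseteq\mathbb{R}^{N+1}$, $M\mapsto M\times\{0\}$, and on $\mathbf{MT}PL(d)$ the stabilization $\gamma^{\perp}_{d,N}\oplus\underline{\mathbb{R}}\cong\gamma^{\perp}_{d,N+1}$ — so that the induced map of spectra is a levelwise, hence stable, equivalence. This mirrors the role of Galatius's theorem in the smooth case, which \cite{GRW} use to pass from the $\Psi$-spectrum to $\mathbf{MT}O(d)$. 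The key observation is that $\mathbf{MT}PL(d)_N$ is the homotopy type of the subspace $\ell_d(\mathbb{R}^N)\subseteq\Psi_d(\mathbb{R}^N)$ consisting of the empty manifold together with the \emph{PL-linear} $d$-planes: the PL $d$-submanifolds $P\subseteq\mathbb{R}^N$ invariant under the radial dilations centered at some point of $P$, equivalently cones on PL $(d-1)$-spheres in $S^{N-1}$. For a microbundle model of $\mathbf{MT}PL(d)$ this is the PL counterpart of the smooth homeomorphism $\mathrm{Th}(\gamma^{\perp}_{d,N})\cong\{\text{affine $d$-planes}\}\cup\{\emptyset\}$, after using the PL Kister--Mazur theorem of Kuiper--Lashof to replace the universal PL microbundle over the PL Grassmannian by a genuine PL $\mathbb{R}^d$-bundle; under it the two structure maps visibly match, both being ``adjoin a coordinate, or multiply by $\mathbb{R}$''.

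Theorem B thus reduces to showing that $\ell_d(\mathbb{R}^N)\hookrightarrow\Psi_d(\mathbb{R}^N)$ is a weak homotopy equivalence, and I would prove this following Galatius's smooth argument, whose essential geometric input has a PL incarnation I will call \emph{zoom-in}: for $M\in\Psi_d(\mathbb{R}^N)$ and $x\in\mathbb{R}^N$, the dilations $t\cdot(M-x)+x$ converge in $\Psi_d(\mathbb{R}^N)$ as $t\to\infty$ to $\emptyset$ when $x\notin M$ and to the tangent cone $T_xM\in\ell_d(\mathbb{R}^N)$ when $x\in M$ — the latter limit existing because, in any triangulation of $M$ having $x$ as a vertex, a neighborhood of $x$ in $M$ is the linear cone $x*\mathrm{lk}(x,M)$. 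Running this zoom-in simultaneously over all $x$, and feeding it through the homotopy-sheaf properties of $\Psi_d(-)$ established in \cite{GL}, contracts the germ of $M$ at $x$ onto a PL-linear plane through $x$ (or onto $\emptyset$), and hence deforms an arbitrary compact family $\{M_s\}_{s\in D^k}$, resp.\ $\{M_s\}_{s\in D^k\times D^1}$ for injectivity of $\pi_*$, into $\ell_d(\mathbb{R}^N)$ rel the locus where it already lies there; a preliminary PL general-position perturbation makes the scanning locus $\{(s,x):x\in M_s\}\subseteq D^k\times\mathbb{R}^N$ tame enough for the zoom-in to be carried out continuously. Compatibility with the structure maps is then immediate from the identification in the previous paragraph.

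The main obstacle, and the point at which the PL category genuinely differs from the smooth one, is that there is no honest PL tangent space — only the tangent microbundle — so the argument must be run with \emph{tangent cones}, which forces a parametrized/relative form of the local conicality of PL manifolds: one must show $t\cdot(M_s-x)+x$ converges uniformly over compact families of pairs $(M_s,x)$ with PL-linear limit, i.e.\ triangulate (or at least conically stratify) families of PL submanifolds of $\mathbb{R}^N$ with enough control to pass continuously to the $t\to\infty$ limit and to interface with PL general position. This is the technical heart of the paper; by comparison the remaining issues — invoking Kuiper--Lashof so that $\mathbf{MT}PL(d)_N$ is an honest Thom space, and the delicacy of the PL Grassmannian in codimensions $N-d\le 2$, where PL-linear planes may be knotted — are minor, the latter in any case invisible to the spectrum $\mathbf{MT}PL(d)$ since only the colimit over $N$ enters. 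The announced topological version proceeds identically, with the topological transversality of Kirby--Siebenmann taking over the role of PL general position in the last step.
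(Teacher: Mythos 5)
Your overall architecture is the right one (scanning in the style of Hatcher's sketch, with Kuiper--Lashof handling the microbundle-versus-bundle issue), but the step you yourself identify as ``the technical heart'' is a genuine gap, and it is precisely the step the paper is organized to avoid. You propose to deform a family $\{M_s\}$ into the subspace of planes by the zoom-in $t\cdot(M_s-x)+x\to T_xM_s$, which requires that tangent cones exist, are locally flat planes, and vary continuously (indeed, assemble into an element of $\Psi_d(\mathbb{R}^N)(D^k)$) over a compact family, uniformly in $t$. For a single $M$ and a vertex $x$ of a triangulation this is the cone structure $x*\mathrm{lk}(x,M)$, but for a PL submersion $W\to\Delta^p$ there is no parametrized version: the link of a point in the fiber is not locally constant in $s$, cone structures on the fibers need not fit together, and the $t\to\infty$ limit of the rescaled family need not be closed in $\Delta^p\times\mathbb{R}^N$ or be a PL submersion over the base. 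No PL general-position perturbation produces such uniform conicality, and the paper supplies no substitute for it because it never takes this limit.

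What the paper does instead: (1) it first rigidifies by choosing a \emph{marking function} (a base point in each nonempty fiber, forced to be $\mathbf{0}$ when the fiber meets $\mathbf{0}$), and proves $\widetilde{\Psi}_d\to\Psi_d$ is an equivalence by scanning that only ever enlarges a fixed cube $[-\epsilon,\epsilon]^N$ around the marked points --- it pushes everything outside a regular neighborhood of the zero-set to infinity but never rescales the manifold to a limiting cone; (2) the comparison with planes is then a separate homotopy-theoretic argument at the level of \emph{germs} at the marked section: the germ map $\Psi_d^{\circ}(\mathbb{R}^N)_\bullet\to\widetilde{\mathrm{Gr}}_d(\mathbb{R}^N)_\bullet$ is shown to be a Kan fibration with contractible fibers, the germ Grassmannian is identified with $\mathrm{PL}(\mathbb{R}^N)/\mathrm{PL}(\mathbb{R}^N,\mathbb{R}^d)$ and compared with $\mathcal{H}(\mathbb{R}^N)/\mathcal{H}(\mathbb{R}^N,\mathbb{R}^d)=\mathrm{Gr}_d(\mathbb{R}^N)_\bullet$ via Kuiper--Lashof's lemma, and the normal data near the marked section is produced by the Kuiper--Lashof representation theorem for microbundle pairs. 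Relatedly, your model $\ell_d(\mathbb{R}^N)$ of ``cones on PL $(d-1)$-spheres'' is not obviously the correct one: the paper's Grassmannian consists of families $W$ for which $(P\times\mathbb{R}^N,W)\to P$ is a locally trivial $(\mathbb{R}^N,\mathbb{R}^d)$-bundle, and identifying the space of cones (with its topology as a subspace of $\Psi_d$) with this Grassmannian is itself a nontrivial claim of the same order as the one you are trying to prove. To repair your proof you would either have to establish a parametrized local conicality theorem for PL submersions --- which is not available --- or reroute through germs and microbundle normal data as the paper does.
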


Combining Theorems A and B, we obtain the PL analogue of the main theorem from \cite{GMTW}. 

\theoremstyle{plain} \newtheorem{thmc}[thma]{Theorem}

\begin{thmc} \label{thmc}
There is a weak homotopy equivalence $B\mathsf{Cob}^{\mathrm{PL}}_d \simeq \Omega^{\infty-1}\mathbf{MT}PL(d)$. 
\end{thmc}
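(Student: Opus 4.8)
\section*{Proof proposal for Theorem C}

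The plan is straightforward: Theorem C is a formal consequence of Theorems A and B. Theorem A, established in \cite{GL}, identifies the classifying space $B\mathsf{Cob}^{\mathrm{PL}}_d$ with $\Omega^{\infty-1}\Psi_d$, the (shifted) infinite loop space of the spectrum $\Psi_d = \{\Psi_d(\mathbb{R}^N)\}_N$ assembled from the spaces of PL manifolds, while Theorem B, which occupies the bulk of the present paper, provides a weak equivalence of spectra $\Psi_d \simeq \mathbf{MT}PL(d)$. It therefore remains only to check that the functor $\Omega^{\infty-1}$ carries a weak equivalence of spectra to a weak homotopy equivalence of spaces, and then to concatenate the two resulting equivalences.

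For the homotopy invariance of $\Omega^{\infty-1}$ I would argue as follows. Recall that for a spectrum $\mathbf{E} = \{E_N\}$ with structure maps $\sigma_N\colon \Sigma E_N \to E_{N+1}$ one has $\Omega^{\infty-1}\mathbf{E} = \operatorname{hocolim}_N \Omega^{N-1}E_N$, where the maps in the colimit are obtained by applying $\Omega^{N-1}$ to the adjoints $E_N \to \Omega E_{N+1}$ of the $\sigma_N$. If $f\colon \mathbf{E} \to \mathbf{E}'$ is a map of spectra which is a weak equivalence in each level, then, since all the spaces in sight are well-pointed (this is part of the explicit construction of $\Psi_d$ in \cite{GL}, and is equally clear for the simplicial Thom-space model of $\mathbf{MT}PL(d)$), each $\Omega^{N-1}f_N$ is a weak homotopy equivalence, and hence so is the induced map of homotopy colimits $\Omega^{\infty-1}f$. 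If one only knows that $f$ is a stable equivalence, one first applies the usual levelwise fibrant replacement to make $\mathbf{E}$ and $\mathbf{E}'$ into $\Omega$-spectra; this changes neither the weak homotopy type of $\Omega^{\infty-1}$ nor the property of $f$ being an equivalence, and now $f$ is a levelwise equivalence, so the previous case applies. In either case $\Omega^{\infty-1}$ sends the weak equivalence of Theorem B to a weak homotopy equivalence $\Omega^{\infty-1}\Psi_d \simeq \Omega^{\infty-1}\mathbf{MT}PL(d)$.

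Combining this with Theorem A yields the chain
\[
B\mathsf{Cob}^{\mathrm{PL}}_d \;\simeq\; \Omega^{\infty-1}\Psi_d \;\simeq\; \Omega^{\infty-1}\mathbf{MT}PL(d),
\]
which is precisely the assertion of Theorem C. There is no genuine obstacle at this stage: all of the mathematical content resides in Theorems A and B, and the only point requiring care is the (standard) homotopy invariance of the infinite loop space functor, for which it suffices that the spectra involved be levelwise well-pointed — a property guaranteed by the explicit PL and simplicial models used to define $\Psi_d$ and $\mathbf{MT}PL(d)$.
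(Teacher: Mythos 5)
Your proposal is correct and follows exactly the route the paper takes: Theorem C is obtained by combining Theorem A with Theorem B, the only additional observation being that $\Omega^{\infty-1}$ carries the (in fact levelwise, by Theorems \ref{marking.equivalence} and \ref{hatcher}) weak equivalence of spectra $\mathbf{MT}PL(d)\simeq\Psi_d$ to a weak homotopy equivalence of spaces. Your extra care about well-pointedness and fibrant replacement is harmless but not needed here, since the equivalence of Theorem B is already a levelwise weak equivalence of Kan simplicial sets.
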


In the last two statements, $\mathbf{MT}PL(d)$ denotes the \textit{PL Madsen-Tillmann spectrum}, which we will introduce in Section \S \ref{section.grass}. The main results from PL topology that we will use as input for the proof of 
Theorem \ref{thmb} are the following: 

\begin{itemize}

\item[$\cdot$] Any PL submersion is a microfibration. Moreover, we can guarantee the existence of microlifts which are piecewise linear. 

\item[$\cdot$] The Bundle Representation Theorem of Kuiper and Lashof, which asserts that any 
$d$-dimensional PL microbundle contains a PL $\mathbb{R}^d$-bundle. 

\end{itemize}

Both of these results, along with other background material, will be reviewed in the appendices. 

Let us give an outline of the proof for Theorem \ref{thmb}. Roughly, the strategy is to make a PL adaptation of the scanning argument sketched by Hatcher in \cite{Hat}, which goes as follows: 

\begin{itemize}

\item[(i)] Elements of the space of smooth manifolds which are far away from the origin $\mathbf{0} \in \mathbb{R}^N$ get pushed to infinity.

\item[(ii)] On the other hand, for an element $W$ sufficiently close to $\mathbf{0}$, one chooses a point $x$ in $W$ and an open ball $B(x,\epsilon)$ centered at $x$. The intersection of $B(x,\epsilon)$ with $W$ must be diffeomorphic to a disk of dimension $d$.

\item[(iii)] Then, scanning the ball $B(x,\epsilon)$, the manifold $W$ is deformed into a $d$-dimensional plane. The amount of scanning performed depends on how close $W$ is to the origin. 

\end{itemize}

In Section \S \ref{first.section}, we review the definition of the simplicial set of PL manifolds 
$\Psi_d (\mathbb{R}^N)$ defined in \cite{GL} and collect some of its basic properties. We will also discuss the construction of the spectrum $\Psi_d$.  In Section \S \ref{section.marking}, we introduce a variant of the simplicial set $\Psi_d (\mathbb{R}^N)$, denoted by $\widetilde{\Psi}_d (\mathbb{R}^N)$, in which manifolds come equipped with base-points. In particular, a 0-simplex of $\widetilde{\Psi}_d (\mathbb{R}^N)$  will be a PL submanifold $W$ of $\mathbb{R}^N$ and a specified point $x\in W$. This will be the point at which we are going to scan $W$.  

In Section \S \ref{section.grass}, we define the PL analogues of the Grassmannian, the affine Grassmannian, and the Thom space $\mathrm{Th}(\gamma^{\perp}_{d,N})$, which will be denoted by $\mathrm{Gr}_d(\mathbb{R}^N)$, $A\mathrm{Gr}_d(\mathbb{R}^N)$, and
$A\mathrm{Gr}_d^+(\mathbb{R}^N)$ respectively.  
Informally, we can describe the $p$-simplices of these simplicial sets as follows: 

\begin{itemize}

\item[$\cdot$] In $\mathrm{Gr}_d(\mathbb{R}^N)$, a $p$-simplex is a family, parameterized by $\Delta^p$, of locally flat $d$-dimensional PL planes in $\mathbb{R}^N$ which intersect the origin $\mathbf{0}\in \mathbb{R}^N$.

\item[$\cdot$] A $p$-simplex in $A\mathrm{Gr}_d(\mathbb{R}^N)$ is a family over $\Delta^p$ of locally flat $d$-dimensional PL planes in $\mathbb{R}^N$. In this case, the planes are not required to intersect the origin. 

\item[$\cdot$] Finally, a $p$-simplex in $A\mathrm{Gr}_d^+(\mathbb{R}^N)$ is a family over $\Delta^p$
of PL submanifolds $W_x$ of $\mathbb{R}^N$, where either $W_x$ is empty or is a locally flat $d$-plane. 

\end{itemize}

In all of the above cases, the manifolds parameterized by $\Delta^p$ in a $p$-simplex assemble into a manifold $W$ such that the projection $W \rightarrow \Delta^p$ is a \textit{piecewise linear submersion of codimension $d$} (see Appendix \ref{sec.pl.sub}). Following the line of ideas in \cite{Hat}, we view $A\mathrm{Gr}_d^+(\mathbb{R}^N)$ as the one-point compactification of $A\mathrm{Gr}_d(\mathbb{R}^N)$. The spaces in the sequence $\{ A\mathrm{Gr}_d^+(\mathbb{R}^N) \}_{N}$ are going to be the levels of the PL Madsen-Tillmann spectrum $\mathbf{MT}PL(d)$. 

In Section \S \ref{section.grass}, we also introduce an enhancement of $\widetilde{\Psi}_d (\mathbb{R}^N)$, denoted by $\widetilde{\Psi}_d^{\hat{\circ}}(\mathbb{R}^N)$, where we specify normal data around the base-points. For a 0-simplex $(W,x)$ in $\widetilde{\Psi}_d (\mathbb{R}^N)$, this amounts to choosing an open ball around $x$. For a simplex of degree $p> 0$, the open balls of all the fibers assemble into a fiber bundle.

To prove our main result, we will show that there is a chain of weak equivalences of the form 
\begin{equation} \label{main.composite}
\xymatrix{A\mathrm{Gr}_d^+(\mathbb{R}^N) \ar[r]^{(3)} & \widetilde{\Psi}_d^{\hat{\circ}}(\mathbb{R}^N) 
\ar[r]^{\hspace{-0.1cm}(2)} &
\widetilde{\Psi}_d (\mathbb{R}^N) \ar[r]^{\hspace{-0.1cm}(1)}  & \Psi_d (\mathbb{R}^N).
}
\end{equation}

Each one of these equivalences represents a step in the sketch given by Hatcher in \cite{Hat}. In (1), we choose the base-points at which we are going to perform scanning. In (2), we fix the open balls that we are going to scan. Finally, equivalence (3) is the result of the scanning process. 

Letting $\mathcal{G}_N$ denote the composite (\ref{main.composite}), we will have that the sequence $\{\mathcal{G}_N\}_N$ defines a map of spectra $\mathcal{G}$, which by the above discussion will be a weak equivalence. 

We conclude this paper by pointing out that, with minor adjustments, our methods also work for spaces of topological manifolds. Thus, after making the necessary modifications, we obtain an alternate proof for the topological analogue of Theorem \ref{thmb}, proven by A. Kupers and the author in \cite{GLK}. 

\theoremstyle{definition}  \newtheorem*{ack}{Acknowledgments}

\begin{ack}

The author would like to thank Oscar Randal-Williams for helpful discussions regarding this work and the Departments of Mathematics 
at the University of Oregon and Lafayette College for providing support and positive working environments during the development of this project. 
The author is also grateful to the anonymous referee for their valuable comments and suggestions.
\end{ack}

\subsection{Definitions from piecewise linear topology}

In this paper, we will work mainly with objects and morphisms from the \textit{piecewise linear category}. 
The main definitions that the reader needs to know are the following. 

\theoremstyle{definition} \newtheorem{polyhedron}{Definition}[section]

\begin{polyhedron} \label{polyhedron}

Let $X$ be a topological space. A \textit{piecewise linear chart} (or PL chart for short) for $X$ is a continuous embedding $h: |K| \rightarrow X$, where $|K|$ is the geometric realization of a finite simplicial complex $K$. Two piecewise linear charts 
$h_1: |K| \rightarrow X$ and $h_2: |K'| \rightarrow X$  are said to be \textit{compatible} if either $h_1(|K|)$ and $h_2(|K'|)$ are disjoint,  or there exists subdivisions of $K$ and $K'$ which triangulate $h_1^{-1}(h_1(|K|)\cap h_2(|K'|))$ and $h_2^{-1}(h_1(|K|)\cap h_2(|K'|))$ respectively and such that the composition
\[
\xymatrix{ 
h_1^{-1}(h_1(|K|)\cap h_2(|K'|)) \ar[r]^{\hspace{0.35cm}h_1} & h_1(|K|)\cap h_2(|K'|) \ar[r]^{\hspace{-0.35cm}h^{-1}_2} & h_2^{-1}(h_1(|K|)\cap h_2(|K'|))}
\]
is linear on each simplex of the subdivision of $K$ contained in $h_1^{-1}(h_1(|K|)\cap h_2(|K'|))$. 
A \textit{piecewise linear space} (or PL space for short) is a second countable Hausdorff space $X$ together with a collection $\Lambda$ of PL charts satisfying the following properties: 

\begin{itemize}

\item[(i)] Any two charts in $\Lambda$ are compatible. 

\item[(ii)] For any point $x \in X$, there is a PL chart $h: |K| \rightarrow X$ in $\Lambda$ such that $h(|K|)$ is a neighborhood of $x$. 

\item[(iii)] The collection $\Lambda$ is maximal. That is, if $h: |K| \rightarrow X$ is a PL chart which is compatible with every chart in $\Lambda$, then $h: |K| \rightarrow X$ must also belong to $\Lambda$. 

\end{itemize}

A collection of charts satisfying conditions (i), (ii), (iii) is called a \textit{PL structure for $X$}. 
\end{polyhedron}

\theoremstyle{definition} \newtheorem{subpolyhedron}[polyhedron]{Definition}

\begin{subpolyhedron} \label{subpolyhedron}

Consider a PL space $X$ with PL structure 
$\Lambda$, and let $X_0 \subseteq X$ be a subspace of $X$. We say that $X_0$ is a \textit{PL subspace of $X$} if, for any point 
$x \in X_0$, we can find a chart $h: |K| \rightarrow X$ in the PL structure $\Lambda$
whose image is contained in $X_0$ and has the property that
$\mathrm{Im}\hspace{0.05cm}h$ is a neighborhood of $x$ in $X_0$. 
Note that $X_0$ is itself a PL space. 
The PL structure on $X_0$ is the subcollection $\Lambda_0$ of $\Lambda$ consisting of all charts $h: |K| \rightarrow X$ such that 
$\mathrm{Im}\hspace{0.05cm}h \subseteq X_0$. 
\end{subpolyhedron}

\theoremstyle{definition} \newtheorem{pl.examples}[polyhedron]{Example}

\begin{pl.examples} \label{pl.examples}

If $K$ is a locally finite simplicial complex, then $|K|$ admits a canonical piecewise linear structure. Namely, we take the collection of all charts which are compatible with the inclusions $|L| \hookrightarrow |K|$, where $L$ is any finite subcomplex of $K$. 
Also, for any Euclidean space $\mathbb{R}^N$, we can define a natural piecewise linear structure 
on $\mathbb{R}^N$ by taking all charts which are compatible with 
inclusions of the form $|K| \hookrightarrow \mathbb{R}^N$, 
where $K$ is any simplicial complex of linear simplices in $\mathbb{R}^N$. 
\end{pl.examples}

\theoremstyle{definition} \newtheorem{pl.map}[polyhedron]{Definition}

\begin{pl.map} \label{pl.map}
A continuous map $f: P \rightarrow Q$ between two PL spaces $(P, \Lambda_P)$ and $(Q, \Lambda_Q)$ is a \textit{PL map} if, for any $x$ in $P$, we can find piecewise linear charts $h_0:|K| \rightarrow P$ and $h_1: |L| \rightarrow Q$ near $x$ and $f(x)$ respectively which satisfy the following: 

\begin{itemize} 

\item[(i)] $ f(\mathrm{Im}\hspace{0.05cm}h_0) \subseteq \mathrm{Im}\hspace{0.05cm}h_1$, and 
$ \mathrm{Im}\hspace{0.05cm}h_0$, $\mathrm{Im}\hspace{0.05cm}h_1$ are neighborhoods of $x$ and $f(x)$ respectively.

\item[(ii)] The composite $h_1^{-1} \circ f \circ h_0$ maps each simplex of $|K|$ linearly to a simplex of $|L|$. 
\end{itemize}
The map $f$ is a \textit{PL homeomorphism} if its both a PL map and a homeomorphism between the underlying topological spaces $P$ and $Q$. 
\end{pl.map}

\theoremstyle{definition} \newtheorem{PL.cat}[polyhedron]{Definition}

\begin{PL.cat} \label{PL.cat}
The category $\mathbf{PL}$ is the category whose objects are PL spaces and morphisms are PL maps. 
\end{PL.cat}

Using the notion of PL homeomorphism, we can formulate the following definition. 

\theoremstyle{definition} \newtheorem{pl.manifold}[polyhedron]{Definition}

\begin{pl.manifold} \label{pl.manifold}
A PL space $(M, \Lambda)$  is a \textit{d-dimensional PL manifold} if every point $x \in M$ has an open neighborhood which is PL homeomorphic to an open subspace of
the Euclidean space $\mathbb{R}^d$ (equipped with the standard PL structure defined in Example \ref{pl.examples}). Moreover, we say that $(M, \Lambda)$ is a \textit{d-dimensional PL manifold with boundary} 
if each $x\in M$ has a neighborhood which is PL homeomorphic to an open subspace of the half-space $\mathbb{R}^d_{\geq 0}$.  
\end{pl.manifold}

In Appendix \ref{appendix.pl}, we will give more detailed explanations of other notions from piecewise linear topology that we will use in this article. Specifically, in Appendix \ref{appendix.pl}, we discuss PL submersions, PL microbundles, and regular neighborhoods.  

\section{Fundamentals of spaces of PL manifolds} \label{first.section}

\subsection{Spaces of manifolds and quasi-PL spaces} In this section, we will review some of the basic constructions and results from \cite{GL} that we will use in this paper. We start by defining the \textit{space of PL manifolds} $\Psi_d(U)$, which was the main object studied in \cite{GL}. 
As we did in \cite{GL}, instead of just defining $\Psi_d(U)$ as a simplicial set, we shall define it as a \textit{PL set.} 
We introduce this notion in the next definition. 

\theoremstyle{definition} \newtheorem{PLset}{Definition}[section]

\begin{PLset} \label{PLset}
A \textit{PL set} is a functor of the form $\mathcal{F}: \mathbf{PL}^{op} \rightarrow \mathbf{Sets}$.
The function $\mathcal{F}(f): \mathcal{F}(P) \rightarrow \mathcal{F}(Q)$
induced by a PL map $f: Q \rightarrow P$ shall be denoted simply by $f^*$. 
Moreover, we say that $\mathcal{F}': \mathbf{PL}^{op} \rightarrow \mathbf{Sets}$ is a 
\textit{PL subset of} $\mathcal{F}$ if $\mathcal{F}'(P) \subseteq \mathcal{F}(P)$ 
for all PL spaces $P$ and, for any PL map
$f: Q \rightarrow P$, the induced function 
$f^*: \mathcal{F}'(P) \rightarrow \mathcal{F}'(Q)$ is the restriction of the 
function $f^*: \mathcal{F}(P) \rightarrow \mathcal{F}(Q)$ on $\mathcal{F}'(P)$.  
\end{PLset}

In \cite{GL}, we defined $\Psi_d(U)$ assuming that $U$ was an open subspace
of some $\mathbb{R}^N$. However, in this paper, we shall make this construction slightly more general. 

\theoremstyle{definition} \newtheorem{spaceman}[PLset]{Definition}

\begin{spaceman} \label{spaceman}   
Fix an $N$-dimensional PL manifold $M$ without boundary
and a non-negative integer $d \leq N$. For an open set $U$ of $M$, 
the \textit{space of d-dimensional PL manifolds in} 
$U$ is the PL set $\Psi_d(U):  \mathbf{PL}^{op} \rightarrow \mathbf{Sets}$ defined as follows: 

\begin{itemize}
\item[$\cdot$] For a PL space $P$, $\Psi_d(U)(P)$ is the set of all closed PL subspaces $W$ of 
$P\times U$ with the property that the restriction of the standard projection $P\times U \rightarrow P$ on 
$W$ is a PL submersion of codimension $d$. 

\item[$\cdot$] The function $f^*: \Psi_d(U)(P) \rightarrow \Psi_d(U)(Q)$ induced by a PL map $f: Q \rightarrow P$ sends an element $W$ 
to the \textit{pull-back} $f^*W$. That is, 
$f^*W$ is the PL subspace of  
$Q\times U$ given by $\{(\lambda,x) \hspace{0.05cm}| \hspace{0.05cm} (f(\lambda) ,x) \in W\}$.  
\end{itemize}
\end{spaceman}

 In Appendix \ref{sec.pl.sub}, we review the definition of PL submersion and collect some basic facts about this type of map.  
 
 \theoremstyle{definition} \newtheorem{locally.flat}[PLset]{Remark}

\begin{locally.flat} \label{locally.flat}
Throughout this paper, whenever we consider a PL set of the form $\Psi_d(U)$, we shall always assume that 
$d \geq 1$ and $N \geq 2d + 2$. The condition $N \geq 2d + 2$ will allow us to 
apply certain general position results in some of our arguments (e.g., Proposition \ref{germ.pathconn}).
On the other hand, the inequalities  $d \geq 1$ and $N \geq 2d + 2$ combined imply 
that $N - d \geq 3$.  By having $N - d \geq 3$, 
we can guarantee that every $d$-dimensional PL submanifold $W$ of $U$ is \textit{locally flat}. That is, for any $x$ in $W$, we can find an open PL 
embedding $h: \mathbb{R}^N \rightarrow U$ which maps the origin $\mathbf{0} \in \mathbb{R}^N$ to $x$, 
and maps the subspace $\mathbb{R}^d \subset \mathbb{R}^N$ to $W$.  
The fact that PL submanifolds are locally flat in codimension at least 3 is an immediate consequence of 
Theorem 1 of \cite{Zeeman}, which states that any proper pair of PL balls $(B^{N}, B^{d})$ with $N - d \geq 3$ 
is PL homeomorphic to the standard ball pair $(I^N, I^d)$, where $I^N$ (resp. $I^d$) is the $N$-fold 
(resp. $d$-fold) cartesian product of $[-1,1]$ and where we identify $I^d$ with the PL subspace of $I^N$
defined by setting the last $N-d$ coordinates equal to $0$. 
Furthermore, given any element $W$ of a set $\Psi_d(U)(P)$,
the condition $N - d \geq 3$ guarantees that the pair $(\pi, \pi|_W)$ consisting of the standard projection
$\pi: P\times U \rightarrow P$ and the restriction $\pi|_W: W \rightarrow P$ is a \textit{relative PL submersion}. The definition of relative PL submersion is given in Appendix \ref{sec.pl.sub} 
(see Definition \ref{pl.sub.relative}). 
\end{locally.flat}

\theoremstyle{definition} \newtheorem{rest.base}[PLset]{Remark}

\begin{rest.base} \label{rest.base}
Let $\mathcal{F}: \mathbf{PL}^{op} \rightarrow \mathbf{Sets}$ be a PL set and
$P$ a PL space. If $W$ is an element of $\mathcal{F}(P)$ and $Q$ is a PL subspace of $P$, 
we can obtain an element $W_Q$ of 
$\mathcal{F}(Q)$ by \textit{restricting} $W$ over $Q$. In other words, $W_{Q}$ is the  pull-back of $W$ 
along the obvious inclusion $Q \hookrightarrow P$. 
Throughout this paper, we shall call $W_Q$ \textit{the restriction of $W$ over $Q$}. If $\mathcal{F}$ is a PL set of the form $\Psi_d(U)$, 
the element $W_Q$ has a more concrete geometric interpretation: 
If $\pi: W \rightarrow P$ is the restriction of the standard projection $P\times U \rightarrow P$ on $W$, 
then $W_Q$ is the element of $\Psi_d(U)(Q)$ obtained by taking the fibers of
$\pi: W \rightarrow P$ over $Q$. In particular, if $Q$ is a one-point subspace $\{x\}$ of $P$, the restriction 
$W_{\{x\}}$ is the fiber of $W$ over $x$. As we did in \cite{GL}, we shall denote the fiber $W_{\{x\}}$
simply by $W_x$. 
\end{rest.base}

If $V \subseteq U$ are open sets of the manifold $M$, we can define a \textit{restriction map} $r_{U,V}: \Psi_d(U) \rightarrow \Psi_d(V)$ 
between PL sets (i.e., a natural transformation)
which sends an element $W$ of $\Psi_d(U)(P)$ to the intersection of $W$ with $P \times V$. Therefore, the assignments  
$U \mapsto \Psi_d(U)$ and $(V\subseteq U) \mapsto r_{U,V}$
define a functor $\mathcal{O}(M)^{op} \rightarrow \mathbf{PLsets}$, where $\mathcal{O}(M)$ denotes the poset of open sets of $M$
and $\mathbf{PLsets}$ is the category of PL sets.
In fact, since the property defining a PL submersion is local, 
we can easily verify the following result, which we state without proof.   

\theoremstyle{plain} \newtheorem{space.sheaf}[PLset]{Proposition}

\begin{space.sheaf} \label{space.sheaf}
The correspondences $U \mapsto \Psi_d(U)$ and $(V\subseteq U) \mapsto r_{U,V}$ define a sheaf of PL sets on $M$. 
That is, for any open set $U$ of $M$ and any open cover $\{ U_i \}_i$ of $U$, we have an equalizer diagram of PL sets
of the form 
\[
\xymatrix{ \Psi_d(U) \ar[r] & \prod_i \Psi_d(U_i) \ar@<0.7ex>[r] \ar@<-0.7ex>[r]  & \prod_{i,j} \Psi_d(U_i \cap U_j).
}
\]
\end{space.sheaf}

As explained in \cite{GL}, any PL set of the form $\Psi_d(U)$ is in fact a
\textit{quasi-PL space.} We will review this notion in the next definition. 

\theoremstyle{definition} \newtheorem{quasiPL}[PLset]{Definition}

\begin{quasiPL} \label{quasiPL}
A \textit{quasi-PL space} is a PL set $\mathcal{F}: \mathbf{PL}^{op} \rightarrow \mathbf{Sets}$ which satisfies the following gluing condition: Given any PL space $P$ and any locally finite collection of closed PL subspaces $\{ Q_i \}_i$ which covers $P$, the diagram of restriction maps
\begin{equation} \label{gluing.condition}
\xymatrix{ \mathcal{F}(P) \ar[r] & \prod_i  \mathcal{F}(Q_i)  \ar@<0.7ex>[r] \ar@<-0.7ex>[r]  & 
\prod_{i,j}  \mathcal{F}(Q_i \cap Q_j)
}
\end{equation}
is an equalizer diagram of sets.  
\end{quasiPL}

Definition \ref{quasiPL} should be compared with that of a \textit{quasi-topological space} used by Gromov in \cite{Gr}. In Definition \ref{quasiPL}, 
we restrict to the category $\mathbf{PL}$, and the gluing condition is required to hold for arbitrary locally finite families of closed PL subspaces. 
On the other hand, for a quasi-topological space, the corresponding gluing condition is required only for 
finite collections of closed subsets. Moreover, we point out that any quasi-PL space $\mathcal{F}$ 
is automatically a sheaf on PL spaces, i.e., the gluing condition described in (\ref{gluing.condition}) implies the
analogous gluing condition for arbitrary open covers. This fact can be proven using
standard triangulation techniques from PL topology and Lemma 2.1.6 from \cite{Jo}.    

Next, we shall review how an arbitrary PL set $\mathcal{F}$ induces naturally a simplicial set $\mathcal{F}_{\bullet}$. 
Before reviewing this construction, we need to introduce some conventions. 

\theoremstyle{definition} \newtheorem{first.conv}[PLset]{Convention}

\begin{first.conv} \label{first.conv}
For any non-negative integer $p$, we will denote by $[p]$ the set $\{0, 1, \ldots, p \}$. As it is normally done in the literature, we will denote by $\Delta$ the category whose objects are the sets $[p]$ and morphisms are non-decreasing functions. In this paper, the \textit{standard geometric p-simplex} $\Delta^p$ will be the convex hull of the vectors of the standard basis in $\mathbb{R}^{p+1}$. Note that any morphism $\eta: [p] \rightarrow [q]$ in the category $\Delta$ induces a canonical linear map $\Delta^p \rightarrow \Delta^q$ between geometric simplices. We will denote this linear map by $\widetilde{\eta}$. Also, notice that the correspondences 
$[p] \mapsto \Delta^p$ and $\eta \mapsto \widetilde{\eta}$ assemble into an inclusion functor of the form 
$\mathcal{I}: \Delta \hookrightarrow \mathbf{PL}$.
Thus, we can regard $\Delta$ as a (non-full) subcategory of $\mathbf{PL}$.   
\end{first.conv}

\theoremstyle{definition} \newtheorem{underlying}[PLset]{Definition}

\begin{underlying} \label{underlying}

For any PL set $\mathcal{F}: \mathbf{PL}^{op} \rightarrow \mathbf{Sets}$,  
we define \textit{the underlying simplicial set of} $\mathcal{F}$ to be 
the simplicial set $\mathcal{F}_{\bullet}$ obtained by taking the composition 
\[
\xymatrix{ \Delta^{op} \hspace{0.1cm} \ar@{^{(}->}[r]^{\mathcal{I}^{op}} &  \mathbf{PL}^{op} \ar[r]^{\mathcal{F}} &  \mathbf{Sets},
}
\]
where $\mathcal{I}: \Delta \hookrightarrow \mathbf{PL}$ is the functor defined in Convention \ref{first.conv}. 
\end{underlying}

\theoremstyle{definition} \newtheorem{second.conv}[PLset]{Convention}

\begin{second.conv} \label{second.conv}
Consider a PL set of the form $\Psi_d(U)$ and let $\Psi_d(U)_{\bullet}$
be its underlying simplicial set. According to the previous definition, the structure map
of $\Psi_d(U)_{\bullet}$ corresponding to a morphism $\eta \in \Delta([q],[p])$ is the function
$\widetilde{\eta}^*: \Psi_d(U)_p \rightarrow \Psi_d(U)_q$ which sends an element $W \in \Psi_d(U)_p = \Psi_d(U)(\Delta^p)$
to its pull-back $\widetilde{\eta}^*W$ along the canonical linear map $\widetilde{\eta}^*: \Delta^q \rightarrow \Delta^p$
induced by $\eta: [q] \rightarrow [p]$.
From now on, we shall denote the structure map
$\widetilde{\eta}^*: \Psi_d(U)_p \rightarrow \Psi_d(U)_q$ simply by $\eta^*$, and we shall denote any
pull-back of the form $\widetilde{\eta}^*W$ by $\eta^*W$.  

\end{second.conv}

\theoremstyle{definition} \newtheorem{pointed}[PLset]{Remark}

\begin{pointed} \label{pointed}
 We point out that $\Psi_d(U)_{\bullet}$ is a \textit{pointed} simplicial set, 
 where the base-point is the subsimplicial set of all simplices $W$ such that $W= \varnothing$. 
 Equivalently, the base-point consists of all degeneracies of the \textit{empty} $0$-simplex.
\end{pointed}

Consider a PL set $\mathcal{F}$ and let $\mathcal{F}_{\bullet}$ be its underlying simplicial set. 
If $\mathcal{F}$ happens to be a quasi-PL space, the simplicial set $\mathcal{F}_{\bullet}$ has the following property. 

\theoremstyle{plain} \newtheorem{quasi.kan}[PLset]{Proposition}

\begin{quasi.kan} \label{quasi.kan}
The underlying simplicial set $\mathcal{F}_{\bullet}$ of a quasi-PL space $\mathcal{F}$ is Kan. 
\end{quasi.kan}

See
Theorem 2.13 in \cite{GL} for a proof of this proposition. 
As mentioned before, the PL sets $\Psi_d(U)$ are examples of quasi-PL spaces. 

\theoremstyle{plain} \newtheorem{psi.quasi}[PLset]{Proposition}

\begin{psi.quasi} \label{psi.quasi}
Fix two non-negative integers $d \leq N$. For any open set $U \subseteq M$ in an $N$-dimensional PL manifold $M$ without boundary, the functor $\Psi_d(U): \mathbf{PL}^{op} \rightarrow \mathbf{Sets}$ is a quasi-PL space.  In particular, the underlying simplicial set $\Psi_d(U)_{\bullet}$ is Kan.  
\end{psi.quasi} 

See Theorem 2.15 in \cite{GL} for a proof of this result. 
We point out that, in \cite{GL}, we proved  that $\Psi_d(U)$ 
is a quasi-PL space
in the special case when 
$U$ is an open subset of 
$\mathbb{R}^N$. However, the proof given in \cite{GL} follows through without 
much difficulty if we assume instead that $U$ is  
contained in an arbitrary PL manifold $M$ without boundary.  

One reason why PL sets (and, in particular, quasi-PL spaces) are useful is that they allow one to solve homotopy-theoretical questions geometrically. To make this assertion more precise, we need to recall the following notion introduced in \cite{GL}.

\theoremstyle{definition} \newtheorem{concordance}[PLset]{Definition}

\begin{concordance} \label{concordance} 
Consider a PL set $\mathcal{F}$ and let $P$ be an arbitrary PL space. 
Two elements $W_0$ and $W_1$ of the set $\mathcal{F}(P)$ are said to be \textit{concordant} if there exists a $\widehat{W}$ in $\mathcal{F}([0,1]\times P)$ such that, for $j =0$ and $j=1$, the element $W_j$ is equal to the pull-back of $\widehat{W}$ along the canonical inclusion 
$i_j: P\hookrightarrow [0,1]\times P$ defined by  $i_j(x) = (j, x)$. 
In this case, we say that $\widehat{W}$ is a \textit{concordance} from $W_0$ to $W_1$. 
\end{concordance}

\theoremstyle{definition} \newtheorem{remark.concordance1}[PLset]{Remark}

\begin{remark.concordance1} \label{remark.concordance1}
Fix a PL set $\mathcal{F}$, and let $W_0$ and $W_1$ be two elements of $\mathcal{F}(\Delta^p)$. In other words, $W_0$ and $W_1$ are $p$-simplices of the underlying simplicial set $\mathcal{F}_{\bullet}$. For $j=0$ and $j=1$, let $f_j: \Delta^p_{\bullet} \rightarrow \mathcal{F}_{\bullet}$ be the classifying map of $W_j$. In Proposition 2.21 of \cite{GL}, 
we described a procedure which takes as input a concordance from $W_0$ to $W_1$  and produces a homotopy between the geometric realizations $|f_0|$ and $|f_1|$.  
Therefore, if $W_0$ and $W_1$ are $p$-simplices of $\Psi_d(U)_{\bullet}$ with classifying maps $f_0: \Delta^p_{\bullet} \rightarrow \mathcal{F}_{\bullet}$
and $f_1: \Delta^p_{\bullet} \rightarrow \mathcal{F}_{\bullet}$ respectively, 
we can produce a homotopy from $|f_0|$ to $|f_1|$ by simply constructing a concordance $\widehat{W} \in \Psi_d(U)( [0,1]\times \Delta^p)$
from $W_0$ to $W_1$. We will exploit this observation in many of the arguments that we will elaborate in this paper. 
 \end{remark.concordance1}
 
Fix a PL set $\mathcal{F}: \mathbf{PL}^{op} \rightarrow \mathbf{Sets}$ and
let $\mathcal{F}'$ be a
PL subset of $\mathcal{F}$. Evidently, the underlying simplicial set 
$\mathcal{F}'_{\bullet}$ is a subsimplicial set of $\mathcal{F}_{\bullet}$. 
Using the subdivision techniques from \cite{GL}, we can prove the following.  

\theoremstyle{plain} \newtheorem{subdivision.cover}[PLset]{Proposition}

\begin{subdivision.cover} \label{subdivision.cover}

Let $\mathcal{F}: \mathbf{PL}^{op} \rightarrow \mathbf{Sets}$ be a PL set
and suppose that 
$\mathcal{F}'$ is a PL subset of $\mathcal{F}$ with the property that, 
for any PL space $P$ and any element $W\in\mathcal{F}(P)$, there exists an open cover $\mathcal{U}$ of $P$ such that, 
for each open set $U \in \mathcal{U}$,
 the restriction $W_U$ is an element of $\mathcal{F}'(U)$.   
Then, under these assumptions, 
the standard inclusion $\mathcal{F}'_{\bullet} \hookrightarrow \mathcal{F}_{\bullet}$ is a weak homotopy equivalence. 
\end{subdivision.cover} 

The proof of this proposition will be postponed until Appendix \ref{subdivision.gl}, 
where we will also review briefly the subdivision methods from \cite{GL}. 

\subsection{The spectrum of piecewise linear manifolds} \label{spectrum.sec}
 In this paper, we will mainly focus on quasi-PL spaces of the form $\Psi_d(\mathbb{R}^N)$; i.e., we will work with spaces of PL manifolds inside $\mathbb{R}^N$. 

In \cite{GL}, we explained how to assemble the sequence of spaces $\{ |\Psi_d(\mathbb{R}^N)_{\bullet}| \}_N$
into a spectrum $\Psi_d$. In this paper, we will use an alternative model for this spectrum $\Psi_d$,
one where the levels of  $\Psi_d$ are given by simplicial sets rather than topological spaces.
Before we explain how to obtain this alternative definition of $\Psi_d$, we need to introduce the following simplicial set: 
Let $\mathbb{R}\cup \{ \infty\}$ be the one-point compactification of the real line $\mathbb{R}$ 
with its usual topology. 
From now on, we will denote by $S^1$ the subsimplicial set of $\mathrm{Sing}(\mathbb{R}\cup \{ \infty\})$ 
consisting of those
continuous functions $f: \Delta^p \rightarrow \mathbb{R}\cup \{ \infty\}$ whose restriction on the complement $\Delta^p - f^{-1}(\infty)$ is piecewise linear. 
Note that $S^1$ is a pointed simplicial set; the base-point is defined by all constant functions $\Delta^p \rightarrow \{\infty\}$.  Also, recall that the natural base-point of $\Psi_d(\mathbb{R}^N)_{\bullet}$ is the subsimplicial set consisting of all degeneracies of the empty $0$-simplex $\varnothing$. As we did in \cite{GL}, we shall denote this subsimplicial set by $\varnothing_{\bullet}$.   

Let $\iota:  \Psi_d(\mathbb{R}^N)_{\bullet} \hookrightarrow \Psi_d(\mathbb{R}^{N+1})_{\bullet}$ be the inclusion of simplicial sets induced by the standard inclusion $\mathbb{R}^N \hookrightarrow \mathbb{R}^{N+1}$. 
In this discussion, we shall denote the image of a $p$-simplex $W$ of $\Psi_d(\mathbb{R}^N)_{\bullet}$ under $\iota$ 
simply by $W$. 
Now, given an arbitrary $p$-simplex $(f,W)$ of the product $S^1\times \Psi_d(\mathbb{R}^N)_{\bullet}$, we can produce a $p$-simplex in $\Psi_d(\mathbb{R}^{N+1})_{\bullet}$ by pushing the fibers of $W$ towards infinity along the $N+1$-direction using the function $f$. More rigorously, we can do the following: 

\begin{itemize}
\item[$\cdot$] Let $U$ denote the complement $\Delta^p - f^{-1}(\infty)$, and let $W_U$ be the restriction
of $W$ over $U$.  

\item[$\cdot$] Let $W + f$ be the image of $W_U$ under the PL embedding 
$U\times\mathbb{R}^N \rightarrow U\times \mathbb{R}^{N+1}$ defined by 
$(\lambda, x_1, \ldots, x_N) \mapsto (\lambda, x_1, \ldots, x_N, f(\lambda))$.  
\end{itemize}

Clearly, $W+f$ is an element of the set $\Psi_d(\mathbb{R}^{N+1})(U)$. However, since
 $f: \Delta^p \rightarrow \mathbb{R}\cup \{ \infty\}$ is a continuous map, $W + f$ is actually closed as a PL subspace of 
 $\Delta^p \times \mathbb{R}^{N+1}$. Therefore, $W+f$ is a $p$-simplex of $\Psi_d(\mathbb{R}^{N+1})_{\bullet}$ 
 which has empty fibers over $f^{-1}(\infty)$. 

Note that if $(W,f)$ is a $p$-simplex satisfying $W = \varnothing$ or $f(\Delta^p)=\{\infty\}$, then $W+f$ will be the empty $p$-simplex in $\Psi_d(\mathbb{R}^{N+1})_{\bullet}$. Thus, for any positive integer $N$, the map of simplicial sets 
\vspace{0.15cm}
\begin{equation} \label{prespectrum}
S^1\times \Psi_d(\mathbb{R}^N)_{\bullet} \rightarrow \Psi_d(\mathbb{R}^{N+1})_{\bullet} \qquad (W,f) \mapsto W +f
\end{equation}
factors through the smash product $S^1 \wedge \Psi_d(\mathbb{R}^N)_{\bullet}$. 
With this observation, we can now define the spectrum $\Psi_d$. 

\theoremstyle{definition} \newtheorem{spectrum.man}[PLset]{Definition}

\begin{spectrum.man} \label{spectrum.man}
\textit{The spectrum of piecewise linear manifolds} $\Psi_d$ is the spectrum whose $N$-th level is $\Psi_d(\mathbb{R}^N)_{\bullet}$ and whose structure maps $S^1 \wedge \Psi_d(\mathbb{R}^N)_{\bullet} \rightarrow \Psi_d(\mathbb{R}^{N+1})_{\bullet}$ are the morphisms of simplicial sets induced by (\ref{prespectrum}).  
\end{spectrum.man}

\section{Spaces of manifolds with marking functions} \label{section.marking}

In this section,  we will introduce for any PL space $P$ a variant of the set $\Psi_d(\mathbb{R}^N)(P)$ where fibers come equipped with base-points. The main ingredient for this new construction is the following definition.  

\theoremstyle{definition} \newtheorem{marking}{Definition}[section]

\begin{marking} \label{marking} Let $P$ be a PL space and let $W$ be an element of $\Psi_d(\mathbb{R}^N)(P)$. A continuous map $f:P \rightarrow \mathbb{R}^N \cup\{\infty\}$ is said to be a \textit{marking function for W} if it satisfies the following conditions: 
\begin{itemize}
\item[(i)] For any $x\in P$, $f(x) = \infty$ if and only if $W_x = \varnothing$.  

\item[(ii)] If $W_x \neq \varnothing$, then $f(x)$ must be a point in the fiber $W_x$. 

\item[(iii)] If the fiber $W_x$ contains the origin $\mathbf{0}\in \mathbb{R}^N$, then $f(x)=\mathbf{0}$. 

\item[(iv)] The restriction of $f$ on the pre-image $f^{-1}(\mathbb{R}^N)$ is a piecewise linear map. 
\end{itemize}
 \end{marking}

\theoremstyle{definition} \newtheorem{remark.marking}[marking]{Remark}

\begin{remark.marking} \label{remark.marking} 
For any marking function $f: P \rightarrow \mathbb{R}^N \cup\{\infty\}$ 
of an element $W \in \Psi_d(\mathbb{R}^N)(P)$, 
we will often denote the pre-image  $f^{-1}(\mathbb{R}^N)$ by $V_f$.  
Note that if $\pi: W \rightarrow P$ is the restriction of the standard projection 
$P\times \mathbb{R}^N \rightarrow P$ on $W$, then
$V_f \subseteq P$ is equal to the image $\pi(W)$.
Now, let $\tilde{f}: V_f \hookrightarrow W$ 
be the piecewise linear inclusion defined by $\tilde{f}(x) = (x,f(x))$. 
Evidently, $\tilde{f}$ maps $V_f$ onto the graph of $f|_{V_f}$, 
which is a closed PL subspace of $W$.  Using the fact that 
$\pi: W \rightarrow P$ is a PL submersion of codimension $d$, 
one can easily verify that the diagram
\[
\xymatrix{ V_f  \hspace{0.1cm} \ar@{^{(}->}[r]^{\tilde{f}} & W \ar[r]^{\pi} & V_f 
}
\]
is a $d$-dimensional PL microbundle (we will review the definition of microbundle
in Appendix \ref{sec.pl.micro} ). More generally, since the map of pairs
\[
(\mathrm{pr}_1, \pi): (P\times \mathbb{R}^N, W) \longrightarrow (P,P) 
\]
is a relative 
PL submersion of codimensions $N$ and $d$ (see Definition \ref{pl.sub.relative}), 
one can also verify that the diagram
\[
\xymatrix{ V_f  \hspace{0.1cm} \ar@{^{(}->}[r]^{\hspace{-0.8cm}\tilde{f}} & 
(V_f\times\mathbb{R}^N,W) \ar[r]^{\hspace{0.8cm}(\mathrm{pr}_1,\pi)} & V_f 
}
\]
is a  PL $(N,d)$-microbundle pair over $V_f$ (the definition of microbundle pair will also
be discussed in Appendix \ref{sec.pl.micro}).   
\end{remark.marking}

The following is the main definition of this section.  

\theoremstyle{definition} \newtheorem{space.marking}[marking]{Definition}

\begin{space.marking} \label{space.marking} 
Let $P$ be a PL space. We define $\widetilde{\Psi}_d(\mathbb{R}^N)(P)$ to be the set of all tuples $(W,f)$, where $W$ is an element of $\Psi_d(\mathbb{R}^N)(P)$ and $f: P \rightarrow \mathbb{R}^N\cup \{ \infty \}$ is a marking function for $W$.  
\end{space.marking} 

For a PL map 
$g:Q \rightarrow P$, let $\widetilde{g^*}: \widetilde{\Psi}_d(\mathbb{R}^N)(P) \rightarrow   \widetilde{\Psi}_d(\mathbb{R}^N)(Q) $ be the function defined by $\widetilde{g^*}\big((W,f)\big) = (g^*W, f\circ g)$. One can easily check that the correspondences $P \mapsto \widetilde{\Psi}_d(\mathbb{R}^N)(P)$ and $g \mapsto \widetilde{g^*}$ define a contravariant functor $\widetilde{\Psi}_d(\mathbb{R}^N): \mathbf{PL}^{op} \rightarrow \mathbf{Sets}$.  In fact, we have the following proposition.  

\theoremstyle{plain} \newtheorem{marking.quasi}[marking]{Proposition}

\begin{marking.quasi} \label{marking.quasi} 
The functor $\widetilde{\Psi}_d(\mathbb{R}^N)$ is a quasi-PL space. In particular, the underlying simplicial set 
$\widetilde{\Psi}_d(\mathbb{R}^N)_{\bullet}$ is Kan.  
\end{marking.quasi}

\begin{proof} 
We need to prove that $\widetilde{\Psi}_d(\mathbb{R}^N)$ satisfies the 
gluing condition stated in Definition \ref{quasiPL}. 
Let then $P$ be a PL space and let $\{ P_i \}_{i\in\Lambda}$ be a locally finite collection 
of closed PL subspaces covering $P$. 
For each $i\in \Lambda$, let $(W_i, f_i)$ be an element of $\widetilde{\Psi}_d(\mathbb{R}^N)(P_i)$. Moreover, 
suppose that for any two indices $i$ and $j$ the pairs 
$(W_i, f_i)$ and $(W_j,f_j)$ agree over the intersection $P_i\cap P_j$. 
Since $\Psi_d(\mathbb{R}^N)$ is a quasi-PL space, there is a unique element $W$ of $\Psi_d(\mathbb{R}^N)(P)$ 
with the property that $W_{P_i} = W_i$ for all $i\in \Lambda$ (i.e., the restriction of $W$ over $P_i$ agrees with $W_i$). 
Also, by gluing all the functions $f_i$, we obtain a function 
$f:P \rightarrow \mathbb{R}^N\cup \{\infty\}$ which agrees with $f_i$ when restricted to $P_i$. It is evident that the function $f$ satisfies properties (i), (ii), (iii) from Definition \ref{marking}, 
so we only need to check that $f$ is piecewise linear on $f^{-1}(\mathbb{R}^N)$. 
To verify this, we shall assume that $P$ is the geometric realization $|K|$ of a locally finite simplicial complex $K$
(it is possible to make this assumption because any PL space admits a \textit{triangulation}.
See Chapter \S 3 of \cite{plhud} for a discussion of triangulations of PL spaces).  
Now, pick any point $x$ in $f^{-1}(\mathbb{R}^N) \subseteq |K|$. After subdividing $K$ if necessary, we can assume that the point $x$ is a vertex of $K$ and that all the simplices of the star $\mathrm{st}(x,K)$ (i.e., the union of all simplices of $K$ which have $x$ as a vertex) are contained in $f^{-1}(\mathbb{R}^N)$. 
Also, by subdividing $K$ even further, 
we can assume that each simplex of $\mathrm{st}(x,K)$ is contained in some PL subspace $P_i$. All of these assumptions guarantee that $f$ is piecewise linear on each simplex of $\mathrm{st}(x,K)$.
Therefore, $f$ is piecewise linear on 
$\mathrm{st}(x,K)$, which is a neighborhood of $x$ contained in $f^{-1}(\mathbb{R}^N)$.
Thus, since $x$ was arbitrary, we have shown that $f$ is locally piecewise linear at any point of $f^{-1}(\mathbb{R}^N)$.  
Consequently, $f$ is piecewise linear on all of $f^{-1}(\mathbb{R}^N)$, and
it follows that the pair $(W,f)$ is an element of $\widetilde{\Psi}_d(\mathbb{R}^N)(P)$.  

The previous argument shows that the diagram of restriction maps
\begin{equation} \label{gluing.condition.marking}
\xymatrix{ \widetilde{\Psi}_d(\mathbb{R}^N)(P) \ar[r] & \prod_i \widetilde{\Psi}_d(\mathbb{R}^N)(P_i) 
\ar@<0.7ex>[r] \ar@<-0.7ex>[r]  & 
\prod_{i,j} \widetilde{\Psi}_d(\mathbb{R}^N)(P_i \cap P_j)
}
\end{equation}
is an equalizer diagram of sets. In other words, the functor 
$\widetilde{\Psi}_d(\mathbb{R}^N): \mathbf{PL}^{op} \rightarrow \mathbf{Sets}$ 
satisfies the gluing condition stated in Definition \ref{quasiPL}.  
\end{proof}

Notice that a 0-simplex of $\widetilde{\Psi}_d(\mathbb{R}^N)_{\bullet}$  
is a $d$-dimensional piecewise linear submanifold $W$ of $\mathbb{R}^N$, 
which is closed as a subspace,  
with a choice of base-point.   The base-point has to be equal to the origin $\mathbf{0}$ of 
$\mathbb{R}^N$ if $\mathbf{0} \in W$. From now on, any 0-simplex of  $\widetilde{\Psi}_d(\mathbb{R}^N)_{\bullet}$  
with underlying manifold $W$ shall be denoted by $(W,x)$, 
where the second component indicates the base-point. If $W= \varnothing$, 
then we must have $x= \infty$.  
 
\subsection{Constructing concordances via scanning}   \label{concordances.via.scanning}

In this section, we will explain how 
to use scanning to construct concordances between elements of $\widetilde{\Psi}_d(\mathbb{R}^N)(P)$,
assuming that $P$ is a compact PL space.  
The following definition will play a central role in our constructions.    

\theoremstyle{definition} \newtheorem{elem.scan}[marking]{Definition}

\begin{elem.scan} \label{elem.scan} 
 
Fix a value $\epsilon>0$. A map $F:[0,1)\times \mathbb{R}^N \rightarrow \mathbb{R}^N$ is an \textit{elementary $\epsilon$-scanning map}  if it satisfies the following:  

\begin{itemize}

\item[(i)] $F$ is piecewise linear.

\item[(ii)] For all $t$ in $[0,1)$, $F_t$ is a piecewise linear homeomorphism from $\mathbb{R}^N$ to itself
that preserves the origin $\mathbf{0} \in \mathbb{R}^N$, i.e., 
$F_t(\mathbf{0}) = \mathbf{0}$ for all $t$ in $[0,1).$

\item[(iii)] $F_0$ is equal to the identity map $\mathrm{Id}_{\mathbb{R}^N}$.    

\item[(iv)] Given any value $r>0$, there is a $t_0$ in $[0,1)$ such that 
$[-r,r]^N \subset F_t\big( (-\epsilon,\epsilon)^N \big)$ for all $t \geq t_0$. 

\end{itemize}
\end{elem.scan}

One can construct elementary $\epsilon$-scanning maps via an iterated application of 
the Isotopy Extension Theorem (see Chapter 4 of \cite{RS}).   For many of the arguments that we will do in this section, 
it will be necessary to modify elements $(W,f)$ of a set 
$\widetilde{\Psi}_d(\mathbb{R}^N)(P)$ by pushing some fibers of $W$
to infinity while keeping other fibers fixed. 
This will be accomplished by applying the following construction 
as well as Lemma \ref{scan.lemma}  below.  

\theoremstyle{definition} \newtheorem{control.scan}[marking]{Definition}

\begin{control.scan} \label{control.scan}
Let  $F:[0,1)\times \mathbb{R}^N \rightarrow \mathbb{R}^N$ be an elementary $\epsilon$-scanning map and
$h: P \rightarrow [0,1]$ a piecewise linear function. Also, let $U_h$ denote the pre-image 
$h^{-1}([0,1))$. 
The piecewise linear homeomorphism 
$\widehat{F}: U_h\times\mathbb{R}^N  \rightarrow U_h\times\mathbb{R}^N$ 
defined by the formula
\[
\widehat{F}(x,y) = (x, F_{h(x)}(y))
\]
will be called the \textit{$\epsilon$-scanning map induced by $F$ and $h$}. 
\end{control.scan}

It is a routine exercise to check that the map $\widehat{F}$  is actually a PL homeomorphism. For the statement of Lemma \ref{scan.lemma}, we will need to use marking functions which are not necessarily globally defined.  

\theoremstyle{definition} \newtheorem{partial.marking}[marking]{Definition}

\begin{partial.marking} \label{partial.marking}
Let $W$ be an element of $\Psi_d(\mathbb{R}^N)(P)$. A function $f: V \rightarrow \mathbb{R}^N\cup\{\infty\}$ is a \textit{partially-defined marking function for $W$} if: 
\begin{itemize}

\item[(i)] The domain $V$ is a non-empty open subset of $P$, and

\item[(ii)] $f$ is a marking function for the element $W_V \in \Psi_d(\mathbb{R}^N)(V)$ obtained
by restricting $W$ over $V$.
\end{itemize}
\end{partial.marking} 

\theoremstyle{plain} \newtheorem{scan.lemma}[marking]{Lemma}

\begin{scan.lemma} \label{scan.lemma}
Fix a compact PL space $P$, an element $W$ of $\Psi_d(\mathbb{R}^N)(P)$, a value $\epsilon > 0$, an elementary 
$\epsilon$-scanning map $F:[0,1)\times \mathbb{R}^N \rightarrow \mathbb{R}^N$, and  
a piecewise linear function $h: P \rightarrow [0,1]$.  
Moreover, let us denote 
the pre-image $h^{-1}([0,1))$ by $U_h$.  
If for every $x$ in $h^{-1}(1)$ we have that the fiber $W_x$ does not intersect the cube $[-\epsilon,\epsilon]^N$, 
then there exists an element  $W^{F,h}$ of $\Psi_d(\mathbb{R}^N)(P)$ 
which satisfies the following: 

\begin{itemize}

\item[(i)] For any $x$ in $U_h$, the fiber $W^{F,h}_x$ is equal to $F_{h(x)}(W_x)$. 

\item[(ii)] $W^{F,h}_x = \varnothing$ for all $x$ in $P-U_h$.   

\end{itemize}

Furthermore, if $f: V \rightarrow \mathbb{R}^N\cup \{\infty\}$ is a partially-defined 
marking function for $W$ and the domain $V$ contains $U_h$, 
then the function $f^{F,h}: P \rightarrow \mathbb{R}^N\cup \{\infty\}$ defined by 
\begin{equation}  \label{new.marking.function}
f^{F,h}(x)   = \left\{
    \begin{array}{rl}
      F_{h(x)}(f(x)) & \textrm{if } x\in U_h\cap f^{-1}(\mathbb{R}^N) \\
       \infty & \textrm{if } x \in (P - U_h)\cup f^{-1}(\infty)
    \end{array} \right. 
\end{equation}
is a marking function for $W^{F,h}$. 
\end{scan.lemma}

Before jumping into the proof of Lemma \ref{scan.lemma}, 
let us say a few words about our strategy
for proving this result.
 Roughly speaking, we will construct the element 
  $W^{F,h}$ of $\Psi_d(\mathbb{R}^N)(P)$ by enlarging the cube 
  $[-\epsilon, \epsilon]^N$ at each point $x \in P$. In other words, 
  $W^{F,h}$ will be obtained by performing scanning 
  (via the elementary $\epsilon$-scanning map $F$) at each point
  $x \in P$.  
  The function $h:P \rightarrow [0,1]$  indicates how 
  much scanning we will do at each 
  point of $P$. If $h(x) = 0$, then we do not do any scanning at $x$. 
  In this case, $W$ and $W^{F,h}$ 
  will have the same fiber over $x$.  
  If $0 < h(x) < 1$, then the elementary $\epsilon$-scanning 
  map $F:[0,1) \times \mathbb{R}^N \rightarrow \mathbb{R^N}$ 
  will scan any part of the fiber $W_x$ lying 
  outside of $[-\epsilon, \epsilon]^N$ away from the origin. The closer $h(x)$ 
  is to 1, the stronger the scanning. 
  Finally, if $h(x)=1$, the 
  fiber $W_x$ is pushed all the way to infinity, making the new 
  fiber $W^{F,h}_x$ empty. 
 If $f:V \rightarrow \mathbb{R}^N\cup \{\infty\}$ is a partially-defined 
 marking function for $W$ so that $U_h \subseteq V$, then the statement of 
 Lemma \ref{scan.lemma} also says that we can scan 
 the images of $f$ along with the fibers of $W$
 to produce a marking function 
 $f^{F,h}: P \rightarrow \mathbb{R}^N\cup \{\infty\}$
 for $W^{F,h}$ (this is the function defined in (\ref{new.marking.function})). 
 An interesting feature of the scanning process 
 described in Lemma \ref{scan.lemma} is that, 
 even though the initial marking function 
 $f:V \rightarrow \mathbb{R}^N\cup \{\infty\}$ might
 only be partially defined, the new marking function
  $f^{F,h}: P \rightarrow \mathbb{R}^N\cup \{\infty\}$ 
  will be \textit{globally} defined on all of $P$.  
  As a final comment, we point out that any non-empty fiber 
  $W_x$ over a point $x \in U_h$ will remain 
  non-empty throughout the scanning argument 
  described above. This is the reason why we 
  require $U_h \subseteq V$;
  with this assumption, we guarantee that any
  non-empty fiber $W^{F,h}_x$ will have a base-point
  of the form $f^{F,h}(x)$ by the end of the scanning 
  process. On the other hand,  
  since all the fibers of $W^{F,h}$ over $P - U_h$ 
  will be empty, the value $f^{F,h}(x)$ 
  for any point $x \notin U_h$ will simply be $\infty$.

 \begin{proof}[Proof of Lemma \ref{scan.lemma}]
Let $\widehat{F}: U_h\times\mathbb{R}^N  \rightarrow U_h\times\mathbb{R}^N$  
be the $\epsilon$-scanning map induced by the maps $F$ and $h$ 
(in the sense of Definition \ref{control.scan}),  
and let $W_{U_h}$ be the restriction of $W$ over $U_h$. 
We construct the element $W^{F,h}$ as follows: 
First, take the image of $W_{U_h}$ under the $\epsilon$-scanning map 
$\widehat{F}: U_h\times\mathbb{R}^N  \rightarrow U_h\times\mathbb{R}^N$, 
and then include this image in $P\times\mathbb{R}^N$ via the obvious inclusion 
$U_h\times \mathbb{R}^N \hookrightarrow P\times \mathbb{R}^N$. 
To show that $W^{F,h}$ is indeed an element of $\Psi_d(\mathbb{R}^N)(P)$,
we must verify two things:

\begin{enumerate}
\item $W^{F,h}$ is a closed PL subspace of $P\times \mathbb{R}^N$.  
\item The restriction of the standard projection 
$P \times \mathbb{R}^N \rightarrow P$ 
on $W^{F,h}$ is a PL submersion of codimension $d$.   
\end{enumerate}

Since $\widehat{F}$ is a PL homeomorphism, it follows that the image $\widehat{F}(W_{U_h})$ is a PL subspace of $U_h\times \mathbb{R}^N$, which in turn is a PL subspace of $P\times \mathbb{R}^N$. Therefore, $W^{F,h}$ is a PL subspace of $P\times \mathbb{R}^N$.
To establish that $W^{F,h}$ is a closed subspace of $P\times \mathbb{R}^N$, we will prove
the following claim:

\begin{itemize}

\item[ ] \textbf{Claim A.} \textit{For any point $(x_0, y_0) \in P\times \mathbb{R}^N - W^{F,h}$, there exists 
an open neighborhood $V_{x_0}$ of $x_0$ in $P$ and a value $R> 0$ such that 
$V_{x_0} \times B(y_0, R)$ is disjoint from  $W^{F,h}$.}

\end{itemize}

In the previous statement, $B(y_0, R)$ denotes the open cube $\prod_{j=1}^N(y_0^j - R, y_0^j + R)$, where
$y_0^j$ is the $j$-th coordinate of the point $y_0$.  We will break down the proof 
of this claim into two cases:  

\textit{\underline{Case A.1:} The point $x_0$ is in $U_h$.}  
Recall that $W^{F,h}$ is equal to the image 
 $\widehat{F}(W_{U_h})$, where $W_{U_h}$ is the restriction of $W$
 over $U_h$ and $\widehat{F}: U_h\times\mathbb{R}^N  \rightarrow U_h\times\mathbb{R}^N$  
is the $\epsilon$-scanning map induced by $F$ and $h$. 
Since $W_{U_h}$ is closed in $U_h \times \mathbb{R}^N$ and
$\widehat{F}$ is a PL homeomorphism,  it follows
that $W^{F,h}$ is closed in $U_h \times \mathbb{R}^N$. 
Consequently, we can find an open neighborhood $V_{x_0}$
of $x_0$ in $U_h$ and a value $R>0$ such that
$\big(\hspace{0.02cm}V_{x_0}\times B(y_0,R)\hspace{0.02cm} \big)  \cap W^{F,h} = \varnothing$. 
 
 \textit{\underline{Case A.2:} The point $x_0$ is in $P-U_h$.}  
Recall that $U_h$ is equal to the pre-image $h^{-1}([0,1))$. Then, 
since $x_0 \in P -U_h$, we must have that $h(x_0) = 1$.
Thus, by the assumptions given in the statement of this lemma, the fiber 
$W_{x_0}$ is disjoint from the cube $[-\epsilon, \epsilon]^N$.   
In fact, using the compactness
of $P$, we can find a value $0 < \delta < 1$ such that
$W_x \cap [-\epsilon, \epsilon]^N = \varnothing$ for all
$x \in h^{-1}((1 - \delta, 1])$. 
We can prove the existence 
of such a $\delta > 0$ as follows: Let $V_{\epsilon}$ be the 
subspace of $P$ of all points $x$ such that $W_x$ is disjoint
from $[-\epsilon, \epsilon]^N$. 
The fact that $W$ is closed and 
$[-\epsilon, \epsilon]^N$ is compact ensures that $V_{\epsilon}$
is open in $P$. Next, note that $V_{\epsilon}$ and the collection
$\{ h^{-1}([0,1 - s)) \}_{0 <  s < 1}$ form an open cover for $P$. Then, 
the compactness of $P$ implies that there is a $\delta > 0$ 
such that $V_{\epsilon}$ and $h^{-1}([0,1 - \delta))$ form an
open cover for $P$. 
Consequently, since  $h^{-1}((1 - \delta, 1])$
 and $h^{-1}([0,1 - \delta))$ are disjoint, we must have  
 that $h^{-1}((1 - \delta, 1]) \subseteq V_{\epsilon}$. 
 Therefore, for any  $x \in h^{-1}((1 - \delta,1])$, 
 the fiber $W_x$ must be disjoint from $[-\epsilon, \epsilon]^N$. 

Now, let us fix an arbitrary value $R>0$. We claim that, for this 
$R>0$, we can find an open neighborhood $V_{x_0}$ of $x_0$ 
such that $V_{x_0} \times B(y_0, R)$ is disjoint from $W^{F,h}$. 
To prove this, pick first a large enough value $r > 0$ so that the cube 
$[-r,r]^N$ contains $B(y_0, R)$.  
By condition (iv) from the definition of elementary 
$\epsilon$-scanning map, it is possible to find a value
$\delta'$ in the open interval $(0, \delta)$ such that 
$[-r, r]^N \subseteq F_{h(x)}\big( (-\epsilon, \epsilon)^N \big)$
for all $x$ in $h^{-1}((1 - \delta', 1])$.  
Then, since $W^{F,h}_x = F_{h(x)}(W_x)$ for all $x \in U_h$
and $W_x \cap [-\epsilon, \epsilon]^N = \varnothing$
for all $x \in h^{-1}((1 - \delta', 1])$, it follows that
\begin{equation} \label{partition1}
W^{F,h}_x \cap [-r, r]^N = \varnothing \text{ for all } x \in U_h \cap h^{-1}((1 - \delta', 1]). 
\end{equation}
On the other hand, since $W^{F,h}$ is empty over $P- U_h$, we also have that  
\begin{equation} \label{partition2}
W^{F,h}_x \cap [-r, r]^N = \varnothing \text{ for all } x \in (P-U_h) \cap h^{-1}((1 - \delta', 1]). 
\end{equation}
Thus, if we set $V_{x_0} := h^{-1}((1 - \delta', 1])$, the statements given in 
(\ref{partition1}) and (\ref{partition2}) imply that 
$W^{F,h}_x \cap [-r, r]^N = \varnothing$ for all 
$x \in V_{x_0}$, which is equivalent to saying that 
$V_{x_0}\times [-r, r]^N$ does not intersect  $W^{F,h}$. Since 
$[-r,r]^N$ contains $B(y_0, R)$, the product 
$V_{x_0}\times B(y_0,R)$ is also disjoint from
$W^{F,h}$, and we have thus concluded the proof of Claim A.  

To prove that the restriction of the standard projection 
$P\times \mathbb{R}^N\rightarrow P$ on $W^{F,h}$
 is a PL submersion of codimension $d$, one just 
 needs to observe that this map can be factored as 
 \[
 W^{F,h} \rightarrow W_{U_h} \hookrightarrow W \rightarrow P,
 \]
 where the first map is the restriction of $\widehat{F}^{-1}$ on $W^{F,h}$, 
 which is a PL homeomorphism, the second map is 
 the inclusion of $W_{U_h}$ into $W$, and the third 
 map is the restriction of the standard projection 
 $P\times\mathbb{R}^N\rightarrow P$ on $W$.
 Since the first two maps in this composition are PL submersions of codimension 0 
 and the third one is a PL submersion of codimension $d$,
we can conclude that the restriction of the 
standard projection $P\times \mathbb{R}^N \rightarrow P$ on $W^{F,h}$ is a PL submersion of codimension $d$.
 Thus, we have proven that $W^{F,h}$ is an element of $\Psi_d(\mathbb{R}^N)(P)$.  
Also, by the way we constructed $W^{F,h}$, 
it is evident that this element satisfies conditions (i) and (ii) given in the statement of this lemma.  
 
 Now suppose that $f: V \rightarrow \mathbb{R}^N\cup \{\infty\}$ 
 is a partially-defined marking function for $W$ such that 
 $V$ is an open set containing $U_h$,
 and consider the function 
 $f^{F,h}: P \rightarrow \mathbb{R}^N\cup \{\infty\}$
 defined in (\ref{new.marking.function}). 
Evidently, by the definition given in 
(\ref{new.marking.function}),
the pre-image $(f^{F,h})^{-1}(\infty)$ 
is equal to $(P - U_{h})\cup f^{-1}(\infty)$. 
Also, note that a fiber $W^{F,h}_x$ of $W^{F,h}$ is empty 
if and only if $x$ is a point in 
$(P - U_{h})\cup f^{-1}(\infty)$. 
In particular, the function $f^{F,h}$ 
satisfies condition (i) from the definition of 
marking function, given in Definition \ref{marking}. 
On the other hand, for any $x$ in the open set 
$U_{h}\cap f^{-1}(\mathbb{R}^N)$ 
(which is the pre-image
of $\mathbb{R}^N$ under $f^{F,h}$),  
condition (ii) from the definition of marking function 
guarantees that $f(x) \in W_x$, which evidently 
implies that 
$F_{h(x)}(f(x)) \in F_{h(x)}(W_x)$.  Then, since 
$f^{F,h}(x) = F_{h(x)}(f(x))$ and $W^{F,h}_x = F_{h(x)}(W_x)$, we can 
conclude  that $f^{F,h}(x)$ is a point in $W^{F,h}_x$ 
whenever $x \in U_{h}\cap f^{-1}(\mathbb{R}^N) = (f^{F,h})^{-1}(\mathbb{R}^N)$. 
Therefore, $f^{F,h}$ also satisfies condition (ii) from  Definition \ref{marking}. 
Furthermore, by condition (ii) from the definition
of elementary $\epsilon$-scanning map (Definition \ref{elem.scan}), the PL homeomorphism
$\widehat{F}$ maps the product $U_h \times \{\mathbf{0}\}$
to itself, where $\mathbf{0}$ once again denotes the origin 
of $\mathbb{R}^N$.
From this last observation,
it follows that $f^{F,h}$ 
also satisfies condition (iii) 
from the definition of marking function.   

It remains to show that $f^{F,h}$ is piecewise linear on 
$U_h\cap f^{-1}(\mathbb{R}^N)$ and continuous on all of $P$. 
To show that $f^{F,h}$ is piecewise linear on 
$U_h\cap f^{-1}(\mathbb{R}^N)$, 
it is enough to observe that the formula 
given in (\ref{new.marking.function}) asserts  
that $f^{F,h}$ is equal to a composition of 
PL maps when restricted to $U_h\cap f^{-1}(\mathbb{R}^N)$.  
In particular, since any PL map is continuous, the function $f^{F,h}$ is 
continuous on $U_h\cap f^{-1}(\mathbb{R}^N)$. 
Thus, it only remains to show that $f^{F,h}$ is continuous on 
$(P - U_h)\cup f^{-1}(\infty)$. 
Fix then any point 
$x_0$  in $(P - U_h)\cup f^{-1}(\infty)$. Note that the 
fiber $W^{F,h}_{x_0}$ is empty. In particular, $f^{F,h}(x_0) = \infty$. 
Also, consider the
following family of open sets in $\mathbb{R}^N \cup \{ \infty \}$: 
\begin{equation} \label{fund.system}
\Big\{ \big(\mathbb{R}^N \cup \{ \infty \}\big) - [-r, r]^N \Big\}_{r>0}.
\end{equation}
Since $W^{F,h}_{x_0} = \varnothing$, we can repeat the argument we did to 
establish (\ref{partition1}) and (\ref{partition2}) in the proof of Claim A to
show that, for any $r>0$, there exists an open neighborhood $V_r$ of $x_0$ in $P$
such that $W^{F,h}_x\cap [-r,r]^N = \varnothing$  for any $x \in V_r$.
This then  implies that $f^{F,h}(V_r) \subseteq \big(\mathbb{R}^N \cup \{ \infty \}\big) - [-r, r]^N$.
Since the family given in (\ref{fund.system}) is 
a fundamental system of neighborhoods of the point $f^{F,h}(x_0) = \infty$ in $\mathbb{R}^N \cup \{ \infty \}$, 
we can conclude that $f^{F,h}$ is continuous at $x_0$.  
\end{proof}

We shall adopt the following terminology throughout the
rest of this section. 

\theoremstyle{definition} \newtheorem{scanning.result}[marking]{Definition}

\begin{scanning.result} \label{scanning.result}
Let $P$, $W$, $\epsilon$, $F$, $h$, and
$f$ be as in
the statement of Lemma \ref{scan.lemma}.
In particular, we assume that $f$
is defined on an open set $V \subseteq P$
containing $U_h := h^{-1}([0,1))$. 
If $W^{F,h}$ is the element of $\Psi_d(\mathbb{R}^N)(P)$  
produced via the procedure described in Lemma \ref{scan.lemma},
we shall say that $W^{F,h}$ is 
\textit{the result of performing $(\epsilon,F,h)$-scanning on $W$}. 
Similarly, if $f^{F,h}: P  \rightarrow \mathbb{R}^N\cup\{\infty\}$ is the marking
function for $W^{F,h}$ obtained from $f$ via Lemma \ref{scan.lemma},
then we say that the pair $(W^{F,h}, f^{F,h})$ is \textit{the result of performing 
$(\epsilon,F,h)$-scanning on $(W,f)$.}  
\end{scanning.result} 

\theoremstyle{definition} \newtheorem{remark.scanning.result}[marking]{Remark}

\begin{remark.scanning.result} \label{remark.scanning.result}
It is worth pointing out again that we can perform the type of scanning introduced in Lemma \ref{scan.lemma} 
on a pair $(W,f)$ even if $f$ is just a partially-defined marking function. 
Once we perform $(\epsilon,F,h)$-scanning on both $W$ and $f$, 
the resulting marking function $f^{F,h}$ will be defined on all of $P$.   
\end{remark.scanning.result}

Consider a compact PL space $P$. In the next proposition, 
we will show that the process of modifying
an element $(W,f) \in \widetilde{\Psi}_d(\mathbb{R}^N)(P)$ 
via an $(\epsilon, F, h)$-scanning can be realized as a concordance
(see Definition \ref{concordance}). 
Note that, since $(W,f) \in \widetilde{\Psi}_d(\mathbb{R}^N)(P)$,
we are implicitly assuming that the marking function $f$ is 
defined on all of $P$.  

\theoremstyle{plain} \newtheorem{scanning.concordance}[marking]{Proposition}

\begin{scanning.concordance} \label{scanning.concordance}
Let $P$ be a compact PL space and
$(W,f)$ an element of the set $\widetilde{\Psi}_d(\mathbb{R}^N)(P)$. 
If $(W^{F,h}, f^{F,h})$ is the result of performing 
$(\epsilon,F,h)$-scanning on $(W,f)$, then there 
exists an element $(\widehat{W},\widehat{f})$ of 
$\widetilde{\Psi}_d(\mathbb{R}^N)([0,1]\times P)$ 
with the following properties: 

\begin{itemize}
\item[(i)] $(\widehat{W},\widehat{f})$ is a concordance from $(W,f)$ to $(W^{F,h}, f^{F,h})$. 

\item[(ii)] If the fiber $W_x$ of $W$ over a point $x\in P$ is empty, then $\widehat{W}$ is empty over the product $[0,1]\times \{x\}$.  In other words, any empty fiber of $W$ will remain empty throughout the scanning process. 
\end{itemize}
\end{scanning.concordance}

\begin{proof}
Let  $(W^{F,h}, f^{F,h})$ be the result of performing 
$(\epsilon,F,h)$-scanning on $(W,f)$. Recall that $F$ and $h$ represent 
the following: 

\begin{itemize}
\item[($\ast$)] $F: [0,1)\times \mathbb{R}^N \rightarrow \mathbb{R}^N$ is an elementary $\epsilon$-scanning map.  

\item[($\ast\ast$)] $h:P \rightarrow [0,1]$ is a PL function such that, for every $x$ in the pre-image $h^{-1}(1)$, the fiber $W_x$ does not intersect the cube $[-\epsilon, \epsilon]^N$.
Since $f$ is defined on all of $P$, we trivially have that the set $U_h := h^{-1}([0,1))$
is contained in the domain of $f$.   
\end{itemize}

Besides these maps $F$ and $h$, we shall also use the following objects in this proof:  

\begin{itemize}
\item[$\cdot$] The constant concordance  $[0,1]\times W$ from $W$ to itself. That is, $[0,1]\times W$
is the element of the set $\Psi_d(\mathbb{R}^N)([0,1]\times P)$ whose fiber over a point 
$(t,x) \in [0,1]\times P$ is equal to $W_x$. 

\item[$\cdot$] The marking function $\bar{f}: [0,1]\times P \rightarrow \mathbb{R}^N\cup\{\infty\}$ 
 for $[0,1]\times W$ defined by 
 \[
 \bar{f}(t,x) = f(x).
\]

\item[$\cdot$]  The constant map  $c_0: P \rightarrow [0,1]$ 
which maps every point of $P$ to $0$.  
\end{itemize}

Using standard methods from PL topology, it is possible to construct a 
PL homotopy $H: [0,1]\times P \rightarrow [0,1]$ from $c_0$ to $h$
(i.e., $H_0 = c_0$ and $H_1 = h$) so that 
$H(t,x) < 1$ whenever $t < 1$. 
This last property of $H$ ensures that
the concordance $[0,1]\times W$ satisfies the following condition:  
\begin{itemize}
\vspace{0.15cm}
\item[($\ast\ast\ast$)] If $H(t,x) =1$, then the fiber of $[0,1]\times W$ over the point $(t,x)$ is disjoint
from the cube  $[-\epsilon, \epsilon]^N$.  
\vspace{0.15cm}
\end{itemize}
Note that $H(t,x) = 1$ necessarily implies that $t =1$. By virtue of the condition given in ($\ast\ast\ast$), 
we can perform $(\epsilon,F,H)$-scanning (where $F$
is the same map appearing in ($\ast$)) on the element 
$([0,1]\times W,\bar{f})$ to obtain a new element in 
$\widetilde{\Psi}_d(\mathbb{R}^N)([0,1]\times P)$.
This new element, which we will denote by 
$(\widehat{W}, \widehat{f})$, is our desired concordance from $(W,f)$ to 
$(W^{F,h}, f^{F,h})$.  
Indeed, since $H_0 = c_0$ and $H_1= h$, 
we have that  $(\widehat{W}, \widehat{f})$ is a 
concordance between the elements obtained 
by performing respectively $(\epsilon,F,c_0)$-scanning  
and $(\epsilon,F,h)$-scanning on $(W,f)$. In other words, 
$(\widehat{W}, \widehat{f})$ is a concordance from 
$(W,f)$ to $(W^{F,h}, f^{F,h})$.  
Finally, condition (ii) given in the statement of this proposition  
just follows from the fact that any empty fiber of 
$[0,1]\times W$ will remain empty after we
perform $(\epsilon,F,H)$-scanning.
\end{proof}

The following definition will be essential for
most of the scanning arguments that we will do in this section. 

\theoremstyle{definition} \newtheorem{zero.set}[marking]{Definition}

\begin{zero.set} \label{zero.set}  Let $P$ be a PL space. For an element $W$ of $\Psi_d(\mathbb{R}^N)(P)$, \textit{the zero-set of $W$}, denoted by $Z(W)$, is the subspace of $P$ consisting of all points $x$ such that the fiber $W_x$ contains the origin $\mathbf{0}$ of $\mathbb{R}^N$.      
\end{zero.set} 

It is straightforward to prove that $Z(W)$ is a closed PL subspace of $P$.  We will use this definition in the following example, which illustrates the main situation in which we will apply the scanning construction introduced in Lemma \ref{scan.lemma}. 
In this example, we will change some of our notational conventions. Specifically, 
if $P$ is a PL space and $X$ is an arbitrary subspace of $P$, then we shall denote the closure
of $X$ by $\mathrm{cl}(X)$ instead of $\overline{X}$. 

\theoremstyle{definition} \newtheorem{nested.regular}[marking]{Example}

\begin{nested.regular} \label{nested.regular}

Fix the following data: 

\begin{itemize}
\item[$\cdot$] A compact PL manifold $M$ with non-empty boundary 
and an element $W$ of $\Psi_d(\mathbb{R}^N)(M)$ such that 
$Z(W) \neq \varnothing$ and $Z(W) \subseteq M - \partial M$. 
Note that the compactness of $M$ implies that the zero-set $Z(W)$ is also compact. 

\item[$\cdot$] A partially-defined marking function 
$f: V \rightarrow \mathbb{R}^N\cup\{\infty\}$ for $W$ such that $Z(W) \subseteq V$
and $V \subseteq M - \partial M$.

\item[$\cdot$] Two regular neighborhoods  $R$ and $R'$ of $Z(W)$ in $V$ such that 
$R \subseteq \mathrm{Int}\hspace{0.05cm}R'$.  
See Appendix \ref{sec.regular} for the definition of regular neighborhood. 

\item[$\cdot$] A PL homeomorphism $k: \mathrm{cl}(R' - R) \rightarrow \partial R\times [0,1] $ which maps $\partial R$ identically to $\partial R\times\{0\}$ and maps $\partial R'$ onto $\partial R\times\{1\}$.
The existence of such a PL homeomorphism $k$ is guaranteed by Proposition \ref{nested.regular.thm} in 
Appendix \ref{sec.regular}.  
\end{itemize}  

Since $R'$ is a regular neighborhood of $Z(W)$, we have in particular that 
$Z(W) \subseteq \mathrm{Int}\hspace{0.05cm}R'$, which is 
equivalent to saying that $Z(W)$ is disjoint from $\mathrm{cl}(M-R')$. 
Then, since $W$ is a closed subspace of $M\times \mathbb{R}^N$, there must exist
a value $\epsilon>0$ such that 
$W_x \cap [-\epsilon,\epsilon]^N = \varnothing$
for all fibers $W_x$ over $\mathrm{cl}(M-R')$. 
Fix then such a value $\epsilon$ and choose an elementary 
$\epsilon$-scanning map $F:[0,1)\times \mathbb{R}^N \rightarrow \mathbb{R}^N$ 
corresponding to this $\epsilon$.  
Additionally, let $h: M \rightarrow [0,1] $ be the piecewise linear 
function defined by 
\begin{equation} \label{bump.function}
  h(x) = \left\{
    \begin{array}{rl}
      0 & \text{if } x\in R,\\
      p_2(k(x)) & \text{if } \mathrm{cl}(R' - R),\\
      1 & \text{if } \mathrm{cl}(M - R'),
    \end{array} \right.
\end{equation}
where $p_2$ is the standard projection $\partial R\times [0,1]\rightarrow [0,1]$ 
onto the second factor.   
A function $h(x)$ as defined in (\ref{bump.function}) 
will sometimes be called a \textit{bump function relative to the pair $R$, $R'$}.  
Since the set $U_h := h^{-1}([0,1))$ is contained in $V$ 
and all the fibers of $W$ over $h^{-1}(1)$ do not intersect the cube 
$[-\epsilon, \epsilon]^N$, we can perform $(\epsilon,F,h)$-scanning 
on the pair $(W,f)$ to produce an element $(W^{F,h},f^{F,h})$ of 
$\widetilde{\Psi}_d(\mathbb{R}^N)(M)$.   
It is straightforward to check that the pair $(W^{F,h},f^{F,h})$ has the following properties: 

\begin{itemize}
\item[$\cdot$] $(W^{F,h},f^{F,h}) = (W,f)$ over the regular neighborhood $R$.   

\item[$\cdot$] Over $\mathrm{cl}(M - R')$, the pair $(W^{F,h},f^{F,h})$ 
agrees with the \textit{trivial element} $(\varnothing,\infty)$ of 
$\widetilde{\Psi}_d(\mathbb{R}^N)(M)$, i.e., $(\varnothing,\infty)$ is
the unique element of $\widetilde{\Psi}_d(\mathbb{R}^N)(M)$
whose underlying PL space is empty and whose marking
 function is the constant map which sends all points to $\infty$. 
\end{itemize}

We will use the construction given in this example when we compare the simplicial sets
$\Psi_d(\mathbb{R}^N)_{\bullet}$ and $\widetilde{\Psi}_d(\mathbb{R}^N)_{\bullet}$. 

\end{nested.regular}

In the case when $Z(W) = \varnothing$, Proposition  \ref{scanning.concordance} 
yields the following important result. 

\theoremstyle{plain} \newtheorem{scan.trivial}[marking]{Corollary}

\begin{scan.trivial} \label{scan.trivial}
If $P$ is a compact PL space and $(W,f)$ is an element of $\widetilde{\Psi}_d(\mathbb{R}^N)(P)$ with $Z(W) = \varnothing$, then there is a concordance $(\widehat{W}, \widehat{f})$ from $(W,f)$ to the trivial element 
$(\varnothing, \infty)$. Moreover, if the fiber of $W$ over a point $x\in P$ is empty, then $\widehat{W}$ 
can be chosen to be empty over the product $[0,1]\times\{x\}$.   
\end{scan.trivial}

\begin{proof}
Since $Z(W) = \varnothing$, the compactness of $P$ (and the fact that $W$ is closed in $P\times \mathbb{R}^N$) 
ensures that there is an $\epsilon >0$ such that every fiber $W_x$ of $W$ is disjoint from the cube $[-\epsilon,\epsilon]^N$. Fix then an elementary $\epsilon$-scanning map $F:[0,1) \times \mathbb{R}^N \rightarrow \mathbb{R}^N$ corresponding to this $\epsilon$. 
Also, let $c_1: P \rightarrow [0,1]$ be the constant function which maps every point of $P$ to $1$. Since the intersection $W_x\cap[-\epsilon,\epsilon]^N$ is empty for all $x$ in $P$, we can perform 
$(\epsilon, F, c_1)$-scanning on the pair $(W,f)$. 
The result of this scanning will be the trivial element $(\varnothing, \infty)$ of $\widetilde{\Psi}_d(\mathbb{R}^N)(P)$. By Proposition \ref{scanning.concordance}, there is a concordance $(\widehat{W}, \widehat{f})$ from $(W,f)$ to $(\varnothing, \infty)$, which we can assume to be equal to the trivial element of $\widetilde{\Psi}_d(\mathbb{R}^N)([0,1]\times P)$ over the product $[0,1]\times f^{-1}(\infty)$.
 \end{proof}

We can use Corollary \ref{scan.trivial} to prove 
the following property of the simplicial set $\widetilde{\Psi}_d(\mathbb{R}^N)_{\bullet}$. 

\theoremstyle{plain} \newtheorem{marking.connected}[marking]{Corollary}

\begin{marking.connected} \label{marking.connected}
The simplicial set $\widetilde{\Psi}_d(\mathbb{R}^N)_{\bullet}$ is path-connected.    
\end{marking.connected}
   
\begin{proof} 
Recall that a $0$-simplex of $\widetilde{\Psi}_d(\mathbb{R}^N)_{\bullet}$ 
is simply a pair $(W,x)$, where $W$ is a $d$-dimensional piecewise linear 
submanifold of $\mathbb{R}^N$ (which is closed as a subspace) and $x$ 
is a point in $W$. If $W = \varnothing$, then $x = \infty$, and if $W$ 
intersects the origin $\mathbf{0}$ of $\mathbb{R}^N$, then we must have 
$x = \mathbf{0}$.  

Since $\widetilde{\Psi}_d(\mathbb{R}^N)_{\bullet}$ is Kan, it is enough 
to show that any $0$-simplex $(W,x)$ is concordant to the trivial element 
$(\varnothing,\infty)$ of $\widetilde{\Psi}_d(\mathbb{R}^N)_{0}$. 
Fix then an arbitrary element $(W,x)$ of $\widetilde{\Psi}_d(\mathbb{R}^N)_{0}$.
We consider two cases:

\noindent \textit{\underline{Case 1:}} If the submanifold 
$W$ does not intersect the origin $\mathbf{0}$ of $\mathbb{R}^N$, 
then Corollary \ref{scan.trivial} ensures that there is a concordance 
$(\widehat{W}, \widehat{f})$ between $(W,x)$ and  $(\varnothing,\infty)$.

\noindent \textit{\underline{Case 2:}} On the other hand, suppose that $W$ 
contains the origin $\mathbf{0}$. 
In particular, we must have that $x = \mathbf{0}$. 
To prove that $(W,x)$ is concordant to $(\varnothing,\infty)$
in this case, we will first show 
that there exists a point $P$ in $\mathbb{R}^N$ such
that the line segment $\overline{\mathbf{0}\hspace{0.05cm}P}$ only intersects 
$W$ at $\mathbf{0}$.  
Indeed, since $W$ is a closed PL subspace of 
$\mathbb{R}^N$, we can find a simplicial complex $K$ which triangulates 
$\mathbb{R}^N$ and contains a subcomplex $L$ triangulating $W$ 
(the existence of such a simplicial complex $K$ follows from 
Theorem 3.6 of \cite{plhud}). Moreover, 
by subdividing $K$ if necessary, we can assume that $\mathbf{0}$ is a 
vertex of $K$. Now, consider the star $\mathrm{st}(\mathbf{0}, K)$,
i.e., this is the subcomplex of $K$ obtained by taking the union of all simplices that 
contain $\mathbf{0}$ as a vertex and all faces of such simplices. 
Evidently, the star $\mathrm{st}(\mathbf{0}, L)$
is a subcomplex of $\mathrm{st}(\mathbf{0}, K)$. By subdividing $K$ further, 
we can assume that $\mathrm{st}(\mathbf{0}, L)$ is \textit{full} in
$\mathrm{st}(\mathbf{0}, K)$, i.e., we can ensure that 
$\mathrm{st}(\mathbf{0}, L)$ has the following property: 
If all the vertices of a simplex
$\sigma \in \mathrm{st}(\mathbf{0}, K)$ are in $\mathrm{st}(\mathbf{0}, L)$, 
then $\sigma$ itself must be a simplex of $\mathrm{st}(\mathbf{0}, L)$
(see Section \S 3.2 of  \cite{RS}). Now, consider an $N$-simplex 
$\sigma$ of $\mathrm{st}(\mathbf{0}, K)$. Evidently, $\sigma$ cannot
belong to $\mathrm{st}(\mathbf{0}, L)$ and, by the fullness 
of $\mathrm{st}(\mathbf{0}, L)$, there must be a vertex $P$
of $\mathrm{st}(\mathbf{0}, K)$ which does not belong to 
$\mathrm{st}(\mathbf{0}, L)$. Then, the line segment 
$\overline{\mathbf{0}\hspace{0.05cm}P}$ must be a 1-simplex 
of $K$ which only intersects $|L| = W$ at $\mathbf{0}$. 

Now, let $\vec{v}$ be the vector in $\mathbb{R}^N$ with tail at $P$ and head 
at $\mathbf{0}$, and consider the PL ambient isotopy 
$H: [0,1]\times \mathbb{R}^N \rightarrow \mathbb{R}^N$
defined by $H_t(x) = x - t\vec{v}$.  
Since $H$ is a PL ambient 
isotopy, the collection of PL manifolds $\{H_t(W)\}_{t\in [0,1]}$ 
defines an element $\widehat{W} \in \Psi_d(\mathbb{R}^N)([0,1])$, i.e.,
$\widehat{W}$ is the element of $\Psi_d(\mathbb{R}^N)([0,1])$
with the property that $\widehat{W}_t = H_t(W)$ for each $t \in [0,1]$. 
In particular, 
$\widehat{W}$ is a concordance from $W$ to the element 
$W' := H_1(W) \in \Psi_d(\mathbb{R}^N)_0$.  
Moreover, since the line $\overline{\mathbf{0}\hspace{0.05cm}P}$ 
only intersects $W$ at $\mathbf{0}$, any PL manifold $H_t(W)$ with 
$t \in (0,1]$ will not contain $\mathbf{0}$.  Therefore, if 
$\widehat{f}: [0,1] \rightarrow \mathbb{R}^N$ is the PL function defined
by $\widehat{f}(t) = H_t(\mathbf{0})$ and $x' := H_1(\mathbf{0})$, 
then the pair $(\widehat{W}, \widehat{f})$ is an element of
$\widetilde{\Psi}_d(\mathbb{R}^N)([0,1])$ which 
is a concordance from 
 $(W, \mathbf{0})$ to $(W', x')$.
 Since $x' \neq \mathbf{0}$, Corollary \ref{scan.trivial}
 ensures that 
 there is a concordance $(\widehat{W}', \widehat{f}')$ from 
$(W',x')$ to $(\varnothing, \infty)$. By concatenating 
$(\widehat{W}, \widehat{f})$ and $(\widehat{W}', \widehat{f'})$, 
we produce a concordance from $(W,\mathbf{0})$ to  $(\varnothing, \infty)$.  
\end{proof}

\subsection{Comparing spaces of manifolds}  The simplicial sets in the sequence $\{ \widetilde{\Psi}_d(\mathbb{R}^N)_{\bullet}\}_N$ can also be assembled into a spectrum.
For each positive integer $N$, the base-point that we will consider for $\widetilde{\Psi}_d(\mathbb{R}^N)_{\bullet}$ will be the subsimplicial set generated by the 0-simplex $(\varnothing, \infty)$. Using the same scanning process that we used to define the spectrum $\Psi_d$, we can define structure maps
 
\begin{equation} \label{structure.map}
\widetilde{\sigma}_N:S^1\wedge \widetilde{\Psi}_d(\mathbb{R}^N)_{\bullet} \rightarrow \widetilde{\Psi}_d(\mathbb{R}^{N+1})_{\bullet}
\end{equation}
for all positive integers $N$. 
For any simplex $(W,f)$ of  $\widetilde{\Psi}_d(\mathbb{R}^N)_{\bullet}$, the map $\widetilde{\sigma}_N$ will push all the values of the function $f$ towards $\infty$ while simultaneously pushing all the fibers of $W$ towards the empty manifold.   These maps turn the sequence $\{ \widetilde{\Psi}_d(\mathbb{R}^N)_{\bullet} \}_N$ 
into a spectrum, which from now on we will denote by $\widetilde{\Psi}_d$. 

The reason for introducing $\widetilde{\Psi}_d$ is that this spectrum will serve as a bridge between the spectrum of PL manifolds $\Psi_d$ and the Madsen-Tillmann spectrum $\mathbf{MT}PL(d)$, which we will introduce in Section \S\ref{section.grass}. 
More precisely, notice that for each positive integer $N$ there is an obvious forgetful map 
\begin{equation}  \label{forgetful.map}
\mathcal{F}_N:  \widetilde{\Psi}_d(\mathbb{R}^N)_{\bullet} \rightarrow \Psi_d(\mathbb{R}^N)_{\bullet}
\end{equation}
which maps any tuple $(W,f)$ to $W$.  
All of these forgetful maps assemble into a map of spectra
\begin{equation} \label{forgetful.spectra.map}
\mathcal{F}: \widetilde{\Psi}_d \rightarrow \Psi_d,
\end{equation}
and the main goal of this section is to prove the following theorem. 

\theoremstyle{plain} \newtheorem{marking.equivalence}[marking]{Theorem}

\begin{marking.equivalence} \label{marking.equivalence}
The map of spectra $\mathcal{F}: \widetilde{\Psi}_d \rightarrow \Psi_d$ defined in (\ref{forgetful.spectra.map}) is a weak equivalence.
\end{marking.equivalence}

Later, in \S\ref{section.grass}, we will show that there is a 
weak equivalence $\mathbf{MT}PL(d) \rightarrow \widetilde{\Psi}_d$, 
which would complete the proof of our main theorem.

\theoremstyle{definition} \newtheorem{zero.set.remark}[marking]{Remark}

\begin{zero.set.remark} \label{zero.set.remark} 
For any $W \in \Psi_d(\mathbb{R}^N)(P)$, 
the restriction $W_{Z(W)}$ over the zero-set $Z(W)$
admits only one marking function. Namely, the function 
$f_{\mathbf{0}}: Z(W) \rightarrow \mathbb{R}^N$  which maps
all points of $Z(W)$ to the origin $\mathbf{0} \in \mathbb{R}^N$. 
\end{zero.set.remark}

Let us give a brief outline of the proof of Theorem \ref{marking.equivalence}. 
To prove that the map (\ref{forgetful.map}) induces a surjection between 
homotopy groups, we will show that, for any element $W$ 
of $\Psi_d(\mathbb{R}^N)(\Delta^p)$ which is empty over 
$\partial\Delta^p$, there exists an element
of $\Psi_d(\mathbb{R}^N)(\Delta^p)$ which admits a marking function
and is concordant to $W$ \textit{relative to} $\partial\Delta^p$, i.e., 
the concordance we will construct will be empty over $[0,1]\times \partial\Delta^p$.  
This will be done by first producing a partially-defined  
marking function for $W$ defined on a small open neighborhood of $Z(W)$ 
(which will be accomplished using Lemma \ref{marking.germ}) and then 
use the scanning techniques from the previous section to turn this 
partially-defined marking function into one which is globally defined. 
On the other hand, the key step to prove that $\mathcal{F}_N$ induces an injection 
between homotopy groups is to show that, at least when we restrict to 
a sufficiently small open neighborhood of $Z(W)$, any two marking 
functions for a $p$-simplex $W$ are homotopic. 
This will be proven in Lemma \ref{marking.unique}.

\theoremstyle{plain} \newtheorem{marking.germ}[marking]{Lemma}

\begin{marking.germ} \label{marking.germ} 
Let $M$ be a compact PL manifold with $\partial M \neq \varnothing$. If $W$ is an element of 
$\Psi_d(\mathbb{R}^N)(M)$ such that $Z(W) \neq \varnothing$ and $Z(W) \subseteq M - \partial M$,  
then there exists a partially-defined marking function $f$ for $W$ defined 
on an open neighborhood $V$ of $Z(W)$ in $M - \partial M$.  
Moreover, the marking function $f$ will only take values in $\mathbb{R}^N$.  
\end{marking.germ}

\begin{proof}
Before we begin with the proof of this lemma, 
there are a few preliminary constructions we need to discuss.
First, let $\pi:W \rightarrow M$ be the restriction of the standard projection $M\times \mathbb{R}^N \rightarrow M$ on $W$.
As observed at the end of Definition \ref{pl.sub} in the appendix, 
the image of any PL submersion is open. Therefore,
we can find a regular neighborhood of $Z(W)$ which is contained in the image $\pi(W)$. 
Fix then a regular neighborhood $R$ of $Z(W)$ such that $R \subset \pi(W)$.  
According to Proposition \ref{map.cylinder} in Appendix \ref{sec.regular},
there exists a piecewise linear map $h: \partial R \rightarrow Z(W)$ such that $R$ is homeomorphic to the mapping cylinder $\mathrm{Cyl}(h)$. Moreover, if $i$ denotes the obvious inclusion $Z(W)\hookrightarrow \mathrm{Cyl}(h)$, then  we can choose a homeomorphism $H: \mathrm{Cyl}(h) \rightarrow R$ 
so that the composition 
\[
\xymatrix{ Z(W) \hspace{0.1cm} \ar@{^{(}->}[r]^{\hspace{-0.1cm}i} & \mathrm{Cyl}(h) \ar[r]^{\hspace{0.4cm}H} & R 
}
\]
maps $Z(W)$ identically to itself. 

As usual, we denote the restriction of $W$ over $R$ by $W_R$. Also, for any $\epsilon$ in the interval $(0,1)$, we will denote by 
$R_{\epsilon}$ the image of $\partial R \times [0,\epsilon)$ under the composite
$\partial R \times [0,1] \twoheadrightarrow \mathrm{Cyl}(h) \stackrel{H}{\longrightarrow} R$, where the first map
is the quotient map from $\partial R \times [0,1]$ onto $\mathrm{Cyl}(h)$. 
Evidently, $R_{\epsilon}$ is an open neighborhood of $Z(W)$ in $M - \partial M$. 

With all these preliminaries sorted out, we can now start with the details of the proof.
Consider then the commutative diagram
\begin{equation} \label{diag.marking.gem}
\xymatrix{ \partial R \times \{0\} \vspace{0.1cm} \ar@{^{(}->}[d] \ar[r] & W_R \ar[d]^{\pi|_{W_R}} \\
\partial R \times [0,1] \ar[r] &  R,
}
\end{equation}  
where the top and bottom horizontal maps are respectively the composites 
\[
\partial R \times \{0\} \stackrel{\mathrm{pr}_1}{\longrightarrow} \partial R 
\stackrel{h}{\longrightarrow} Z(W) \stackrel{\tilde{c}_{\mathbf{0}}}{\longrightarrow} W_R
\]
\[
\partial R \times [0,1] \twoheadrightarrow \mathrm{Cyl}(h) \stackrel{H}{\longrightarrow} R.
\]
The right-most map in the first composite is the inclusion defined by $\tilde{c}_{\mathbf{0}}(x) = (x, \mathbf{0})$,
where $\mathbf{0}$ denotes once again the origin of $\mathbb{R}^N$. 
Since any submersion is a microfibration (see Appendix \ref{sec.pl.sub}), 
we can find an $\epsilon \in (0,1)$ so that the bottom map in 
(\ref{diag.marking.gem}) admits a microlift with respect to $\pi|_{W_R}$ defined on 
$\partial R \times [0,\epsilon)$, as illustrated in the following figure: 
\begin{equation} \label{diag.marking.gem2}
\xymatrix{ & \partial R \times \{0\} \vspace{0.1cm} \ar@{^{(}->}[d] \ar[r] & W_R \ar[d]^{\pi|_{W_R}} \\
\partial R \times [0,\epsilon)  \ar@{-->}[urr]  \ar@{^{(}->}[r] & \partial R \times [0,1] \ar[r] &  R.
}
\end{equation}
It is readily seen that the microlift in (\ref{diag.marking.gem2}) factors through $R_{\epsilon}$, and we thus obtain 
a commutative diagram of the form
\begin{equation}\label{diag.marking.gem3}
\xymatrix{ & W_R \ar[d]^{\pi|_{W_R}} \\
R_{\epsilon} \ar@{-->}[ur]  \ar@{^{(}->}[r]  & R,
} 
\end{equation}
where the bottom map is the obvious inclusion. From now on, we will denote the lift  
$R_{\epsilon} \rightarrow W_R$ appearing in (\ref{diag.marking.gem3}) by $\tilde{f}$.
Now, fix a regular neighborhood 
$R'$ of $Z(W)$ inside $R_{\epsilon}$ (which exists because $R_{\epsilon}$ is open), and
let $f: R' \rightarrow \mathbb{R}^N$ be the map obtained by taking the composite 
\begin{equation} \label{diag.marking.gem4}
\xymatrix{
R' \ar[r]^{\hspace{-0.15cm}\tilde{f}|_{R'}} & W_R \hspace{0.1cm} \ar@{^{(}->}[r] & 
R\times \mathbb{R}^N \ar[r]^{\hspace{0.3cm}\mathrm{pr}_2} &   
 \mathbb{R}^N, 
}
\end{equation}
where the second map is the obvious inclusion and the third map is 
the standard projection from $R\times \mathbb{R}^N$ onto $\mathbb{R}^N$.  
By construction, $f$ is an $\mathbb{R}^N$-valued function, and  it is straightforward to verify that $f$ 
satisfies conditions (ii) and (iii) given in the definition of marking function.  
Furthermore, by Remark \ref{prop.microfib.rem} in Appendix \ref{sec.pl.sub}, we can assume that the map $\tilde{f}|_{R'}$ is piecewise linear, which would then imply that $f$ is also piecewise linear. Therefore, $f: R' \rightarrow \mathbb{R}^N$ is an $\mathbb{R}^N$-valued marking function for $W_{R'}$.
By restricting $f$ to $\mathrm{Int}\hspace{0.05cm}R'$, we obtain a partially-defined marking function for $W$. 
\end{proof}

\theoremstyle{plain} \newtheorem{marking.unique}[marking]{Lemma}

\begin{marking.unique} \label{marking.unique} 
Fix a PL space $P$. Let $W$ be an element of  $\Psi_d(\mathbb{R}^N)(P)$ with $Z(W)$ non-empty and compact, and let $f, g:V \rightarrow \mathbb{R}^N \cup \{\infty\}$ be two partially-defined 
marking functions for $W$ defined on an open neighborhood $V$ of $Z(W)$. Then, after possibly shrinking $V$, there exists a homotopy $F: [0,1]\times V \rightarrow  \mathbb{R}^N \cup \{\infty\}$  satisfying the following: 

\begin{itemize}
\item[(i)] $F_0 = f$ and $F_1 = g$.
\item[(ii)] $F$ is a partially-defined marking function for the constant concordance $[0,1]\times W.$
\end{itemize}
\end{marking.unique}

\begin{proof}
First of all, after shrinking the open neighborhood $V$, we can assume that the point $\infty$ is not in the image of $f$ or $g$.  
Let $\pi: W \rightarrow P$ denote the restriction of the standard projection $P\times \mathbb{R}^N \rightarrow P$ on $W$, and let $\tilde{f}, \tilde{g}: V \hookrightarrow W$ denote the piecewise linear inclusions defined respectively by $\tilde{f}(x)= (x, f(x))$ and $\tilde{g}(x)= (x, g(x))$. As pointed out in Remark \ref{remark.marking}, the fact that $\pi$ is  a piecewise linear submersion of codimension $d$ implies that $\xi = (V, W_V, \tilde{f}, \pi|_{W_V})$ is a $d$-dimensional PL microbundle. 
Then, by the Representation Theorem of Kuiper and Lashof (see Theorem \ref{kuiper.lashof} in Appendix \ref{sec.pl.micro}), there exists an open neighborhood 
$\widetilde{V}$ of the section $\tilde{f}(V)$ in $W_V$ such that $\eta = (V, \widetilde{V}, \tilde{f}, \pi|_{\widetilde{V}})$ is a piecewise linear $\mathbb{R}^d$-bundle whose total space is $\widetilde{V}$ (see Appendix \ref{sec.pl.micro} for the definition of piecewise linear $\mathbb{R}^d$-bundle). 

By shrinking the open neighborhood $V$ if necessary, we can assume that the image of $\tilde{g}$ lies entirely in 
$\widetilde{V}$.Thus, with this assumption, both $\tilde{f}$ and $\tilde{g}$ are sections of the bundle $\eta$, and since any two sections of an $\mathbb{R}^d$-bundle are fiberwise homotopic, we can find a PL homotopy 
$\widetilde{F}: [0,1]\times V \rightarrow \widetilde{V}$ such that $\widetilde{F}_0 = \tilde{f}$, $\widetilde{F}_1 = \tilde{g}$, and  $\pi(\widetilde{F}(t,x))=x$ for any $(t,x)$ in $[0,1]\times V$.  
Finally, by composing $\widetilde{F}$ with the obvious inclusions
 $\widetilde{V} \hookrightarrow W_V \hookrightarrow V\times\mathbb{R}^N$ 
 and the obvious projection $V\times\mathbb{R}^N \rightarrow \mathbb{R}^N$, we obtain a PL homotopy
$F:[0,1]\times V \rightarrow \mathbb{R}^N$ from $f$ to $g$. 
Moreover, it is straightforward to verify that $F$ is indeed a marking function for the 
constant concordance $[0,1]\times W$ over the product $[0,1]\times V$.  
\end{proof}

We can now proceed to give the proof of Theorem \ref{marking.equivalence}.  

\begin{proof}[Proof of Theorem \ref{marking.equivalence}]
It suffices to prove that, for each positive integer $N$, the map  $\mathcal{F}_N:  \widetilde{\Psi}_d(\mathbb{R}^N)_{\bullet} \rightarrow \Psi_d(\mathbb{R}^N)_{\bullet}$ defined in (\ref{forgetful.map}) is a weak homotopy equivalence. In this proof, the base-points that we will consider for $\Psi_d(\mathbb{R}^N)_{\bullet}$ and $ \widetilde{\Psi}_d(\mathbb{R}^N)_{\bullet}$ will be the subsimplicial sets generated by the 0-simplices $\varnothing$ and $(\varnothing, \infty)$ respectively. 

Consider an element $W$ in $\Psi_d(\mathbb{R}^N)(\Delta^p)$ which is empty over the boundary of $\Delta^p$. In other words, $W$ 
represents an element of the homotopy group $\pi_p\big(\Psi_d(\mathbb{R}^N)_{\bullet}\big)$. 
To prove that $\mathcal{F}_N$ induces a surjection between homotopy groups, we need to show that there exists an element $\widehat{W}$ of $\Psi_d(\mathbb{R}^N)([0,1]\times\Delta^p)$ which is empty over $[0,1]\times\partial\Delta^p$ and is a concordance between $W$  
and an element of $\Psi_d(\mathbb{R}^N)(\Delta^p)$ which admits a marking function. 
We will obtain such a concordance $\widehat{W}$ as follows. 
First, by Lemma \ref{marking.germ}, there is a sufficiently small open neighborhood $V$ of $Z(W)$ on which we can define an $\mathbb{R}^N$-valued marking function $f:V \rightarrow \mathbb{R}^N$  for $W_V$. 
Now choose two regular neighborhoods $R$ and $R'$ of $Z(W)$ inside $V$ such that $R\subseteq \mathrm{Int}\hspace{0.1cm}R'$. As we did in Example \ref{nested.regular}, we shall fix the following data: a value $\epsilon > 0$ such that each fiber of $W$ over $\mathrm{cl}(\Delta^p - R')$ is disjoint from the cube $[-\epsilon, \epsilon]^N$, an elementary $\epsilon$-scanning map $F: [0,1) \times\mathbb{R}^N \rightarrow \mathbb{R}^N$ corresponding to this $\epsilon$, and a bump function $h: \Delta^p \rightarrow [0,1]$ relative to the pair $(R,R')$ (see the function (\ref{bump.function}) defined in Example \ref{nested.regular}). 
By performing $(\epsilon, F, h)$-scanning on $W$ and $f$, we obtain an element $(W^{F,h},f^{F,h})$ of $\widetilde{\Psi}_d(\mathbb{R}^N)(\Delta^p)$ which agrees with $(W_R, f|_R)$ over $R$ and with 
$(\varnothing, \infty)$ outside of $R'$. 
Then, by 
Proposition \ref{scanning.concordance}, there exists a concordance $\widehat{W}$ 
between $W$ and $W^{F,h}$ which is empty over the product $[0,1]\times \partial\Delta^p$. 

To prove injectivity, consider two elements $(W_0,f)$ and $(W_1,g)$ of 
$\widetilde{\Psi}_d(\mathbb{R}^N)(\Delta^p)$ which agree with the tuple 
$(\varnothing,\infty)$ over $\partial\Delta^p$, and assume that the underlying 
PL spaces $W_0$ and $W_1$ represent the same element in the group 
$\pi_p(\Psi_d(\mathbb{R}^N)_{\bullet})$. Using this last assumption, 
we have to show that $(W_0,f)$ and $(W_1,g)$ represent the same class in 
$\pi_p(\widetilde{\Psi}_d(\mathbb{R}^N)_{\bullet})$.   
We will do this by constructing a concordance from 
$(W_0,f)$ to $(W_1,g)$ which agrees with the trivial element $(\varnothing, \infty)$ over $[0,1]\times \partial\Delta^p$.
To construct this concordance, we start by fixing a concordance 
$\widetilde{W} \in \Psi_d(\mathbb{R}^N)([0,1]\times \Delta^p)$  from $W_0$ to $W_1$ 
which is empty over the product $[0,1]\times\partial\Delta^p$. 
Such a concordance $\widetilde{W}$ exists because the simplicial set 
$\Psi_d(\mathbb{R}^N)_{\bullet}$ is Kan and because we are assuming that 
$W_0$ and $W_1$ represent the same element in $\pi_p(\Psi_d(\mathbb{R}^N)_{\bullet})$. 
Next, by Lemma \ref{marking.germ}, there is an open neighborhood $V$ of $Z(\widetilde{W})$ 
on which we can define an $\mathbb{R}^N$-valued marking function 
$\widetilde{f}: V \rightarrow \mathbb{R}^N$ for the restriction $\widetilde{W}_V$. 
Before we proceed any further, it is 
a good idea to introduce some notation:

\begin{itemize}
\item[$\cdot$] For $j \in \{ 0,1\}$, let $i_j: \Delta^p \hookrightarrow [0,1]\times \Delta^p$
be the inclusion given by $i_j(x) = (j,x)$. From now on, we shall denote the pre-images
$i_0^{-1}(V)$ and $i_1^{-1}(V)$ by $V_0$ and $V_1$ respectively. 

\item[$\cdot$] Also, we shall denote the compositions 
$\widetilde{f}\circ i_0|_{V_0}$ and $\widetilde{f}\circ i_1|_{V_1}$ by
$\widetilde{f}_0$ and $\widetilde{f}_1$ respectively. 
\end{itemize}

Now, there is no guarantee that  
$\widetilde{f}_0$ and  
$\widetilde{f}_1$ agree with the restrictions $f|_{V_0}$ and $g|_{V_1}$ respectively.  
However, we can fix this issue as follows. 
First, let $\widetilde{W}'$ be the element of $\Psi_d(\mathbb{R}^N)([-1,2]\times \Delta^p)$
obtained by gluing copies of the trivial concordances
$[-1,0]\times W_0$ and $[1,2]\times W_1$ to the left and right-hand sides of 
$\widetilde{W}$ respectively, and let $V'$ be the open set in the base-space
$[-1,2]\times\Delta^p$ obtained by gluing the sets 
$[-1,0]\times V_0$, $V$, and $[1,2]\times V_1$.
Note that $V'$ is an open neighborhood of the zero-set 
$Z(\widetilde{W}')$. After possibly shrinking $V_0$ and $V_1$
(which can be achieved by just shrinking $V$), Lemma \ref{marking.unique} 
guarantees the existence of partially-defined marking functions 
$F: [-1,0] \times V_0 \rightarrow \mathbb{R}^N$
and $G: [1,2] \times V_1 \rightarrow \mathbb{R}^N$  
for $[-1,0]\times W_0$ and $[1,2]\times W_1$ respectively
such that 
$F$ is a homotopy from $f|_{V_0}$ to $\widetilde{f}_0$, and
$G$ is a homotopy from $\widetilde{f}_1$ to $g|_{V_1}$.  
Then,
we can glue the PL functions $F$, $\widetilde{f}$, and $G$ to obtain
a partially-defined marking function 
$\widetilde{f}': V' \rightarrow \mathbb{R}^N$ for $\widetilde{W}'$. 
By rescaling the first factor of the base-space 
$[-1,2] \times \Delta^p$, we can regard 
$\widetilde{W}'$ as an element of $\Psi_d(\mathbb{R}^N)([0,1]\times \Delta^p)$
and $V'$ as an open neighborhood of $Z(\widetilde{W}')$ in $[0,1]\times \Delta^p$.
After this rescaling, $\widetilde{f}': V' \rightarrow \mathbb{R}^N$ becomes
a partially-defined marking function with the property that 
$\widetilde{f}'_0 = f|_{V_0}$ and $\widetilde{f}'_1 = g|_{V_1}$, 
which is exactly the condition we wanted to ensure. 
By abuse of notation, we shall relabel $\widetilde{W}'$ and 
$\widetilde{f}'$ by $\widetilde{W}$ and $\widetilde{f}$ respectively. 

Our next step is to use scanning to turn the partially-defined 
marking function $\widetilde{f}$ into one which is globally defined on $[0,1]\times\Delta^p$.  
To do this, choose two regular neighborhoods $R$ and $R'$ of 
$Z(\widetilde{W})$ in $V$ such that $R\subseteq \mathrm{Int}\hspace{0.1cm}R'$, 
and fix again the following data: a value $\epsilon > 0$ such that all the fibers of $\widetilde{W}$ outside of $R'$ do not intersect the cube $[-\epsilon,\epsilon]^N$, an elementary $\epsilon$-scanning map $F$ corresponding to this $\epsilon$, and a bump function $h:[0,1]\times\Delta^p \rightarrow [0,1]$ relative to the pair $R$ and $R'$. All of this data satisfies the conditions required in Lemma \ref{scan.lemma}, which means that we can perform 
$(\epsilon,F,h)$-scanning on the pair $(\widetilde{W}, \widetilde{f})$ to produce 
a new element of $\widetilde{\Psi}_d(\mathbb{R}^N)([0,1]\times \Delta^p)$, which we will 
denote by $(\widetilde{W}^{F,h},\widetilde{f}^{F,h})$. 
Now, if $h_0: \Delta^p \rightarrow [0,1]$ and $h_1: \Delta^p \rightarrow [0,1]$ are the PL functions defined by $h_0(x) = h(0,x)$ and $h_1(x) = h(1,x)$ respectively, then 
it is evident that $(\widetilde{W}^{F,h},\widetilde{f}^{F,h})$ is a concordance between the element $(W^{F,h_0}_0, f^{F,h_0})$ obtained by performing $(\epsilon,F,h_0)$-scanning on $(W_0,f)$ and the element $(W^{F,h_1}_1, g^{F,h_1})$ obtained by performing $(\epsilon,F,h_1)$-scanning on $(W_1,g)$.
But since Proposition \ref{scanning.concordance} guarantees that there are concordances from $(W_0,f)$ to   $(W_0^{F,h_0}, f^{F,h_0})$ and from $(W_1,g)$ to 
$(W_1^{F,h_1}, g^{F,h_1})$, we can conclude that $(W_0,f)$ and $(W_1,g)$ are concordant elements of 
$\widetilde{\Psi}_d(\mathbb{R}^N)(\Delta^p)$. Moreover, Proposition \ref{scanning.concordance} also ensures that the final concordance between $(W_0,f)$ and $(W_1,g)$ is trivial over the product $[0,1]\times\partial\Delta^p$.  
\end{proof}
 
\section{Spaces of piecewise linear planes}  \label{section.grass}

\subsection{The PL Madsen-Tillmann spectrum} \label{section.mtpld} As indicated in the introduction, in this section we will introduce the \textit{PL Madsen-Tillmann spectrum} $\mathbf{MT}PL(d)$. We will also prove that there is a weak equivalence of spectra $\mathbf{MT}PL(d) \rightarrow  \widetilde{\Psi}_d$. 

To construct the spectrum $\mathbf{MT}PL(d)$, we start by introducing the quasi-PL spaces which will play the role of the smooth grassmannian and affine smooth grassmannian in the piecewise linear category.  

\theoremstyle{definition} \newtheorem{grassm}{Convention}[section]

\begin{grassm} \label{grassm}
We will use the following notation in our definitions: 

\begin{itemize}
\item[$\cdot$] For an arbitrary PL space $P$, we will denote the standard projection $P\times\mathbb{R}^N\rightarrow P$ by $\pi$. Also, for any element $W$ of $\Psi_d(\mathbb{R}^N)(P)$, we will denote the restriction of $\pi$ on $W$ by $\pi_W$. 

\item[$\cdot$] We will continue to denote the origin of $\mathbb{R}^N$ by $\mathbf{0}$.

\item[$\cdot$] $s_{\mathbf{0}}: P \hookrightarrow P\times\mathbb{R}^N$ will be the standard inclusion defined by $s_{\mathbf{0}}(x) = (x,\mathbf{0})$.
 
\item[$\cdot$] For any $(W,f)$ in $\widetilde{\Psi}_d(\mathbb{R}^N)(P)$, $V_f$ will denote the pre-image $f^{-1}(\mathbb{R}^N)$ and $\tilde{f}: V_f \rightarrow W$ will be the map defined by $\tilde{f}(x)= (x, f(x))$. As pointed out in Remark \ref{remark.marking}, the image of $\tilde{f}$ is the graph of the marking function $f$.  
\end{itemize}
\end{grassm}

Recall that, given two PL sets $\mathcal{F}, \mathcal{F}': \mathbf{PL}^{op} \rightarrow \mathbf{Sets}$,
we say that $\mathcal{F}'$ is a \textit{PL subset} of $\mathcal{F}$ if 
$\mathcal{F}'(P) \subseteq \mathcal{F}(P)$ for all PL spaces $P$ and, for any PL map
$f: Q \rightarrow P$, the induced function 
$f^*: \mathcal{F}'(P) \rightarrow \mathcal{F}'(Q)$ is the restriction of the 
induced function $f^*: \mathcal{F}(P) \rightarrow \mathcal{F}(Q)$ on $\mathcal{F}'(P)$.
If, additionally, $\mathcal{F}$ and $\mathcal{F}'$ happen to be quasi-PL spaces,
then we say that $\mathcal{F}'$ is a \textit{quasi-PL subspace} of $\mathcal{F}$.  

\theoremstyle{definition} \newtheorem{pl.grassmannians}[grassm]{Definition}

\begin{pl.grassmannians} \label{pl.grassmannians}
$\Psi_d^{\circ}(\mathbb{R}^N)$ and $\mathrm{Gr}_d(\mathbb{R}^N)$ are the quasi-PL subspaces of 
$\Psi_d(\mathbb{R}^N)$ such that, for any PL space $P$,
the sets $\Psi_d^{\circ}(\mathbb{R}^N)(P)$ and $\mathrm{Gr}_d(\mathbb{R}^N)(P)$ are 
defined as follows: 

\begin{itemize}
\item[$\cdot$] $\Psi_d^{\circ}(\mathbb{R}^N)(P)$ is the subset of $\Psi_d(\mathbb{R}^N)(P)$ consisting
of all $W$ that contain the product $P\times\{\mathbf{0}\}$. In other words, an element $W$ of $\Psi_d(\mathbb{R}^N)(P)$ is in $\Psi_d^{\circ}(\mathbb{R}^N)(P)$ if every fiber contains the origin $\mathbf{0}$ of $\mathbb{R}^N$.  

\item[$\cdot$] $\mathrm{Gr}_d(\mathbb{R}^N)(P)$ is the subset
of $\Psi_d^{\circ}(\mathbb{R}^N)(P)$
of all elements $W$ such that the diagram
\begin{equation} \label{bundle.diagram1}
\xymatrix{
P \hspace{0.1cm} \ar@{^{(}->}[r]^{\hspace{-1cm}s_{\mathbf{0}}} & 
(P\times\mathbb{R}^N,W) \ar[r]^{\hspace{0.7cm}\pi} & P
}
\end{equation}
is a piecewise linear $(\mathbb{R}^N,\mathbb{R}^d)$-bundle. 
The reader is referred to Appendix \ref{sec.pl.micro} for the definition of 
piecewise linear $(\mathbb{R}^N,\mathbb{R}^d)$-bundle. 
\end{itemize}

We will call $\mathrm{Gr}_d(\mathbb{R}^N)$ the \textit{PL Grassmannian of $d$-planes in $\mathbb{R}^N$}. Also, we define $A\mathrm{Gr}_d^{+}(\mathbb{R}^N)$ and $A\mathrm{Gr}_d(\mathbb{R}^N)$ to be the quasi-PL subspaces of 
$\widetilde{\Psi}_d(\mathbb{R}^N)$ whose values at any PL space $P$ are the following:  

\begin{itemize}
\item[$\cdot$] $A\mathrm{Gr}_d^+(\mathbb{R}^N)(P)$ will be the set of all elements $(W,f)$ with the property that the diagram
\begin{equation}  \label{bundle.diagram2}
\xymatrix{
V_f \hspace{0.1cm} \ar@{^{(}->}[r]^{\hspace{-0.85cm} \tilde{f}} & 
(V_f \times\mathbb{R}^N,W) \ar[r]^{\hspace{0.7cm}\pi} & V_f
}
\end{equation}
is a piecewise linear $(\mathbb{R}^N,\mathbb{R}^d)$-bundle. 

\item[$\cdot$] $A\mathrm{Gr}_d(\mathbb{R}^N)(P)$ is the subset of $A\mathrm{Gr}_d^+(\mathbb{R}^N)(P)$ 
of all elements $(W,f)$ with $f^{-1}(\infty)= \varnothing$. 
\end{itemize}

The quasi-PL space  $A\mathrm{Gr}_d(\mathbb{R}^N)$ will be called the \textit{PL Grassmannian of affine $d$-planes in $\mathbb{R}^N$}. 
\end{pl.grassmannians}

\theoremstyle{definition} \newtheorem{remark.bundle}[grassm]{Remark}

\begin{remark.bundle} \label{remark.bundle}
Given any element $W$ of $\mathrm{Gr}_d(\mathbb{R}^N)(P)$, the fact that the diagram in
(\ref{bundle.diagram1}) is an $(\mathbb{R}^N,\mathbb{R}^d)$-bundle implies that each fiber $W_x$ is a 
locally flat piecewise linear $d$-plane in $\mathbb{R}^N$ which goes through the origin and is closed as a subspace. Similarly, for an element $(W,f)$ of $A\mathrm{Gr}_d(\mathbb{R}^N)(P)$, all the fibers are also locally flat piecewise linear $d$-planes, but the fibers are not required to go through the origin $\mathbf{0}$, and the marking function $f$ in a tuple $(W,f)$ measures how far away a given fiber is from $\mathbf{0}$. Finally, for an element $(W,f)$ of $A\mathrm{Gr}_d^{+}(\mathbb{R}^N)(P)$, fibers are allowed to be empty. As mentioned in the introduction, we will interpret
 $A\mathrm{Gr}_d^{+}(\mathbb{R}^N)$ as the \textit{one-point compactification} 
 of the affine Grassmannian $A\mathrm{Gr}_d(\mathbb{R}^N)$. 
\end{remark.bundle}

Let $A\mathrm{Gr}_d^{+}(\mathbb{R}^N)_{\bullet}$ be the underlying simplicial set of $A\mathrm{Gr}_d^{+}(\mathbb{R}^N)$. For each positive integer $N$, the restriction of the structure map (\ref{structure.map}) of the spectrum $\widetilde{\Psi}_d$ on $S^1\wedge A\mathrm{Gr}_d^{+}(\mathbb{R}^N)_{\bullet}$ gives a map
\begin{equation} \label{structure.map2}
\widetilde{\delta}_N:S^1\wedge A\mathrm{Gr}_d^+(\mathbb{R}^N)_{\bullet}  \rightarrow A\mathrm{Gr}_d^+(\mathbb{R}^{N+1})_{\bullet}.
\end{equation}
These are the maps that we will use to define our model of the Madsen-Tillmann spectrum. 

\theoremstyle{definition} \newtheorem{geometric.mtpl}[grassm]{Definition}

\begin{geometric.mtpl} \label{geometric.mtpl}

\textit{The PL Madsen-Tillmann spectrum} is the spectrum $\mathbf{MT}PL(d)$ whose $N$-th level is $A\mathrm{Gr}_d^{+}(\mathbb{R}^N)_{\bullet}$ and, for each $N$, the structure map $S^1\wedge A\mathrm{Gr}_d^+(\mathbb{R}^N)_{\bullet}  \rightarrow A\mathrm{Gr}_d^+(\mathbb{R}^{N+1})_{\bullet}$ is the map $\widetilde{\delta}_N$ introduced in (\ref{structure.map2}). 
\end{geometric.mtpl}

The canonical inclusions $A\mathrm{Gr}_d^+(\mathbb{R}^N)_{\bullet}\hookrightarrow \widetilde{\Psi}_d(\mathbb{R}^N)_{\bullet}$ assemble into a map of spectra
\begin{equation} \label{map.spectra1}
\mathcal{I}: \mathbf{MT}PL(d)  \rightarrow \widetilde{\Psi}_d,
\end{equation}
and in Theorem \ref{hatcher} we will show that this map is a weak equivalence.  

\subsection{Equivalence of PL Grassmannians} \label{equivalence.pl.grassmannians} 

Let $\mathrm{Gr}_d(\mathbb{R}^N)_{\bullet}$ and $\Psi_d^{\circ}(\mathbb{R}^N)_{\bullet}$ be the 
underlying simplicial sets of  $\mathrm{Gr}_d(\mathbb{R}^N)$ and $\Psi_d^{\circ}(\mathbb{R}^N)$ respectively. 
An important intermediate step to prove Theorem \ref{hatcher} is to show that the canonical inclusion
\[
\mathcal{J}_N: \mathrm{Gr}_d(\mathbb{R}^N)_{\bullet} \hookrightarrow \Psi_d^{\circ}(\mathbb{R}^N)_{\bullet}
\]
is a weak homotopy equivalence. For the proof of this weak equivalence,  
we require an alternative model for the PL Grassmannian, which we will introduce in Definition \ref{germ.grassmannian}. 
To introduce this new model of the PL Grassmannian,   
we require a preliminary construction, which will be given
in Definition \ref{equivalence.germs}. Before reading the details of this
construction, the reader is encouraged to review the definition of the PL 
set $\Psi_d(U): \mathbf{PL}^{op} \rightarrow \mathbf{Sets}$ given in 
Definition \ref{spaceman}. From now on, we shall 
typically use $V$ instead of $U$ to denote open sets in $\mathbb{R}^N$. 

\theoremstyle{definition} \newtheorem{equivalence.germs}[grassm]{Definition}

\begin{equivalence.germs} \label{equivalence.germs}
Once again,
let $\mathbf{0}$ denote the origin of $\mathbb{R}^N$. 

\begin{itemize}
\item[(i)] For any open neighborhood $V$ of $\mathbf{0}$
in $\mathbb{R}^N$, let $\Psi_d^{\circ}(V)$ be the quasi-PL
subspace of $\Psi_d(V)$ such that, for any PL space $P$, 
$\Psi_d^{\circ}(V)(P)$ is the subset of $\Psi_d(V)(P)$
of all $W$ that contain the product $P \times \{\mathbf{0}\}$.
If $V = \mathbb{R}^N$, we get the quasi-PL space 
$\Psi_d^{\circ}(\mathbb{R}^N)$ introduced in 
Definition \ref{pl.grassmannians}. 

\item[(ii)] For any PL space $P$, 
let $\Psi_d(\mathbf{0},P)$ be the disjoint union of sets
\[
\coprod_{V} \Psi_d^{\circ}(V)(P),
\]
where $V$ ranges over all open neighborhoods of $\mathbf{0}$ in $\mathbb{R}^N$. 
Two elements $W_1$ and $W_2$ of $\Psi_d(\mathbf{0},P)$ are said to be \textit{germ equivalent} (expressed in symbols as $W_1 \sim_g W_2$) if 
\[
W_1 \cap V' = W_2 \cap V'
\]
for some open neighborhood $V'$  in $P\times \mathbb{R}^N$ of the zero-section $P\times\{\mathbf{0}\}$. 
\end{itemize}
\end{equivalence.germs}

It is straightforward to verify that $\sim_g$ is an equivalence relation on $\Psi_d(\mathbf{0},P)$. From now on, the equivalence class of an element $W$ of $\Psi_d(\mathbf{0},P)$ with respect to $\sim_g$ will be  denoted by $[W]$. We shall call $[W]$ \textit{the germ of $W$ near $\mathbf{0}$}. 
Also, for our second model of the PL Grassmannian, we will adopt the following notation: If $V$ is any open neighborhood of the origin in $\mathbb{R}^N$,
then $f^*_V$ will denote the function $\Psi_d^{\circ}(V)(P) \rightarrow \Psi_d^{\circ}(V)(Q)$ induced by a PL map $f:Q \rightarrow P$.   

\theoremstyle{definition} \newtheorem{germ.grassmannian}[grassm]{Definition}

\begin{germ.grassmannian}  \label{germ.grassmannian}
The \textit{germ PL Grassmannian} is the quasi-PL space $\widetilde{\mathrm{Gr}}_d(\mathbb{R}^N): \mathbf{PL}^{op}\rightarrow \mathbf{Sets}$ defined as follows: 

\begin{itemize}
\item[(1)] For any PL space $P$, $\widetilde{\mathrm{Gr}}_d(\mathbb{R}^N)(P) = \Psi_d(0,P)/\sim_g$. In other words, $\widetilde{\mathrm{Gr}}_d(\mathbb{R}^N)(P)$
is the set of all \textit{germs} $[W]$ of elements $W \in \Psi_d(\mathbf{0}, P)$. 

\item[(2)] The function $f^*: \widetilde{\mathrm{Gr}}_d(\mathbb{R}^N)(P) \rightarrow \widetilde{\mathrm{Gr}}_d(\mathbb{R}^N)(Q)$ induced by a piecewise linear map $f:Q \rightarrow P$ is defined as follows: If $W$ is an element in   $\Psi_d^{\circ}(V)(P) $, then $f^*([W]) = [f^*_VW]$. 
\end{itemize}
 
\end{germ.grassmannian} 

It is not hard to verify that any function of the form  $f^*: \widetilde{\mathrm{Gr}}_d(\mathbb{R}^N)(P) \rightarrow \widetilde{\mathrm{Gr}}_d(\mathbb{R}^N)(Q)$  is well defined, and that the correspondences 
$P \mapsto \widetilde{\mathrm{Gr}}_d(\mathbb{R}^N)(P)$ and $f \mapsto f^*$ 
indeed define a functor of the form $\mathbf{PL}^{op}\rightarrow \mathbf{Sets}$. Also,
it is straightforward to prove that  $\widetilde{\mathrm{Gr}}_d(\mathbb{R}^N)$
satisfies all the requirements for being a quasi-PL space (see Definition \ref{quasiPL}).   

Before we proceed to prove the main results of this section, it is 
useful to first establish the following property of the underlying simplicial set 
$ \widetilde{\mathrm{Gr}}_d(\mathbb{R}^N)_{\bullet}$. 

\theoremstyle{plain} \newtheorem{germ.pathconn}[grassm]{Proposition}

\begin{germ.pathconn} \label{germ.pathconn}
The underlying simplicial set 
$\widetilde{\mathrm{Gr}}_d(\mathbb{R}^N)_{\bullet}$
is path-connected. 
\end{germ.pathconn}

\begin{proof}
Consider two arbitrary $0$-simplices $[W_0]$ and 
$[W_1]$ of $\widetilde{\mathrm{Gr}}_d(\mathbb{R}^N)_{\bullet}$.
To prove that $\widetilde{\mathrm{Gr}}_d(\mathbb{R}^N)_{\bullet}$ is 
path-connected, we shall construct a concordance 
$[\widetilde{W}] \in \widetilde{\mathrm{Gr}}_d(\mathbb{R}^N)([0,1])$
from $[W_0]$ to $[W_1]$. 
First, let $V_0$ and $V_1$ be open neighborhoods of the 
origin $\mathbf{0}$ in $\mathbb{R}^N$ such that 
$W_0 \in \Psi_d^{\circ}(V_0)_0$ and $W_1 \in \Psi_d^{\circ}(V_1)_0$. 
By shrinking $V_0$ and $V_1$ to a common open subspace 
(e.g., $V_0 \cap V_1$) we can assume that $V_0 = V_1$. From now
on, we will denote both $V_0$ and $V_1$ by $V$. Also, 
throughout this proof, we will denote the $d$-fold cartesian 
product $[-1,1]^d$ by $D^d$. Finally, to avoid any notational confusion,
we shall denote the origins of $\mathbb{R}^d$ and $\mathbb{R}^N$
by $\mathbf{0}_d$ and $\mathbf{0}_N$ respectively. 

Since $W_0$ 
and $W_1$ are PL submanifolds of $V$, we can find PL embeddings 
$f, g: D^d \hookrightarrow V$ such that 
$f(\mathbf{0}_d) = g(\mathbf{0}_d) = \mathbf{0}_N$,
$\mathrm{Im}\hspace{0.05cm}f \subseteq W_0$, and 
$\mathrm{Im}\hspace{0.05cm}g \subseteq W_1$. 
The first step in this proof is to show that the PL embeddings 
$f$ and $g$ are concordant; i.e., we will show that there 
is a PL embedding $F: [0,1] \times D^d \hookrightarrow [0,1]\times V$ satisfying the
following conditions: 

\begin{itemize}
\item[(i)] $F^{-1}\big(\{0\} \times V\big) = \{0\} \times D^d$ and 
$F^{-1}\big(\{1\} \times V\big) = \{1\} \times D^d$.

\item[(ii)] $\mathrm{pr}_2 \circ F \circ i_0 = f$ and $\mathrm{pr}_2 \circ F \circ i_1 = g$, where
$\mathrm{pr}_2: [0,1]\times V \rightarrow V$ is the obvious projection onto $V$, 
and $i_0$ (resp. $i_1$) is the inclusion $D^d \hookrightarrow [0,1] \times D^d$ defined by
$x \mapsto (0, x)$ (resp. $x \mapsto (1, x)$).
\end{itemize}

Note that we do not require 
$F$ to be level-preserving, except at $t = 0$ and $t=1$.  
A PL embedding $F$ satisfying the above conditions  
is called a \textit{concordance} from $f$ to $g$. 
Furthermore, we shall require $F$ to satisfy the following additional condition: 

\begin{itemize}

\item[(iii)] $F(t, \mathbf{0}_d) = (t, \mathbf{0}_N)$ for all $t \in [0,1]$. 

\end{itemize}

We will construct such a concordance $F: [0,1] \times D^d \hookrightarrow [0,1]\times V$
as follows:  

\begin{itemize}
\item[(1)] First, we define a PL map $G: [0,1] \times D^d \rightarrow [0,1]\times V$
(not necessarily an embedding)
satisfying  $G^{-1}\big(\{0\} \times V\big) = \{0\} \times D^d$,
$G^{-1}\big(\{1\} \times V\big) = \{1\} \times D^d$, $\mathrm{pr}_2 \circ G \circ i_0 = f$,
$\mathrm{pr}_2 \circ G \circ i_1 = g$, and 
$G(t, \mathbf{0}_d) = (t, \mathbf{0}_N)$ for all $t \in [0,1]$. In fact, we can
define $G$ so that it is level-wise constant near $t = 0$ and $t = 1$. 
To do this, start by fixing a value $\epsilon$ such that $0 < \epsilon < \frac{1}{2}$. 
We can define the PL map $G$ by first setting $G(t, x) = (t, f(x))$
for all $(t,x) \in [0,\epsilon] \times D^d$, 
$G(t, x) = (t, g(x))$ for all $(t, x) \in [1 - \epsilon, 1] \times  D^d$,
and $G(t, \mathbf{0}_d) = (t, \mathbf{0}_N)$ for all $t \in [0,1]$.
Then, we triangulate $[0, 1] \times D^d$ with 
a simplicial complex $K$ which also triangulates the PL subspace
\[
X_0 := \Big( [0,\epsilon] \times D^d\Big) \cup \Big( [1 - \epsilon, 1] \times D^d \Big) \cup \Big( [0,1] \times \{\mathbf{0}_d\}\Big),
\] 
and makes the map $G$ linear on each simplex of $K$ contained in $X_0$. 
We then extend $G$ linearly to all the remaining simplices of  $K$.  

\item[(2)] Now, recall that we are assuming that $N \geq 2d + 2$ 
(see Remark \ref{locally.flat}). This condition ensures that the PL map 
$G$ constructed in the previous step is PL homotopic to a PL
embedding $F: [0,1] \times D^d \hookrightarrow [0,1]\times V$
(see Corollary 4.4 in \cite{Br}). 
Since $G$ was already a PL embedding on the PL subspace
$X_0$, we
can take $F$ to be PL homotopic to $G$ relative to $X_0$. 
In fact, if $H_s$ is the PL homotopy from $F$ to $G$,
Corollary 4.4 in \cite{Br} tells us that we can choose 
$H_s$ so that, for all $s \in [0,1]$ and $(t,x)\in [0,1]\times D^d$, the distance between 
$G(t,x)$ and $H_s(t,x)$ is at most $\frac{\epsilon}{2}$. This last condition 
ensures that the resulting PL embedding $F: [0,1]\times D^d \hookrightarrow [0,1]\times V$
satisfies  $F^{-1}\big(\{0\} \times V\big) = \{0\} \times D^d$ and
$F^{-1}\big(\{1\} \times V\big) = \{1\} \times D^d$. Finally, since $H_s$ is a PL homotopy
relative to $X_0$,
we also have that $\mathrm{pr}_2 \circ F \circ i_0 = f$,
$\mathrm{pr}_2 \circ F \circ i_1 = g$, and 
$F(t, \mathbf{0}_d) = (t, \mathbf{0}_N)$ for all $t \in [0,1]$.
Therefore, $F$ is a concordance 
from $f$ to $g$ satisfying the additional condition (iii) listed above.  
\end{itemize}

As also indicated in Remark \ref{locally.flat}, the dimensions 
$N$ and $d$ satisfy $N - d \geq 3$. This bound on $N - d$ allows us 
to apply a result of Hudson that tells us that concordant embeddings 
are always ambient isotopic (in particular, isotopic), as long as we are working with codimension
at least 3 (see Theorem 1.1 in \cite{conhud}. However, Corollary 1.4 
of the same paper is a better fit for this proof). 
Consequently, by this result of Hudson, our PL embeddings $f, g: D^d \rightarrow V$
are isotopic. In other words,  
we can find a PL isotopy $\widetilde{F}: [0,1]\times D^d \rightarrow [0,1]\times V$ 
(i.e., a level-preserving PL embedding)
such that $\widetilde{F}_0 = f$ and $\widetilde{F}_1 = g$. 
Moreover, since we constructed the concordance $F$ so that $F(t, \mathbf{0}_d) = (t, \mathbf{0}_N)$ for any $t \in [0,1]$, 
we can also ensure that $\widetilde{F}_t(\mathbf{0}_d) = \mathbf{0}_N$ for all $t \in [0,1]$. 

Using this PL isotopy $\widetilde{F}: [0,1]\times D^d \rightarrow [0,1]\times V$, 
we can now construct the desired concordance $[\widetilde{W}]$ between 
the 0-simplices $[W_0]$ and $[W_1]$ of $\widetilde{\mathrm{Gr}}_d(\mathbb{R}^N)_{\bullet}$
that we fixed at the beginning of this proof. First,
let us denote the image $\widetilde{F}([0,1]\times D^d)$ by $\widehat{W}$. Evidently,
this $\widehat{W}$ is not an element of $\Psi_d^{\circ}(V)([0,1])$ because each fiber
of the standard projection $\pi: \widehat{W} \rightarrow [0,1]$ is a manifold with non-empty boundary. However,
we can pick a small enough open neighborhood $V'$ of $\mathbf{0}_N$ in $V$ so that 
the product $[0,1] \times V'$ is disjoint from the image
$\widetilde{F}([0,1]\times \partial \hspace{0.05cm}D^d)$. Then, by 
intersecting $\widehat{W}$ with $[0,1]\times V'$, we obtain an
element $\widetilde{W}$ of $\Psi_d^{\circ}(V')([0,1])$. 
Also, recall that $f(D^d) \subset W_0$ and $g(D^d) \subset W_1$. Therefore,
since $\widetilde{F}$ is an isotopy from $f$ to $g$, the element 
$\widetilde{W} \in \Psi_d^{\circ}(V')([0,1])$ is 
(after possibly shrinking $V'$) a concordance 
from $W_0\cap V'$ to $W_1\cap V'$, which are both elements 
of $\Psi_d^{\circ}(V')_0$. Since $W_0$ and
$W_1$ are germ equivalent to  
$W_0\cap V'$ and $W_1\cap V'$ respectively
(in the sense of Definition \ref{equivalence.germs}), 
it follows that the germ 
$[\widetilde{W}] \in \widetilde{\mathrm{Gr}}_d(\mathbb{R}^N)([0,1])$ 
of $\widetilde{W}$ is a concordance from $[W_0]$ to $[W_1]$,
which concludes the proof.
\end{proof}

Let $\mathcal{G}_N: \Psi_d^{\circ}(\mathbb{R}^N)_{\bullet} \rightarrow \widetilde{\mathrm{Gr}}_d(\mathbb{R}^N)_{\bullet}$ be the morphism of simplicial sets which sends a $p$-simplex $W$ to its germ $[W]$ near the origin. The following proposition is one of the results that we will use to establish that the inclusion map 
$\mathcal{J}_N: \mathrm{Gr}_d(\mathbb{R}^N)_{\bullet} \hookrightarrow \Psi_d^{\circ}(\mathbb{R}^N)_{\bullet}$ is a weak homotopy equivalence. 

\theoremstyle{plain} \newtheorem{germ.equivalence}[grassm]{Proposition}

\begin{germ.equivalence} \label{germ.equivalence}
The morphism of simplicial sets 
$\mathcal{G}_N: \Psi_d^{\circ}(\mathbb{R}^N)_{\bullet} \rightarrow \widetilde{\mathrm{Gr}}_d(\mathbb{R}^N)_{\bullet}$ 
is a weak homotopy equivalence.  
\end{germ.equivalence}

\begin{proof}

This proof will be broken down into the following two steps: 

\textit{\underline{Step 1}: $\mathcal{G}_N$ is a Kan fibration.} Consider any diagram of the form
\begin{equation} \label{diagram.germ.fib}
\xymatrix{ \Lambda^p_{i\bullet} \hspace{0.1cm} \ar@{^{(}->}[d] \ar[r] & 
\Psi_d^{\circ}(\mathbb{R}^N)_{\bullet} \ar[d]^{\mathcal{G}_N}  \\
\Delta^p_{\bullet} \ar[r] & \widetilde{\mathrm{Gr}}_d(\mathbb{R}^N)_{\bullet},
}
\end{equation}
where $\Lambda^p_{i\bullet}$ is the $i$-th horn of $\Delta^p_{\bullet} $. To show that $\mathcal{G}_N$ is a Kan fibration, we need to produce a lift $\Delta^p_{\bullet} \rightarrow \Psi_d^{\circ}(\mathbb{R}^N)_{\bullet}$. First, observe that (\ref{diagram.germ.fib})  can be represented with the following data: 

\begin{itemize}
\item[$\cdot$] The bottom map is defined by an element $W'$ of $\Psi_d^{\circ}(V)(\Delta^p)$, where $V$ is some suitable open neighborhood of the origin in $\mathbb{R}^N$. 

\item[$\cdot$] The top map is defined by an element $W$ of $\Psi_d^{\circ}(\mathbb{R}^N)(\Lambda^p_i)$, where $\Lambda^p_i$ is the $i$-th horn of the geometric simplex $\Delta^p$.
\end{itemize}

 By the commutativity of (\ref{diagram.germ.fib}),  we can assume (after shrinking $V$ if necessary)
 that the image of $W$ under the restriction map 
$r: \Psi_d^{\circ}(\mathbb{R}^N) \rightarrow \Psi_d^{\circ}(V)$ coincides with the element $W'_{\Lambda^p_i}$.  Now fix the following data: 

\begin{itemize}
\item[$\cdot$] An open PL embedding
\[
h: \Delta^p \times \mathbb{R}^d \rightarrow  W'
\]
which commutes with the projection onto $\Delta^p$ and preserves the zero-section. We can produce such an embedding by applying the Kuiper-Lashof Theorem (see Theorem \ref{kuiper.lashof}) to the microbundle  
\[
\xymatrix{ \Delta^p \hspace{0.1cm} \ar@{^{(}->}[r]^{\hspace{-0.1cm}s_{\mathbf{0}}} & 
W' \ar[r]^{\hspace{-0.1cm}\pi_{W'}} &  \Delta^p.
}
\]

\item[$\cdot$] A PL homeomorphism $f: \Lambda^p_i \times [0,1]\rightarrow \Delta^p$ such that $f(x,0) = x$ for any 
$x \in \Lambda^p_i$.  
\end{itemize}

Also, let $r: \Delta^p \rightarrow \Lambda^p_i$ be the PL retraction defined by $f(x,t) \mapsto x$,  let $D^d$ denote again the unit cube in $\mathbb{R}^d$, and let $h'$ be the restriction of $h$ on $\Delta^p\times D^d$.  Since $N- d\geq 3$ (see Remark \ref{locally.flat}), 
we can apply the Isotopy Extension Theorem (see \cite{Br}, Corollary 7.7) to produce an ambient isotopy
\[
H: \Delta^p \times \mathbb{R}^N \rightarrow  \Delta^p \times \mathbb{R}^N
\]
over $\Delta^p$ which preserves the zero-section  and has the following two properties: 

\begin{itemize}
\item[$\cdot$] $H$ is the identity map on $\Lambda^p_i\times \mathbb{R}^N$. 

 \item[$\cdot$] For any $(x,t)$ in $\Lambda^p_i \times [0,1]$, $H_{f(x,t)}\circ h'_{f(x,t)}$ is equal to $h'_x$, where $h'_{f(x,t)}$ and $h'_x$ are the values of the isotopy $h'$ over $f(x,t)$ and $x$ respectively. Similarly, $H_{f(x,t)}$ is the value of the ambient isotopy 
 $H$ over the point $f(x,t)$. 
\end{itemize}

Note that the pullback $r^*W$ of $W$ along the retraction $r: \Delta^p \rightarrow \Lambda^p_i$ contains the image 
$H\circ h'(\Delta^p \times D^d)$ as a PL subspace. 
Therefore, the image of $r^*W$ under the inverse $H^{-1}$, which is an element of $\Psi_d(\mathbb{R}^N)(\Delta^p)$, 
contains the image $h'(\Delta^p \times D^d)$. 
Finally, since $H^{-1}(r^*W)$ is closed in $\Delta^p\times \mathbb{R}^N$, we can find an open neighborhood $V'$ of the origin in $\mathbb{R}^N$ contained in $V$ such that the intersections of $H^{-1}(r^*W)$ and $W'$ with $\Delta^p \times V'$ are equal.
In other words, the element $H^{-1}(r^*W)$ defines a lift  $\Delta^p_{\bullet} \rightarrow \Psi_d^{\circ}(\mathbb{R}^N)_{\bullet}$ in (\ref{diagram.germ.fib}).

\textit{\underline{Step 2}: $\mathcal{G}_N$ has contractible fibers.}  Let $[\mathbb{R}^d]$ denote  
the element of $\widetilde{\mathrm{Gr}}_d(\mathbb{R}^N)_0$ induced by the image of the standard inclusion 
$\mathbb{R}^d \hookrightarrow \mathbb{R}^N$, and let $\widetilde{\mathrm{Gr}}_d(\mathbb{R}^N)_{[\mathbb{R}^d]}$ denote the subsimplicial set of $\widetilde{\mathrm{Gr}}_d(\mathbb{R}^N)_{\bullet}$ which consists of all the degeneracies of $[\mathbb{R}^d]$.
In this step, we will show that the pre-image of $\widetilde{\mathrm{Gr}}_d(\mathbb{R}^N)_{[\mathbb{R}^d]}$ under $\mathcal{G}_N$, which we will denote by $\mathcal{G}_N^{-1}([\mathbb{R}^d])_{\bullet}$, is contractible. 
Note that the contractibility of $\mathcal{G}_N^{-1}([\mathbb{R}^d])_{\bullet}$ would then imply 
that all fibers of $\mathcal{G}_N$ are contractible, given that $\mathcal{G}_N$ is a Kan fibration
by Step 1 and $\widetilde{\mathrm{Gr}}_d(\mathbb{R}^N)_{\bullet}$ is path-connected by Proposition \ref{germ.pathconn}.  
The base-point that we will consider in the pre-image $\mathcal{G}_N^{-1}([\mathbb{R}^d])_{\bullet}$ is the 0-simplex corresponding to the image of the inclusion $\mathbb{R}^d \hookrightarrow \mathbb{R}^N$. From now on, we will denote this base-point by $\ast_{\mathbb{R}^d}$. Choose now a $W$ in $\mathcal{G}_N^{-1}([\mathbb{R}^d])_p$ which represents an element of the group 
$\pi_p(\mathcal{G}_N^{-1}([\mathbb{R}^d])_{\bullet},\ast_{\mathbb{R}^d})$. In other words, $W$ is an element whose restriction $W_{\partial\Delta^p}$ over $\partial\Delta^p$ is equal to $\partial\Delta^p\times\mathbb{R}^d$ and
 whose germ near the section $\Delta^p \times \{ \mathbf{0}\}$ 
 coincides with $\Delta^p \times \mathbb{R}^d$.  
 By this last property, we can find a small enough $\epsilon >0$ so that 
 \[
 W\cap(\Delta^p\times D^N_{\epsilon}) = (\Delta^p\times\mathbb{R}^d)\cap(\Delta^p\times D^N_{\epsilon}),
 \]
where $D^N_{\epsilon}$ is the cube of radius $\epsilon$ in $\mathbb{R}^N$. By scanning the cube $D^N_{\epsilon}$, we can produce a concordance $\widehat{W}$ between $W$ and the product $\Delta^p\times \mathbb{R}^d$. Furthermore, this scanning procedure will not change any fibers of $W$ which were equal to $\mathbb{R}^d$, which implies that the restriction of $\widehat{W}$ over $\partial\Delta^p\times [0,1]$ coincides with the product 
$(\partial\Delta^p\times [0,1])\times \mathbb{R}^d$. Therefore, the element $W$ represents the trivial element in 
$\pi_p(\mathcal{G}_N^{-1}([\mathbb{R}^d])_{\bullet},\ast_{\mathbb{R}^d})$, and  it follows that 
$\mathcal{G}_N^{-1}([\mathbb{R}^d])_{\bullet}$ is contractible. 
\end{proof}

One of our goals in this section is to 
show that the underlying simplicial sets of the
PL Grassmannians  $\mathrm{Gr}_d(\mathbb{R}^N)$ and  $\widetilde{\mathrm{Gr}}_d(\mathbb{R}^N)$ are actually weak homotopy 
equivalent. To establish this, we need to give a brief discussion about spaces of PL automorphisms of $\mathbb{R}^N$. The following definitions that we will introduce can also be found in \cite{KL2}.

\theoremstyle{definition} \newtheorem{kl.convention}[grassm]{Convention}

\begin{kl.convention} \label{kl.convention}
To make it easier to formulate our definitions, we will borrow the following terminology from \cite{KL2}: Any PL automorphism $f:\Delta^p\times M \rightarrow \Delta^p\times M$ which commutes with the projection onto $\Delta^p$ will be called a \textit{PL bundle isomorphism}. Similarly, any PL embedding $g: \Delta^p\times V \rightarrow \Delta^p\times M$ commuting with the projection onto $\Delta^p$ will be called  a \textit{PL bundle monomorphism}.  

Also, in the following definitions, we will denote the image of the standard inclusion 
$\mathbb{R}^d \hookrightarrow \mathbb{R}^N$ simply by $\mathbb{R}^d$.  
\end{kl.convention}

\theoremstyle{definition} \newtheorem{automorphisms}[grassm]{Definition}

\begin{automorphisms} \label{automorphisms}
$\mathcal{H}(\mathbb{R}^N)$ is the simplicial group whose $p$-simplices are PL bundle isomorphisms $f: \Delta^p \times \mathbb{R}^N \rightarrow \Delta^p \times \mathbb{R}^N$ which preserve the zero-section. Also, $\mathcal{H}(\mathbb{R}^N,\mathbb{R}^d)$ will denote the simplicial subgroup of $\mathcal{H}(\mathbb{R}^N)$ whose $p$-simplices are the 
PL bundle isomorphisms $f:\Delta^p \times \mathbb{R}^N \rightarrow \Delta^p \times \mathbb{R}^N$ 
which preserve the subspace $\Delta^p\times \mathbb{R}^d$. 
\end{automorphisms}

We also need to introduce simplicial groups whose simplices are germs of automorphisms. To define these, we will use the following spaces of embeddings:  

\begin{itemize}
\item[$\cdot$] For any open neighborhood $V$ of the origin $\mathbf{0}$ in $\mathbb{R}^N$, $\mathcal{E}_V(\mathbb{R}^N)$ is the simplicial set where $p$-simplices are PL bundle monomorphisms $g: \Delta^p \times V \rightarrow \Delta^p\times \mathbb{R}^N$ which preserve the zero-section. 

\item[$\cdot$] $\mathcal{E}(\mathbb{R}^N)$ is the coproduct 
\[
\coprod_{V} \mathcal{E}_V(\mathbb{R}^N)
\]
where $V$ ranges over all possible open neighborhoods $V$ of the origin $\mathbf{0}$.

\item[$\cdot$]  Also,  $\mathcal{E}(\mathbb{R}^N, \mathbb{R}^d)$ will denote the subsimplicial set of 
$\mathcal{E}(\mathbb{R}^N)$  whose $p$-simplices are the embeddings $g: \Delta^p \times V \rightarrow \Delta^p\times \mathbb{R}^N$ which map $\Delta^p \times (V\cap \mathbb{R}^d)$ to $\Delta^p \times \mathbb{R}^d$.
\end{itemize}

The following simplicial group is what is commonly referred to as the structure group $\mathrm{PL}_n$ in the literature (see for example \cite{KL2}).   

\theoremstyle{definition} \newtheorem{germ.automorphisms}[grassm]{Definition}

\begin{germ.automorphisms} \label{germ.automorphisms}

$\mathrm{PL}(\mathbb{R}^N)$ is the quotient of $\mathcal{E}(\mathbb{R}^N)$ obtained by identifying two embeddings 
$f_i:\Delta^p \times V_i\rightarrow \Delta^p\times\mathbb{R}^N$, $i=1,2$, if there exists an open neighborhood $V_3$ of $\mathbf{0}$ in $V_1\cap V_2$ such that 
$f_1|_{\Delta^p\times V_3} = f_2|_{\Delta^p\times V_3}$. Similarly, $\mathrm{PL}(\mathbb{R}^N, \mathbb{R}^d)$ is the quotient of $\mathcal{E}(\mathbb{R}^N, \mathbb{R}^d)$ obtained by performing the same identifications that we did for $\mathrm{PL}(\mathbb{R}^N)$.  

\end{germ.automorphisms}

\theoremstyle{definition} \newtheorem{PL.group}[grassm]{Remark}

\begin{PL.group} \label{PL.group}
The reason why $\mathrm{PL}(\mathbb{R}^N)$ is actually a simplicial group, and not just a simplicial set, is because we are working with germs of embeddings. For any monomorphism $f$ in  $\mathcal{E}_V(\mathbb{R}^N)$, we will denote its equivalence class in $\mathrm{PL}(\mathbb{R}^N)$ by $[f]$. If we take two  $p$-simplices $f_1$ and $f_2$ in $\mathcal{E}_{V_1}(\mathbb{R}^N)$ and $\mathcal{E}_{V_2}(\mathbb{R}^N)$ respectively, then the product $[f_2]\cdot [f_1]$ is defined as the equivalence class $[f_2\circ (f_1|_{\Delta^p\times V_3})]$, where $V_3$ is an open neighborhood of the origin contained  in $V_1$, chosen to be small enough so that $f_1(\Delta^p\times V_3) \subseteq \Delta^p\times V_2$. Also, given any $p$-simplex $f$ of $\mathcal{E}_{V}(\mathbb{R}^N)$, the inverse of $[f]$ will be the equivalence class defined by $f^{-1}|_{\Delta^p \times V'}$, where $V'$ is an open neighborhood of the origin in $\mathbb{R}^N$ such that 
$\Delta^p\times V' \subseteq \mathrm{Im}\hspace{0.05cm}f$. 
\end{PL.group}

\theoremstyle{definition} \newtheorem{pre.quotient.grassmannian}[grassm]{Note}

\begin{pre.quotient.grassmannian} \label{pre.quotient.grassmannian}

Recall that we can define the smooth Grassmannian as the quotient $O(N)/O(d)\times O(N-d)$. In the next proposition, we show that our PL Grassmannians $\mathrm{Gr}_d(\mathbb{R}^N)_{\bullet}$ and $\widetilde{\mathrm{Gr}}_d(\mathbb{R}^N)_{\bullet}$ admit similar descriptions in terms of PL automorphisms. Before we state this result, we need to introduce the following maps: 

(1) \quad Let $\mathcal{F}_N: \mathcal{H}(\mathbb{R}^N) \rightarrow \mathrm{Gr}_d(\mathbb{R}^N)_{\bullet}$ be the simplicial set map which sends a $p$-simplex $f$ of $\mathcal{H}(\mathbb{R}^N)$ to the image $f(\Delta^p\times \mathbb{R}^d)$.  

(2) \quad Also, we define a simplicial set map  
$\widetilde{\mathcal{F}}_N: \mathrm{PL}(\mathbb{R}^N) \rightarrow \widetilde{\mathrm{Gr}}_d(\mathbb{R}^N)_{\bullet}$ as follows:

\begin{itemize}
\item[$\cdot$] First, we claim that any $g \in \mathcal{E}_V(\mathbb{R}^N)$ induces canonically an element in $\widetilde{\mathrm{Gr}}_d(\mathbb{R}^N)_p$.
To see this, fix a bundle monomorphism $g:\Delta^p \times V \rightarrow \Delta^p \times \mathbb{R}^N$, where $V$ is an open set in $\mathbb{R}^N$ containing $\mathbf{0}$.
By the compactness of $\Delta^p$, we can find an open neighborhood $V'$ of $\mathbf{0}$ in $\mathbb{R}^N$ such that 
$\Delta^p \times V' \subset g(\Delta^p \times V)$. 
Since $g:\Delta^p \times V \rightarrow \Delta^p \times \mathbb{R}^N$ is a PL embedding, 
it is not hard to show that the intersection of $\Delta^p \times V'$ with
\[
g\big(\Delta^p\times(V \cap \mathbb{R}^d)\big)
\]
is a $p$-simplex of $\Psi_d^{\circ}(V')_{\bullet}$, which we shall denote by $W^{g}$. Then, we obtain an element 
in  $\widetilde{\mathrm{Gr}}_d(\mathbb{R}^N)_p$ by taking the germ $[W^{g}]$. Note that this germ does not depend on the
open neighborhood $V'$ we chose.   

\item[$\cdot$] Next, we define a simplicial set map 
$\widetilde{\mathcal{F}}_N^{\mathcal{E}}: \mathcal{E}(\mathbb{R}^N) \rightarrow  \widetilde{\mathrm{Gr}}_d(\mathbb{R}^N)_{\bullet}$
by setting $\widetilde{\mathcal{F}}_N^{\mathcal{E}}(g) = [W^g]$. 
Recall that $\mathcal{E}(\mathbb{R}^N)$ is the coproduct $\coprod_{V} \mathcal{E}_V(\mathbb{R}^N)$. 

\item[$\cdot$] Finally, it is straightforward to verify that two bundle monomorphisms $g_1, g_2 \in \mathcal{E}(\mathbb{R}^N)_p$ with the same germ 
near $\Delta^p \times \{ \mathbf{0}\}$ induce the same element in  $\widetilde{\mathrm{Gr}}_d(\mathbb{R}^N)_p$, i.e., $[W^{g_1}] = [W^{g_2}]$. Therefore, 
the map $\widetilde{\mathcal{F}}_N^{\mathcal{E}}: \mathcal{E}(\mathbb{R}^N) \rightarrow  \widetilde{\mathrm{Gr}}_d(\mathbb{R}^N)_{\bullet}$ 
factors through $\mathrm{PL}(\mathbb{R}^N)$, and we define 
$\widetilde{\mathcal{F}}_N: \mathrm{PL}(\mathbb{R}^N) \rightarrow \widetilde{\mathrm{Gr}}_d(\mathbb{R}^N)_{\bullet}$
to be the map induced by $\widetilde{\mathcal{F}}_N^{\mathcal{E}}$. 
\end{itemize}
\end{pre.quotient.grassmannian}

\theoremstyle{plain} \newtheorem{quotient.grassmannian}[grassm]{Proposition}

\begin{quotient.grassmannian} \label{quotient.grassmannian}

The maps $\mathcal{F}_N: \mathcal{H}(\mathbb{R}^N) \rightarrow \mathrm{Gr}_d(\mathbb{R}^N)_{\bullet}$ and 
$\widetilde{\mathcal{F}}_N: \mathrm{PL}(\mathbb{R}^N) \rightarrow \widetilde{\mathrm{Gr}}_d(\mathbb{R}^N)_{\bullet}$ induce isomorphisms 
\[
\mathcal{H}(\mathbb{R}^N)/\mathcal{H}(\mathbb{R}^N, \mathbb{R}^d) \stackrel{\cong}{\longrightarrow} \mathrm{Gr}_d(\mathbb{R}^N)_{\bullet} \qquad \mathrm{PL}(\mathbb{R}^N)/\mathrm{PL}(\mathbb{R}^N,\mathbb{R}^d)  \stackrel{\cong}{\longrightarrow} \widetilde{\mathrm{Gr}}_d(\mathbb{R}^N)_{\bullet}.
\]
\end{quotient.grassmannian}

\begin{proof}
To prove that the induced map $\mathcal{H}(\mathbb{R}^N)/\mathcal{H}(\mathbb{R}^N, \mathbb{R}^d)\rightarrow \mathrm{Gr}_d(\mathbb{R}^N)_{\bullet}$ is an isomorphism, it suffices to observe that two maps 
$f_1, \hspace{0.05cm} f_2: \Delta^p\times \mathbb{R}^N \rightarrow \Delta^p \times \mathbb{R}^N$ have the same image when restricted to $\Delta^p\times \mathbb{R}^d$ if and only if $f_2^{-1}\circ f_1$ is a $p$-simplex of $\mathcal{H}(\mathbb{R}^N, \mathbb{R}^d)$. Similarly, it is straightforward to verify that two $p$-simplices 
$[f_1]$, $[f_2]$ of $\mathrm{PL}(\mathbb{R}^N)$ have the same image in 
$\widetilde{\mathrm{Gr}}_d(\mathbb{R}^N)_p$ under $\widetilde{\mathcal{F}}_N$ if and only if the product $[f_2]^{-1}\cdot [f_1]$ is a $p$-simplex of 
$\mathrm{PL}(\mathbb{R}^N,\mathbb{R}^d)$. 
\end{proof}

Recall that $\mathcal{G}_N: \Psi_d^{\circ}(\mathbb{R}^N)_{\bullet} \rightarrow \widetilde{\mathrm{Gr}}_d(\mathbb{R}^N)_{\bullet}$ is the map which sends a $p$-simplex $W$ to its germ near the origin. To avoid introducing new notation, we will also denote  the restriction of this map on $\mathrm{Gr}_d(\mathbb{R}^N)_{\bullet}$ by $\mathcal{G}_N$. Now consider the diagram of simplicial sets
\begin{equation} \label{principal.map}
\xymatrix{ \mathcal{H}(\mathbb{R}^N, \mathbb{R}^d) \ar[d]  \hspace{0.1cm} \ar@{^{(}->}[r]  &  
\mathcal{H}(\mathbb{R}^N) \ar[r]^{\hspace{-0.2cm}\mathcal{F}_N} \ar[d] &
\mathrm{Gr}_d(\mathbb{R}^N)_{\bullet} \ar[d]^{\mathcal{G}_N} \\
 \mathrm{PL}(\mathbb{R}^N, \mathbb{R}^d)  \hspace{0.1cm} \ar@{^{(}->}[r] & 
 \mathrm{PL}(\mathbb{R}^N) \ar[r]^{\hspace{-0.2cm}\widetilde{\mathcal{F}}_N} &  
 \widetilde{\mathrm{Gr}}_d(\mathbb{R}^N)_{\bullet},
}
\end{equation}
where the first and second vertical maps are the obvious quotient maps. The next step is to prove that 
the map $\mathcal{G}_N: \mathrm{Gr}_d(\mathbb{R}^N)_{\bullet} \rightarrow \widetilde{\mathrm{Gr}}_d(\mathbb{R}^N)_{\bullet}$ 
is a weak homotopy equivalence. 
Note that by Proposition \ref{quotient.grassmannian}, we have that the top and bottom compositions in (\ref{principal.map}) are simplicial principal bundles. Therefore,  since (\ref{principal.map}) commutes, the next lemma implies that $\mathcal{G}_N: \mathrm{Gr}_d(\mathbb{R}^N)_{\bullet} \rightarrow \widetilde{\mathrm{Gr}}_d(\mathbb{R}^N)_{\bullet}$ is a weak homotopy equivalence.  

\theoremstyle{plain} \newtheorem{KuiperLemma}[grassm]{Lemma}

\begin{KuiperLemma} \label{KuiperLemma}

The quotient maps $\mathcal{H}(\mathbb{R}^N) \rightarrow \mathrm{PL}(\mathbb{R}^N)$ and 
$\mathcal{H}(\mathbb{R}^N, \mathbb{R}^d) \rightarrow \mathrm{PL}(\mathbb{R}^N, \mathbb{R}^d)$
are weak homotopy equivalences. 
\end{KuiperLemma}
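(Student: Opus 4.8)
The statement is that the germ-quotient maps $\mathcal{H}(\mathbb{R}^N) \to \mathrm{PL}(\mathbb{R}^N)$ and $\mathcal{H}(\mathbb{R}^N,\mathbb{R}^d) \to \mathrm{PL}(\mathbb{R}^N,\mathbb{R}^d)$ are weak equivalences. I would treat both simultaneously, since the argument for the pair $(\mathbb{R}^N,\mathbb{R}^d)$ is formally identical once one keeps track of the extra condition of preserving $\mathbb{R}^d$; so let me describe the argument for $\mathcal{H}(\mathbb{R}^N) \to \mathrm{PL}(\mathbb{R}^N)$. The key point is that a germ of a PL bundle monomorphism $g\colon \Delta^p\times V \to \Delta^p\times\mathbb{R}^N$ near the zero-section can be ``straightened out'' to an honest PL bundle automorphism of $\Delta^p\times\mathbb{R}^N$, coherently enough to give a homotopy inverse. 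This is exactly the kind of statement that the Isotopy Extension Theorem is built to produce, and it is morally the PL analogue of the elementary fact that $GL_N(\mathbb{R})$ deformation retracts onto $O(N)$ or that germs of diffeomorphisms at a point are homotopy equivalent to their linearizations.

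\textbf{Step 1: reduce to surjectivity on homotopy groups plus a lifting property.} Both $\mathcal{H}(\mathbb{R}^N)$ and $\mathrm{PL}(\mathbb{R}^N)$ are Kan (they are simplicial groups). The map $q\colon \mathcal{H}(\mathbb{R}^N) \to \mathrm{PL}(\mathbb{R}^N)$ is a surjective homomorphism of simplicial groups, hence a Kan fibration; its fibre over the identity is the subgroup $\mathcal{H}_0(\mathbb{R}^N)$ of automorphisms that are the identity on a neighbourhood of the zero-section. So it suffices to prove that $\mathcal{H}_0(\mathbb{R}^N)$ is weakly contractible, and then that $q$ is surjective on $\pi_0$ (automatic, since $q$ is already surjective on simplices). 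Wait --- more carefully: $q$ is a Kan fibration with fibre $\mathcal{H}_0(\mathbb{R}^N)$, so the long exact sequence of the fibration reduces everything to showing $\pi_k(\mathcal{H}_0(\mathbb{R}^N)) = 0$ for all $k$. An element of $\pi_k$ is represented by a PL bundle automorphism $F\colon \Delta^k\times\mathbb{R}^N \to \Delta^k\times\mathbb{R}^N$, equal to the identity on $\partial\Delta^k\times\mathbb{R}^N$ and on $\Delta^k\times V$ for some neighbourhood $V$ of $\{\mathbf 0\}$, and I must produce a PL bundle automorphism of $\Delta^{k+1}\times\mathbb{R}^N$ realising a nullhomotopy, still trivial near the zero-section throughout.

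\textbf{Step 2: the Alexander-trick/radial-rescaling nullhomotopy.} The standard move here is to conjugate $F$ by a radial expansion of $\mathbb{R}^N$: for $s\in(0,1]$ let $\mu_s\colon\mathbb{R}^N\to\mathbb{R}^N$ be scaling by $1/s$ (fibrewise over $\Delta^k$), and set $F^{(s)} = \mu_s^{-1}\circ F\circ\mu_s$. As $s\to 0$, since $F$ is the identity on $\Delta^k\times V$, the automorphisms $F^{(s)}$ agree with the identity on larger and larger cubes and converge to the identity; at $s=1$ we recover $F$. The family $(s,F)\mapsto F^{(s)}$, together with setting $F^{(0)}=\mathrm{Id}$, is continuous and PL, and is the identity on $\partial\Delta^k$ throughout because $F$ is. This gives the required nullhomotopy of $F$ inside $\mathcal{H}_0(\mathbb{R}^N)$, so $\pi_k(\mathcal{H}_0(\mathbb{R}^N))=0$. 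For the relative version $\mathcal{H}(\mathbb{R}^N,\mathbb{R}^d)\to\mathrm{PL}(\mathbb{R}^N,\mathbb{R}^d)$ one observes that radial rescaling preserves $\mathbb{R}^d$, so exactly the same nullhomotopy works, and the fibre $\mathcal{H}_0(\mathbb{R}^N,\mathbb{R}^d)$ (the identity near the zero-section, preserving $\mathbb{R}^d$) is likewise weakly contractible.

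\textbf{Main obstacle.} The genuine work is in Step 1, namely verifying that $q$ really is a Kan fibration with the named fibre: given a germ class $[g]$ represented by $g\colon\Delta^p\times V\to\Delta^p\times\mathbb{R}^N$ with a partial lift over a horn $\Lambda^p_i$ to honest automorphisms of $\Delta^p\times\mathbb{R}^N$, one must extend to an honest automorphism over all of $\Delta^p$ realising the given germ. This is where the Isotopy Extension Theorem (\cite{Br}, Corollary 7.7, already invoked in the proof of Proposition \ref{germ.equivalence}) enters: one uses it to extend the bundle monomorphism $g$ --- after restricting to a compact sub-cube $\Delta^p\times D^N$ --- to an ambient PL bundle automorphism of $\Delta^p\times\mathbb{R}^N$ rel $\partial$, matching the prescribed automorphisms on the faces in the horn. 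The codimension hypotheses do not intervene here (we are extending codimension-zero embeddings of $\mathbb{R}^N$ inside $\mathbb{R}^N$), so the isotopy extension applies without the $N-d\geq 3$ constraint; the care needed is purely in the bookkeeping of parameters over $\Delta^p$ and the compatibility with the faces. Once that fibration statement is in hand, Step 2 is routine.
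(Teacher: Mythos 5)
The paper does not actually prove this lemma from scratch: it cites Lemma 1.6 of Kuiper--Lashof \cite{KL2} for the first map and asserts that the relative case follows by the same argument. You are therefore attempting something more ambitious, and your Step 1 is sound in outline: $q$ is a homomorphism of simplicial groups, so once levelwise surjectivity is known it is a Kan fibration with fibre the kernel $\mathcal{H}_0(\mathbb{R}^N)$ of automorphisms equal to the identity near the zero-section, and everything reduces to the weak contractibility of that kernel. Be aware, though, that levelwise surjectivity --- every germ of a parametrized bundle monomorphism is the germ of an honest bundle automorphism --- is already essentially the Kuiper--Lashof representation theorem in families; your ``Main obstacle'' paragraph is carrying considerably more weight than bookkeeping.

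The genuine gap is in Step 2. First, the conjugation is written backwards: with $\mu_s(y)=y/s$ one gets $\mu_s^{-1}\circ F\circ\mu_s(y)=sF(y/s)$, which is the identity exactly on $sV$, a \emph{shrinking} neighbourhood of $\mathbf{0}$; these conjugates do not converge to the identity and do not lie in the kernel over any fixed neighbourhood. Second, and fatally for the argument as written, even the corrected family $G_s(y)=(1/s)F(sy)$ is \emph{not piecewise linear} jointly in $(s,y)$. Take $N=1$, $F(x)=x$ for $|x|\le1$ and $F(x)=2x-1$ for $x\ge1$: for fixed $y>1$ one has $G_s(y)=2y-1/s$ on the range $s\ge 1/y$, which is not PL in $s$, and the crease locus of the total map is the hyperbola $sy=1$, which is not a subpolyhedron. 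So the putative nullhomotopy is not a morphism of the PL category and does not define a simplex of the simplicial group $\mathcal{H}_0(\mathbb{R}^N)$. This is exactly the point where PL diverges from TOP (where radial rescaling is continuous and the argument works). The step can be repaired --- conjugate instead by a genuine PL isotopy $\Phi$ of $[0,1)\times\mathbb{R}^N$ over $[0,1)$ whose time-$t$ stages expand $V$ so as to exhaust $\mathbb{R}^N$ as $t\to 1$, so that $\Phi\circ(\mathrm{id}\times F)\circ\Phi^{-1}$ is a composite of PL maps, and then check PL-ness at $t=1$ by observing that the family is eventually the identity near every point of $\{1\}\times\mathbb{R}^N$ --- but that substitution is the real content of the step, not a routine verification, and your proof asserts PL-ness of a map that is not PL.
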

\begin{proof}
The arguments for proving that the quotient map $\mathcal{H}(\mathbb{R}^N) \rightarrow \mathrm{PL}(\mathbb{R}^N)$ is a weak homotopy equivalence 
are given in  \cite{KL1} and \cite{KL2}. 
Let us give an overview of the main steps of this proof. From now on,  
we will denote the quotient map $\mathcal{H}(\mathbb{R}^N) \rightarrow \mathrm{PL}(\mathbb{R}^N)$ by $\gamma$.
We point out to the reader that,
in \cite{KL2}, simplicial sets and simplicial groups are called \textit{semi-simplicial sets}
and  \textit{semi-simplicial groups} respectively:  

\begin{itemize}
\item[(i)] First, we will show that the quotient map 
$\gamma: \mathcal{H}(\mathbb{R}^N) \rightarrow \mathrm{PL}(\mathbb{R}^N)$
is surjective. Fix then a $p$-simplex $[f]$ of $\mathrm{PL}(\mathbb{R}^N)$. Without loss
of generality, we can suppose that $f$ is a PL embedding over $\Delta^p$
of the form $f: \Delta^p\times B(\mathbf{0}, \epsilon') \rightarrow \Delta^p \times \mathbb{R}^N$,
where $\epsilon' > 0$ is some suitable positive value and 
$B(\mathbf{0}, \epsilon')$ is the open ball of radius $\epsilon'$ centered 
at the origin $\mathbf{0} \in \mathbb{R}^N$.
Now, fix any value $\epsilon$ such that $0 < \epsilon < \epsilon'$. 
Using Lemma 3' (PL) of \cite{KL1} (see page 12 of that article), we can find
a PL homeomorphism $g: \Delta^p \times \mathbb{R}^N  \rightarrow \Delta^p \times \mathbb{R}^N$
over $\Delta^p$ such that 
\begin{equation} \label{KLgerm}
g|_{\Delta^p\times B(\mathbf{0}, \epsilon)} = f|_{\Delta^p\times B(\mathbf{0}, \epsilon)}.
\end{equation}
It follows that the germ $[g]$ is equal to $[f]$. In other words, 
$\gamma(g) = [f]$, and we can conclude
that $\gamma$ is surjective.   
 
\item[(ii)] Next, let $\mathcal{H}^{N(\mathbf{0})}(\mathbb{R}^N)$ be the 
simplicial subgroup of $\mathcal{H}(\mathbb{R}^N)$ whose $p$-simplices 
are all PL homeomorphisms $f:\Delta^p \times \mathbb{R}^N\rightarrow \Delta^p\times \mathbb{R}^N$
over $\Delta^p$
which leave a neighborhood of the zero-section fixed (this simplicial subgroup is a special case
of a more general construction given on page 1 of \cite{KL2}). 
We can define a free right $\mathcal{H}^{N(\mathbf{0})}(\mathbb{R}^N)$-action on 
$\mathcal{H}(\mathbb{R}^N)$ by setting $f\cdot g := f\circ g$ 
for any $p$-simplices $f$ and $g$ in 
$\mathcal{H}(\mathbb{R}^N)$ and $\mathcal{H}^{N(\mathbf{0})}(\mathbb{R}^N)$ respectively.
It is not hard to show that
the quotient map  $\gamma: \mathcal{H}(\mathbb{R}^N) \rightarrow \mathrm{PL}(\mathbb{R}^N)$
factors through the simplicial set of orbits 
$\mathcal{H}(\mathbb{R}^N)/\mathcal{H}^{N(\mathbf{0})}(\mathbb{R}^N)$, and that the induced map
$\mathcal{H}(\mathbb{R}^N)/\mathcal{H}^{N(\mathbf{0})}(\mathbb{R}^N)\rightarrow \mathrm{PL}(\mathbb{R}^N)$
is an isomorphism. Therefore, the quotient map $\gamma$ is a principal 
$\mathcal{H}^{N(\mathbf{0})}(\mathbb{R}^N)$-bundle.  

\item[(iii)] Finally, in Lemma 1.6 of \cite{KL2}, it is proven that $\mathcal{H}^{N(\mathbf{0})}(\mathbb{R}^N)$
is contractible. It now follows that the quotient map $\gamma$ is a weak homotopy equivalence. 
\end{itemize}

We can repeat the argument given above to show that the quotient map 
$\mathcal{H}(\mathbb{R}^N, \mathbb{R}^d) \rightarrow \mathrm{PL}(\mathbb{R}^N, \mathbb{R}^d)$
(which we shall denote by $\widetilde{\gamma}$) is also a weak homotopy equivalence.  
First, as explained in \cite{KL1}, Lemma 3' (PL) also holds for PL automorphisms which preserve PL subspaces of $\mathbb{R}^N$
determined by equations of the form $x_i = 0$. Thus, if one starts with a PL embedding 
$f: \Delta^p\times B(\mathbf{0}, \epsilon') \rightarrow \Delta^p \times \mathbb{R}^N$ which maps 
$\Delta^p\times (B(\mathbf{0}, \epsilon')\cap \mathbb{R}^d)$ to $\Delta^p \times \mathbb{R}^d$, 
one can use Lemma 3' (PL) of \cite{KL1} to find 
a PL homeomorphism $g: \Delta^p \times \mathbb{R}^N  \rightarrow \Delta^p \times \mathbb{R}^N$
over $\Delta^p$ which preserves $\Delta^p \times \mathbb{R}^d$ and satisfies (\ref{KLgerm}). 
Therefore, the quotient map $\widetilde{\gamma}$ is surjective.
Next, we can define a simplicial subgroup 
$\mathcal{H}^{N(\mathbf{0})}(\mathbb{R}^N, \mathbb{R}^d)$ of $\mathcal{H}(\mathbb{R}^N, \mathbb{R}^d)$
analogous to $\mathcal{H}^{N(\mathbf{0})}(\mathbb{R}^N)$, and prove
that the quotient map $\widetilde{\gamma}$ is a principal 
$\mathcal{H}^{N(\mathbf{0})}(\mathbb{R}^N, \mathbb{R}^d)$-bundle
by repeating the argument given in step (ii) above. Finally, 
the proof given in Lemma 1.6 of \cite{KL2} works verbatim to 
show that $\mathcal{H}^{N(\mathbf{0})}(\mathbb{R}^N, \mathbb{R}^d)$ 
is contractible. Therefore, the quotient map 
$\widetilde{\gamma}: \mathcal{H}(\mathbb{R}^N, \mathbb{R}^d) \rightarrow \mathrm{PL}(\mathbb{R}^N, \mathbb{R}^d)$
is also a weak homotopy equivalence.
\end{proof}

By the discussion preceding this lemma, we have the following. 

\theoremstyle{plain} \newtheorem{equivalence.grass}[grassm]{Proposition}

\begin{equivalence.grass} \label{equivalence.grass}

The map $\mathcal{G}_N: \mathrm{Gr}_d(\mathbb{R}^N)_{\bullet} \rightarrow \widetilde{\mathrm{Gr}}_d(\mathbb{R}^N)_{\bullet}$ is a weak homotopy equivalence.
\end{equivalence.grass}

Finally, by combining Propositions \ref{equivalence.grass} and \ref{germ.equivalence}, we obtain the result that we promised earlier in this section. 

\theoremstyle{plain} \newtheorem{htpy.type.grass}[grassm]{Corollary}

\begin{htpy.type.grass} \label{htpy.type.grass}
The inclusion $\mathcal{J}_N: \mathrm{Gr}_d(\mathbb{R}^N)_{\bullet} \hookrightarrow \Psi_d^{\circ}(\mathbb{R}^N)_{\bullet}$ is a weak homotopy equivalence. 
\end{htpy.type.grass}

\begin{proof}
Clearly, the germ map $\mathcal{G}_N: \mathrm{Gr}_d(\mathbb{R}^N)_{\bullet} \rightarrow \widetilde{\mathrm{Gr}}_d(\mathbb{R}^N)_{\bullet}$ factors through $\Psi_d^{\circ}(\mathbb{R}^N)_{\bullet}$ as follows: 
\[
\xymatrix{ \mathrm{Gr}_d(\mathbb{R}^N)_{\bullet} \hspace{0.1cm} \ar@{^{(}->}[r]^{\mathcal{J}_N} & 
\Psi_d^{\circ}(\mathbb{R}^N)_{\bullet} \ar[r]^{\hspace{-0.15cm}\mathcal{G}_N} & 
\widetilde{\mathrm{Gr}}_d(\mathbb{R}^N)_{\bullet}.
}
\]
Propositions \ref{germ.equivalence} and \ref{htpy.type.grass} then imply that the inclusion $\mathcal{J}_N$ is a weak homotopy equivalence.    
\end{proof}

\theoremstyle{definition} \newtheorem{germ.pathconn2}[grassm]{Remark}

\begin{germ.pathconn2} \label{germ.pathconn2}
Recall that  we proved in Proposition \ref{germ.pathconn} that 
$\widetilde{\mathrm{Gr}}_d(\mathbb{R}^N)_{\bullet}$ is path-connected. Then, it follows from 
Proposition \ref{equivalence.grass} and Corollary \ref{htpy.type.grass}
that $\mathrm{Gr}_d(\mathbb{R}^N)_{\bullet}$ and $\Psi_d^{\circ}(\mathbb{R}^N)_{\bullet}$
are also path-connected.
\end{germ.pathconn2}

\subsection{Spaces of manifolds with normal data} \label{sec.normal.data}

For the proof of the main theorem, we will use the following enhancement of the quasi-PL space $\Psi_d^{\circ}(\mathbb{R}^N)$, where manifolds come equipped with \textit{normal} data near the origin.  

\theoremstyle{definition} \newtheorem{psi.normal}[grassm]{Definition}

\begin{psi.normal} \label{psi.normal}
$\Psi_d^{\hat{\circ}}(\mathbb{R}^N)$ is the quasi-PL space defined as follows:

\begin{itemize}
\item[$\cdot$] For an arbitrary PL space $P$, the elements of the set 
$\Psi_d^{\hat{\circ}}(\mathbb{R}^N)(P)$ are all possible tuples $(W,U)$, 
with $W$  an element of $\Psi_d^{\circ}(\mathbb{R}^N)(P)$ and $U$ a neighborhood of the zero-section 
$P\times \{\mathbf{0}\}$ in $P\times \mathbb{R}^N$ such that $(U, W\cap U)$ is a piecewise linear 
$(\mathbb{R}^N,\mathbb{R}^d)$-bundle.

\item[$\cdot$]  The function 
$f^*: \Psi_d^{\hat{\circ}}(\mathbb{R}^N)(P) \rightarrow \Psi_d^{\hat{\circ}}(\mathbb{R}^N)(Q)$ 
induced by a piecewise linear map $f: Q \rightarrow P$ is defined by taking pull-backs.   
\end{itemize}
\end{psi.normal}

Consider the forgetful map $\mathcal{F}:  \Psi_d^{\hat{\circ}}(\mathbb{R}^N)_{\bullet} \rightarrow \Psi_d^{\circ}(\mathbb{R}^N)_{\bullet}$ given by $(W,U) \mapsto W$. It is a direct consequence of Theorem \ref{kuiper.lashof2} 
in Appendix \ref{sec.pl.micro}
that this forgetful map is surjective. In fact, 
we can prove the following.

\theoremstyle{plain} \newtheorem{forget.normal}[grassm]{Proposition}

\begin{forget.normal} \label{forget.normal}
The forgetful map $\mathcal{F}:  \Psi_d^{\hat{\circ}}(\mathbb{R}^N)_{\bullet} \rightarrow \Psi_d^{\circ}(\mathbb{R}^N)_{\bullet}$ is a weak homotopy equivalence. Consequently, the inclusion $ \mathrm{Gr}_d(\mathbb{R}^N)_{\bullet} \hookrightarrow \Psi_d^{\hat{\circ}}(\mathbb{R}^N)_{\bullet}$ which takes any $p$-simplex $W$ to $(W, \Delta^p \times \mathbb{R}^N)$ is also a weak homotopy equivalence.  
\end{forget.normal}

\begin{proof}
Consider any commutative diagram of the form
\begin{equation} \label{forget.normal.diag}
\xymatrix{ \partial\Delta^p_{\bullet} \hspace{0.1cm} \ar@{^{(}->}[d] \ar[r] & 
\Psi_d^{\hat{\circ}}(\mathbb{R}^N)_{\bullet} \ar[d]^{\mathcal{F}}  \\
\Delta^p_{\bullet} \ar[r] & \Psi_d^{\circ}(\mathbb{R}^N)_{\bullet}.
}
\end{equation}
Let $W$ be the $p$-simplex classified by the bottom map in this diagram. Since this diagram commutes, there is an open neighborhood $U$ of the zero-section $\partial\Delta^p\times\{\mathbf{0}\}$  in $\partial\Delta^p \times \mathbb{R}^N$
so that $(U, W_{\partial\Delta^p}\cap U)$ is an element of the set $\Psi_d^{\hat{\circ}}(\mathbb{R}^N)(\partial\Delta^p)$.   
Also, we can assume that $W$ is \textit{constant near $\partial\Delta^p$} in the following sense: Fix a piecewise linear collar $c: \partial\Delta^p\times [0,1] \rightarrow \Delta^p$.  
Using $c$, we can define a piecewise linear map   $\widetilde{c}: \Delta^p \rightarrow \Delta^p$ which maps the closure of the complement of $c(\partial\Delta^p\times [0,1])$ onto $\Delta^p$, and which maps any point of the form $c(\lambda, s)$ to $\lambda$.
Then, for any value $s \in [0,1]$, the restriction of the pull-back 
$\widetilde{c}^*W$ over $c(\partial\Delta^p\times \{s\})$ is a translation of the restriction $W_{\partial\Delta^p}$. Equivalently, we can view the restriction of $\widetilde{c}^*W$ over   $c(\partial\Delta^p\times [0,1])$ as a constant concordance from $W_{\partial\Delta^p}$ to itself. Since $W$ and $\widetilde{c}^*W$ are concordant, we can replace $W$ with $\widetilde{c}^*W$ in (\ref{forget.normal.diag}), and relabel $\widetilde{c}^*W$ as $W$.  

Let $\tilde{\Delta}^p$ denote the closure of the complement $\Delta^p - c(\partial\Delta^p\times [0,1])$. By Theorem \ref{kuiper.lashof2}, we can find an open neighborhood $V$ of the section 
$\tilde{\Delta}^p\times \{\mathbf{0}\}$ in $\tilde{\Delta}^p\times\mathbb{R}^N$ such that $(V, W_{\tilde{\Delta}^p}\cap V)$ is a piecewise linear $(\mathbb{R}^N, \mathbb{R}^d)$-bundle over $\tilde{\Delta}^p$. 
Moreover, if we denote the restriction of $V$ over $\partial\tilde{\Delta}^p$ by  $V_{\partial\tilde{\Delta}^p}$, then by Remark \ref{isotopy.bund} we have that the bundle $(U, W_{\partial\Delta^p}\cap U)$
is isotopic to $(V_{\partial\tilde{\Delta}^p}, W_{\partial\tilde{\Delta}^p}\cap V_{\partial\tilde{\Delta}^p})$ once we identify 
$\partial\tilde{\Delta}^p$ with $\partial\Delta^p$.   
Using this isotopy, we can construct a piecewise linear $(\mathbb{R}^N, \mathbb{R}^d)$-bundle over the image $c(\partial\Delta^p\times [0,1])$ which is a concordance between 
$(U, W_{\partial\Delta^p}\cap U)$
and $(V_{\partial\tilde{\Delta}^p}, W_{\partial\tilde{\Delta}^p}\cap V_{\partial\tilde{\Delta}^p})$. 
Since $\Psi_d^{\hat{\circ}}(\mathbb{R}^N)$ is a quasi-PL space, 
we can glue this concordance to $(V, W_{\tilde{\Delta}^p}\cap V)$ 
to obtain a piecewise linear $(\mathbb{R}^N, \mathbb{R}^d)$-bundle $(\widetilde{U}, \widetilde{U}\cap W)$ over $\Delta^p$. Since $(\widetilde{U}, \widetilde{U}\cap W)$ clearly extends $(U, W_{\partial\Delta^p}\cap U)$, 
this bundle induces a lift $\Delta^p_{\bullet} \rightarrow \Psi_d^{\hat{\circ}}(\mathbb{R}^N)_{\bullet}$ in (\ref{forget.normal.diag}), which proves that the forgetful map $\mathcal{F}:  \Psi_d^{\hat{\circ}}(\mathbb{R}^N)_{\bullet} \rightarrow \Psi_d^{\circ}(\mathbb{R}^N)_{\bullet}$ is a weak homotopy equivalence. 

For the second claim in this proposition, just note that the inclusion
$\mathcal{J}_N: \mathrm{Gr}_d(\mathbb{R}^N)_{\bullet} \hookrightarrow \Psi_d^{\circ}(\mathbb{R}^N)_{\bullet}$
is equal to the composition of the inclusion $\mathrm{Gr}_d(\mathbb{R}^N)_{\bullet} \hookrightarrow \Psi_d^{\hat{\circ}}(\mathbb{R}^N)_{\bullet}$ and the forgetful map 
$\mathcal{F}:  \Psi_d^{\hat{\circ}}(\mathbb{R}^N)_{\bullet} \rightarrow \Psi_d^{\circ}(\mathbb{R}^N)_{\bullet}$. 
The result now follows immediately from Corollary \ref{htpy.type.grass}.
\end{proof}

Fix a PL space $P$. Recall that any element $(W,f)$ of the set 
$\widetilde{\Psi}_d(\mathbb{R}^N)(P)$ with $W \neq \varnothing$ 
defines a piecewise linear microbundle pair of the form 
\[
\xymatrix{ V_f  \hspace{0.1cm} \ar@{^{(}->}[r]^{\hspace{-0.8cm}\tilde{f}} & 
(V_f\times\mathbb{R}^N,W) \ar[r]^{\hspace{0.8cm}\pi} & V_f, 
}
\]
where $V_f$ denotes the pre-image $f^{-1}(\mathbb{R}^N)$, and $\tilde{f}: V_f \rightarrow W$ is the piecewise linear embedding defined by $\lambda \mapsto (\lambda, f(\lambda))$. 
As we did for $\Psi_d^{\circ}(\mathbb{R}^N)$, we can also define a variant of $\widetilde{\Psi}_d(\mathbb{R}^N)$ 
where manifolds have normal data near the values of the marking function.  

\theoremstyle{definition} \newtheorem{psi.marking.normal}[grassm]{Definition}

\begin{psi.marking.normal} \label{psi.marking.normal}

$\widetilde{\Psi}_d^{\hat{\circ}}(\mathbb{R}^N)$ will be the quasi-PL space such that, for any PL space $P$, 
the set $\widetilde{\Psi}_d^{\hat{\circ}}(\mathbb{R}^N)(P)$ will consist of all triples $(W,f,U)$ satisfying the following: 

\begin{itemize}
\item[(i)] The pair $(W,f)$ is an element of $\widetilde{\Psi}_d(\mathbb{R}^N)(P)$.
\vspace{0.15cm}

\item[(ii)] If $W = \varnothing$, we require that $U$ is also empty. On the other hand, if $W \neq \varnothing$, then $U$ is an open neighborhood of the image of 
$\tilde{f}$ in $V_f\times \mathbb{R}^N$ . In other words, $U$ is a neighborhood of the graph of $f|_{V_f}$.  

\vspace{0.15cm}

\item[(iii)] Furthermore, if $W\neq \varnothing$, then the diagram
\[
\xymatrix{ V_f  \hspace{0.1cm} \ar@{^{(}->}[r]^{\hspace{-0.8cm}\tilde{f}} & 
(U,W\cap U) \ar[r]^{\hspace{0.6cm}\pi} & V_f
}
\]
must be a piecewise linear $(\mathbb{R}^N,\mathbb{R}^d)$-bundle over $V_f$. 
\end{itemize} 
\end{psi.marking.normal}

As we did for the simplicial set $\Psi_d^{\hat{\circ}}(\mathbb{R}^N)_{\bullet}$, we can also prove that forgetting the normal data in $\widetilde{\Psi}_d^{\hat{\circ}}(\mathbb{R}^N)_{\bullet}$ gives a weak equivalence between 
$\widetilde{\Psi}_d^{\hat{\circ}}(\mathbb{R}^N)_{\bullet}$ and $\widetilde{\Psi}_d(\mathbb{R}^N)_{\bullet}$. 

\theoremstyle{plain} \newtheorem{forget.normal2}[grassm]{Proposition}

\begin{forget.normal2} \label{forget.normal2} 

The forgetful map $\widetilde{\mathcal{F}}: \widetilde{\Psi}_d^{\hat{\circ}}(\mathbb{R}^N)_{\bullet} \rightarrow \widetilde{\Psi}_d(\mathbb{R}^N)_{\bullet}$ defined by $(W,f,U) \mapsto (W,f)$ is a weak homotopy equivalence. 
\end{forget.normal2}

\begin{proof}
We start by observing that we can define the forgetful map $\widetilde{\mathcal{F}}$ at the level of PL sets; i.e., we can
define $\widetilde{\mathcal{F}}$ as the natural transformation 
$\widetilde{\mathcal{F}}: \widetilde{\Psi}_d^{\hat{\circ}}(\mathbb{R}^N) \Rightarrow \widetilde{\Psi}_d(\mathbb{R}^N)$ 
between PL sets such that, for any PL space $P$, the component  
$\widetilde{\mathcal{F}}_P: \widetilde{\Psi}_d^{\hat{\circ}}(\mathbb{R}^N)(P) \rightarrow \widetilde{\Psi}_d(\mathbb{R}^N)(P)$
maps any triple $(W,f,U) \in \widetilde{\Psi}_d^{\hat{\circ}}(\mathbb{R}^N)(P)$ to $(W,f)$. Then, the morphism of simplicial sets
given in the statement of this proposition is the map 
$\widetilde{\mathcal{F}}_{\bullet}: \widetilde{\Psi}_d^{\hat{\circ}}(\mathbb{R}^N)_{\bullet} \rightarrow \widetilde{\Psi}_d(\mathbb{R}^N)_{\bullet}$ 
induced by this natural transformation. 
Henceforth, 
we shall denote the forgetful map between simplicial sets by $\widetilde{\mathcal{F}}_{\bullet}$ and the one 
between PL sets by $\widetilde{\mathcal{F}}$.

Next, note that by Theorem \ref{kuiper.lashof2} we have that each component $\widetilde{\mathcal{F}}_p: \widetilde{\Psi}_d^{\hat{\circ}}(\mathbb{R}^N)_p \rightarrow \widetilde{\Psi}_d(\mathbb{R}^N)_p$
of $\widetilde{\mathcal{F}}_{\bullet}$ is surjective. To prove that $\widetilde{\mathcal{F}}_{\bullet}$
is indeed a weak homotopy equivalence,  
we will use the following PL subsets of 
$\widetilde{\Psi}_d^{\hat{\circ}}(\mathbb{R}^N)$:  

\begin{itemize}
\item[$F_0 =$] PL subset whose value at a PL space $P$ is the subset 
$F_0(P) \subset \widetilde{\Psi}_d^{\hat{\circ}}(\mathbb{R}^N)(P)$ 
of all elements  $(W,f, U)$ such that all fibers of $W$ are non-empty.  

\vspace{0.15cm}

\item[$F_1 =$] PL subset whose value at a PL space $P$ is the subset 
$F_1(P) \subset \widetilde{\Psi}_d^{\hat{\circ}}(\mathbb{R}^N)(P)$ 
of all elements  
$(W,f,U)$ such that each fiber of $W$ is 
disjoint from the origin $\mathbf{0}$ in $\mathbb{R}^N$.   

\vspace{0.15cm}

\item[$F =$] PL subset whose value at a PL space $P$ is
$F_0(P)\cup F_1(P)$. 

\end{itemize}

Moreover, let $S_0$ and $S_1$ be the PL subsets of $\widetilde{\Psi}_d(\mathbb{R}^N)$
obtained by taking the images of $F_0$ and $F_1$ respectively under 
the forgetful map $\widetilde{\mathcal{F}}$.
In other words, $S_0$ (resp. $S_1$) is the PL
subset of  $\widetilde{\Psi}_d(\mathbb{R}^N)$ 
whose value at a PL space $P$ is 
$S_0(P) = \widetilde{\mathcal{F}}_P(F_0(P))$ 
(resp. $S_1(P) = \widetilde{\mathcal{F}}_P(F_1(P))$).
Also, let $S$ be the PL subset of 
$\widetilde{\Psi}_d(\mathbb{R}^N)$ 
such that $S(P) = S_0(P)\cup S_1(P)$ 
for any PL space $P$. 
It is clear that we have that $F_{\bullet} = F_{0\bullet}\cup F_{1\bullet}$ 
and $S_{\bullet} = S_{0\bullet}\cup S_{1\bullet}$,
where $F_{0\bullet}$, $F_{1\bullet}$, $F_{\bullet}$,
$S_{0\bullet}$, $S_{1\bullet}$, and $S_{\bullet}$
are the underlying simplicial sets of
$F_0$, $F_1$, $F$, $S_0$, $S_1$, and $S$ respectively.

Now, consider the following push-out diagrams of simplicial sets: 
\begin{equation} \label{pushouts}
\xymatrix{ F_{0\bullet}\cap F_{1\bullet} \ar[r] \ar[d] & F_{1\bullet} \ar[d] & \quad & S_{0\bullet}\cap S_{1\bullet} \ar[r] \ar[d] & S_{1\bullet} \ar[d] \\
F_{0\bullet} \ar[r] & F_{\bullet} & \quad & S_{0\bullet} \ar[r] & S_{\bullet}.
}
\end{equation}
Note that the conditions used to define the PL subsets $F_0$ and $F_1$ are actually open conditions. 
In other words, if $W$ is an element of $\widetilde{\Psi}_d^{\hat{\circ}}(\mathbb{R}^N)(P)$ and $x$ is a point in $P$ such that the fiber $W_x$ is non-empty, then we can find an open neighborhood $U$ of $x$ such that each fiber of $W_U$ is non-empty.  Similarly, if we assume instead that $W_x$ does not intersect the origin, 
then we can find an open neighborhood $V$ of $x$ such that each fiber of $W_V$ does not intersect the origin either. 
It follows from Proposition \ref{subdivision.cover} that the standard inclusion 
$F_{\bullet} \hookrightarrow \widetilde{\Psi}_d^{\hat{\circ}}(\mathbb{R}^N)_{\bullet}$ is a weak homotopy equivalence. In a similar fashion, we can prove that
the standard inclusion  $S_{\bullet} \hookrightarrow \widetilde{\Psi}_d(\mathbb{R}^N)_{\bullet}$ is also a weak homotopy equivalence. 
Therefore, to finish this proof, it suffices to show that the restriction 
$\widetilde{\mathcal{F}}_{\bullet}|_{F_{\bullet}}: F_{\bullet} \rightarrow S_{\bullet}$ is a weak homotopy equivalence. Moreover, since all the maps in the diagrams 
given in (\ref{pushouts}) are inclusions, it is enough to show that the restrictions
\[
\widetilde{\mathcal{F}}_{\bullet}|_{F_{0\bullet}}: F_{0\bullet} \rightarrow S_{0\bullet} \qquad \widetilde{\mathcal{F}}_{\bullet}|_{F_{1\bullet}}: F_{1\bullet} \rightarrow S_{1\bullet} \qquad 
\widetilde{\mathcal{F}}_{\bullet}|_{F_{0\bullet}\cap F_{1\bullet}}:  F_{0\bullet}\cap F_{1\bullet} \rightarrow S_{0\bullet}\cap S_{1\bullet}
\]   
are weak homotopy equivalences. 
By doing proofs identical to the one we did for Proposition \ref{forget.normal}, we can show that the restrictions 
$\widetilde{\mathcal{F}}_{\bullet}|_{F_{0\bullet}}$ and $\widetilde{\mathcal{F}}_{\bullet}|_{F_{0\bullet}\cap F_{1\bullet}}$ are weak homotopy equivalences. On the other hand, using the scanning techniques from $\S$\ref{section.marking}, we can show that both $F_{1\bullet}$ and $S_{1\bullet}$ are contractible. This automatically implies that 
$\widetilde{\mathcal{F}}_{\bullet}|_{F_{1\bullet}}$ is also a weak homotopy equivalence, which concludes the proof of this proposition.  
\end{proof}

\subsection{Spherical fibrations over spaces of manifolds} 

To show that the map of spectra $\mathcal{I}: \mathbf{MT}PL(d)  \rightarrow \widetilde{\Psi}_d$ introduced in (\ref{map.spectra1}) is a weak equivalence, we will need the following four quasi-PL spaces. 

\theoremstyle{definition} \newtheorem{total.spherical}[grassm]{Definition}

\begin{total.spherical}  \label{total.spherical}

$\mathcal{S}, \mathcal{T}, \mathcal{S}', \mathcal{T}': \mathbf{PL}^{op} \rightarrow \mathbf{Sets}$ are the quasi-PL spaces such that, for any PL space $P$, 
the sets $\mathcal{S}(P)$, $\mathcal{T}(P)$, $\mathcal{S}'(P)$, and $\mathcal{T}'(P)$ are defined as follows: 

\begin{itemize}

\item[$\cdot$] $\mathcal{T}(P)$ is the set of tuples $(W,f)$ where $W \in \mathrm{Gr}_d(\mathbb{R}^N)(P)$ and $f:P\rightarrow \mathbb{R}^N$ is a PL map with the property that, for any $x \in P$, either $f(x) \notin W_x$ or $f(x) = \mathbf{0}$. As usual, $\mathbf{0}$ denotes the origin of $\mathbb{R}^N$. 
On the other hand, $\mathcal{S}(P)$ is the subset of
$\mathcal{T}(P)$ of all tuples $(W,f)$ such that $f(x) \notin W_x$ for all $x \in P$.

\vspace{0.15cm}

\item[$\cdot$] $\mathcal{T}'(P)$ is the set of triples $(W, U, f)$ 
such that $(W,U) \in \Psi_d^{\hat{\circ}}(\mathbb{R}^N)(P)$ and $f:P\rightarrow \mathbb{R}^N$ is a PL map 
satisfying the following two properties: 

\begin{itemize}

\item[-]  The graph of $f$ is contained in $U$.

\item[-] For any $x \in P$, we have either $f(x) \notin W_x$ or $f(x) = \mathbf{0}$.

\end{itemize}

On the other hand, $\mathcal{S}'(P)$ is the subset of $\mathcal{T}'(P)$ 
of all triples $(W, U, f)$ satisfying $f(x) \notin W_x$ for all $x \in P$.
 
\end{itemize}

Structure maps for $\mathcal{S}$, $\mathcal{T}$, $\mathcal{S}'$, and $\mathcal{T}'$ are defined by taking pull-backs.  

\end{total.spherical}

Note that there is a canonical inclusion  $\mathcal{S} \Rightarrow \mathcal{S}'$ 
of PL sets which maps any tuple 
$(W,f) \in \mathcal{S}(P)$ to $(W, P\times \mathbb{R}^N, f) \in \mathcal{S}'(P)$. We can also 
define an inclusion $\mathcal{T} \Rightarrow \mathcal{T}'$ in exactly the same way.  
The induced inclusions $\mathcal{S}_{\bullet} \hookrightarrow \mathcal{S}'_{\bullet}$
and $\mathcal{T}_{\bullet} \hookrightarrow \mathcal{T}'_{\bullet}$
between the corresponding underlying simplicial sets
have the following property.  

\theoremstyle{plain} \newtheorem{spherical.equiv}[grassm]{Proposition}

\begin{spherical.equiv} \label{spherical.equiv}

The inclusions $\mathcal{S}_{\bullet} \hookrightarrow \mathcal{S}'_{\bullet}$ 
and $\mathcal{T}_{\bullet} \hookrightarrow \mathcal{T}'_{\bullet}$
are weak homotopy equivalences. 
\end{spherical.equiv}

\begin{proof}
We will start this proof by showing that the obvious forgetful maps 
\[
F:  \mathcal{S}_{\bullet} \rightarrow \mathrm{Gr}_d(\mathbb{R}^N)_{\bullet} \qquad F': \mathcal{S}'_{\bullet} \rightarrow \Psi_d^{\hat{\circ}}(\mathbb{R}^N)_{\bullet}
\]
\[
G:  \mathcal{T}_{\bullet} \rightarrow \mathrm{Gr}_d(\mathbb{R}^N)_{\bullet} \qquad G': \mathcal{T}'_{\bullet} \rightarrow \Psi_d^{\hat{\circ}}(\mathbb{R}^N)_{\bullet}
\]
are Kan fibrations. We will only do this for the map $G'$, the proof for the other three maps being completely analogous. 
Fix then an $i$ in $\{0, \ldots, p\}$, and consider a commutative diagram of the form 
\begin{equation} \label{diagram.forget.fib}
\xymatrix{ \Lambda^p_{i\bullet} \hspace{0.1cm} \ar@{^{(}->}[d] \ar[r] & 
 \mathcal{T}'_{\bullet} \ar[d]^{G'}  \\
\Delta^p_{\bullet} \ar[r] & \Psi_d^{\hat{\circ}}(\mathbb{R}^N)_{\bullet}.
}
\end{equation}
As we did in the proof of Proposition \ref{germ.equivalence}, we will denote the $i$-th horn of the geometric simplex $\Delta^p$ by $\Lambda^p_i$. This diagram is represented by an element 
$(W,U)$ of $\Psi_d^{\hat{\circ}}(\mathbb{R}^N)(\Delta^p)$ and a piecewise linear function $f: \Lambda^p_i \rightarrow \mathbb{R}^N$ such that the triple 
$(W_{\Lambda^p_i},U_{\Lambda^p_i},f)$ 
is an element of $\mathcal{T}'(\mathbb{R}^N)(\Lambda^p_i)$. 
Since the base-space $\Delta^p$ is contractible, we can find a piecewise linear homeomorphism $H: \Delta^p \times \mathbb{R}^N \rightarrow U$ which commutes with the projection onto $\Delta^p$ and maps the subspace $\Delta^p\times \mathbb{R}^d$ onto $W\cap U$. 
Let $H_{x}$ denote the value of $H$ at a point $x \in \Delta^p$. 
Also, pick a piecewise linear homeomorphism $h: \Lambda^p_i \times [0,1] \rightarrow \Delta^p$ which maps $\Lambda^p_i \times \{ 0\}$ identically to $\Lambda^p_i$. 
Note that  the map $\tilde{f}: \Lambda^p_i \rightarrow \mathbb{R}^N$ defined by $\tilde{f}(x) = H_x^{-1}(f(x))$ can be extended to a map $\tilde{g}$ on $\Delta^p$ by setting $\tilde{g}(h(x,t)) = \tilde{f}(x)$. Then, the map $g: \Delta^p \rightarrow \mathbb{R}^N$ defined by $g(y) = H_y(\tilde{g}(y))$ extends $f$, and its graph is contained in $U$. 
Also, by the way we constructed the map $g$, it is not hard to verify that, 
for any $x \in \Delta^p$, we have either 
$g(x) \notin W_x$ or $g(x) = \mathbf{0}$.
In other words, the triple $(W, U, g)$ defines a lift 
$\Delta^p_{\bullet} \rightarrow \mathcal{T}'_{\bullet}$ in (\ref{diagram.forget.fib}). 

To conclude this proof, note that the canonical inclusions 
$\mathrm{Gr}_d(\mathbb{R}^N)_{\bullet} \hookrightarrow \Psi_d^{\hat{\circ}}(\mathbb{R}^N)_{\bullet}$,
$\mathcal{S}_{\bullet} \hookrightarrow \mathcal{S}'_{\bullet}$, 
$\mathcal{T}_{\bullet} \hookrightarrow \mathcal{T}'_{\bullet}$
and the fibrations $F$, $F'$, $G$, $G'$ fit into the following pull-back diagrams: 
\[
\xymatrix{ \mathcal{S}_{\bullet}  \hspace{0.1cm} \ar@{^{(}->}[r]  \ar[d]^{F}  & 
\mathcal{S}'_{\bullet} \ar[d]^{F'} &   & \mathcal{T}_{\bullet}  \hspace{0.1cm}  \ar@{^{(}->}[r] \ar[d]^{G} & \mathcal{T}'_{\bullet}   \ar[d]^{G'} \\
\mathrm{Gr}_d(\mathbb{R}^N)_{\bullet} \hspace{0.1cm} \ar@{^{(}->}[r] & \Psi_d^{\hat{\circ}}(\mathbb{R}^N)_{\bullet} & & 
 \mathrm{Gr}_d(\mathbb{R}^N)_{\bullet} \hspace{0.1cm} \ar@{^{(}->}[r] & \Psi_d^{\hat{\circ}}(\mathbb{R}^N)_{\bullet}.
}
\] 
Since the bottom inclusion in both diagrams is a weak homotopy equivalence by Proposition \ref{forget.normal}, we immediately have that 
$\mathcal{S}_{\bullet} \hookrightarrow \mathcal{S}'_{\bullet}$ and $\mathcal{T}_{\bullet} \hookrightarrow \mathcal{T}'_{\bullet}$
are also weak homotopy equivalences.  
\end{proof}

\theoremstyle{definition} \newtheorem{remark.sph.equi}[grassm]{Remark}

\begin{remark.sph.equi} \label{remark.sph.equi}
First, observe that Proposition \ref{forget.normal} and Remark \ref{germ.pathconn2}
imply that $\Psi_d^{\hat{\circ}}(\mathbb{R}^N)_{\bullet}$ is path-connected.  Next, 
consider again the Kan fibration $F:  \mathcal{S}_{\bullet} \rightarrow \mathrm{Gr}_d(\mathbb{R}^N)_{\bullet}$ 
defined by $(W,f) \mapsto W$. Denote by $\mathbb{R}^d$ the 0-simplex of 
$\mathrm{Gr}_d(\mathbb{R}^N)_{\bullet}$ induced by the standard inclusion 
$\mathbb{R}^d\hookrightarrow \mathbb{R}^N$ and let $\mathrm{Gr}_d(\mathbb{R}^N)_{\mathbb{R}^d}$ be the subsimplicial set consisting of all degeneracies of $\mathbb{R}^d$. The pre-image 
$F^{-1}(\mathrm{Gr}_d(\mathbb{R}^N)_{\mathbb{R}^d})$ is equal to the simplicial set $\mathcal{H}(x,\mathbb{R}^N \setminus \mathbb{R}^d)_{\bullet}$ where $p$-simplices are PL functions of the form 
$\Delta^p \rightarrow \mathbb{R}^N \setminus \mathbb{R}^d$. As proven in \cite{KL2}, the geometric realization 
$|\mathcal{H}(x,\mathbb{R}^N \setminus\mathbb{R}^d)_{\bullet}|$  is weak homotopy equivalent to 
$\mathbb{R}^N \setminus \mathbb{R}^d$. It follows that
$F:  \mathcal{S}_{\bullet} \rightarrow \mathrm{Gr}_d(\mathbb{R}^N)_{\bullet}$ is a spherical fibration whose fibers are weak homotopy equivalent to $S^{N-d-1}$. Since  $F:  \mathcal{S}_{\bullet} \rightarrow \mathrm{Gr}_d(\mathbb{R}^N)_{\bullet}$ is a pull-back of $F': \mathcal{S}'_{\bullet} \rightarrow \Psi_d^{\hat{\circ}}(\mathbb{R}^N)_{\bullet}$, we also have that $F'$ is a spherical fibration. 
\end{remark.sph.equi}

Recall that $A\mathrm{Gr}^+_d(\mathbb{R}^N)$ is the quasi-PL subspace of $\widetilde{\Psi}_d(\mathbb{R}^N)$ 
such that, for any PL space $P$, the set $A\mathrm{Gr}^+_d(\mathbb{R}^N)(P)$ consists of all tuples
$(W,f)$ with the property that the diagram
\begin{equation}  \label{bundle.diagram22}
\xymatrix{ V_f  \hspace{0.1cm} \ar@{^{(}->}[r]^{\hspace{-0.8cm}\tilde{f}} & 
(V_f\times\mathbb{R}^N,W) \ar[r]^{\hspace{0.75cm}\pi} & V_f
}
\end{equation}
is a piecewise linear $(\mathbb{R}^N,\mathbb{R}^d)$-bundle. 
Again, in the above diagram, $V_f$ denotes the pre-image $f^{-1}(\mathbb{R}^N)$.
Also, recall that the map of spectra $\mathcal{I}: \mathbf{MT}PL(d)  \rightarrow \widetilde{\Psi}_d$ 
was obtained by assembling the canonical inclusions 
$A\mathrm{Gr}^+_d(\mathbb{R}^N)_{\bullet} \hookrightarrow \widetilde{\Psi}_d(\mathbb{R}^N)_{\bullet}$.
Note that, for each $N$, the inclusion 
$A\mathrm{Gr}^+_d(\mathbb{R}^N)_{\bullet} \hookrightarrow \widetilde{\Psi}_d(\mathbb{R}^N)_{\bullet}$
can be expressed as the composition 
\begin{equation} \label{final.composition} 
\xymatrix{ A\mathrm{Gr}^+_d(\mathbb{R}^N)_{\bullet} \hspace{0.1cm} \ar@{^{(}->}[r]^{\hspace{0.1cm}\widetilde{j}} & 
\widetilde{\Psi}_d^{\hat{\circ}}(\mathbb{R}^N)_{\bullet}  \ar[r]^{\hspace{-0.1cm}\widetilde{\mathcal{F}}} &  
\widetilde{\Psi}_d(\mathbb{R}^N)_{\bullet},
}
\end{equation}
where $\widetilde{\mathcal{F}}$ is the forgetful map given by $(W,f,U) \mapsto (W,f)$, and the first map
is the inclusion 
$A\mathrm{Gr}^+_d(\mathbb{R}^N)_{\bullet} \hookrightarrow \widetilde{\Psi}_d^{\hat{\circ}}(\mathbb{R}^N)_{\bullet}$
which sends a tuple $(W,f)$ to $(W,f, V_f\times \mathbb{R}^N)$. 
Recall that we proved in Proposition \ref{forget.normal2}
that the forgetful map $\widetilde{\mathcal{F}}: \widetilde{\Psi}_d^{\hat{\circ}}(\mathbb{R}^N)_{\bullet} \rightarrow \widetilde{\Psi}_d(\mathbb{R}^N)_{\bullet}$
is a weak homotopy equivalence. 
Thus, to establish that 
$\mathcal{I}: \mathbf{MT}PL(d)  \rightarrow \widetilde{\Psi}_d$ 
is a weak equivalence of spectra, we just need to prove the following proposition.

\theoremstyle{plain} \newtheorem{last.step}[grassm]{Proposition}

\begin{last.step} \label{last.step}

The inclusion 
$\widetilde{j}: A\mathrm{Gr}^+_d(\mathbb{R}^N)_{\bullet} \hookrightarrow 
\widetilde{\Psi}_d^{\hat{\circ}}(\mathbb{R}^N)_{\bullet}$ 
defined by 
\[
(W, f)\mapsto (W, f, V_f\times \mathbb{R}^N) 
\]
is a weak homotopy equivalence.  
\end{last.step}

\begin{proof}

This proof is similar to that of Proposition \ref{forget.normal2}. 
First, we define the inclusion $\widetilde{j}$ at the level of PL sets,
i.e., we define $\widetilde{j}: A\mathrm{Gr}^+_d(\mathbb{R}^N) \Rightarrow \widetilde{\Psi}_d^{\hat{\circ}}(\mathbb{R}^N)$
to be the natural transformation of PL sets which 
maps each tuple of the form $(W,f) \in A\mathrm{Gr}^+_d(\mathbb{R}^N)(P)$ 
to $(W, f, V_f\times \mathbb{R}^N) \in \widetilde{\Psi}_d^{\hat{\circ}}(\mathbb{R}^N)(P)$, 
where $V_f$ is the pre-image $f^{-1}(\mathbb{R}^N)$. 
Then, the map given in the statement of this proposition is the map of 
simplicial sets $A\mathrm{Gr}^+_d(\mathbb{R}^N)_{\bullet} \rightarrow 
\widetilde{\Psi}_d^{\hat{\circ}}(\mathbb{R}^N)_{\bullet}$ induced by this morphism $\widetilde{j}$. 
From now on, as we did in the proof of Proposition \ref{forget.normal2},
we shall denote by $\widetilde{j}_{\bullet}$
 the map of simplicial sets $A\mathrm{Gr}^+_d(\mathbb{R}^N)_{\bullet} \rightarrow 
\widetilde{\Psi}_d^{\hat{\circ}}(\mathbb{R}^N)_{\bullet}$ induced by 
$\widetilde{j}: A\mathrm{Gr}^+_d(\mathbb{R}^N) \Rightarrow \widetilde{\Psi}_d^{\hat{\circ}}(\mathbb{R}^N)$.

Now, let $F_0$, $F_1$, and $F$ be the PL subsets of 
$\widetilde{\Psi}_d^{\hat{\circ}}(\mathbb{R}^N)$ whose values at a PL space $P$ are the following:  

\begin{itemize}
\item[$F_0(P) =$] subset of $\widetilde{\Psi}_d^{\hat{\circ}}(\mathbb{R}^N)(P)$
of all triples $(W,f, U)$ such that each fiber of $W$ is non-empty and the product $P \times \{\mathbf{0}\}$ is contained in $U$. 

\item[$F_1(P) =$] subset of $\widetilde{\Psi}_d^{\hat{\circ}}(\mathbb{R}^N)(P)$
of all triples $(W,f,U)$ such that each fiber of $W$ is disjoint from the origin $\mathbf{0}\in \mathbb{R}^N$.  

\item[$F(P) =$] $F_0(P)\cup F_1(P)$.  
\end{itemize}

Additionally, let $V_0$, $V_1$, and $V$ be the PL subsets of 
$A\mathrm{Gr}^+_d(\mathbb{R}^N)$ whose values at a PL space $P$ are the following:  

\begin{itemize}
\item[$V_0(P) =$] subset of $A\mathrm{Gr}^+_d(\mathbb{R}^N)(P)$
of all tuples $(W,f)$ such that each fiber of $W$ is non-empty.  

\item[$V_1(P) =$] subset of $A\mathrm{Gr}^+_d(\mathbb{R}^N)(P)$
of all tuples $(W,f)$ such that each fiber of $W$ is disjoint from the origin $\mathbf{0} \in \mathbb{R}^N$.  

\item[$V(P) =$] $V_0(P)\cup V_1(P)$. 
\end{itemize}

Evidently, the underlying simplicial sets $F_{\bullet}$ and 
$V_{\bullet}$ of $F$ and $V$ are equal to the unions 
$F_{0\bullet}\cup F_{1\bullet}$ and  $V_{0\bullet}\cup V_{1\bullet}$ respectively.
Also, note that $V_{0\bullet}$, $V_{1\bullet}$, and $V_{\bullet}$ are the pre-images of $F_{0\bullet}$, $F_{1\bullet}$, and $F_{\bullet}$ respectively under the  inclusion 
$\widetilde{j}_{\bullet}: A\mathrm{Gr}^+_d(\mathbb{R}^N)_{\bullet} \hookrightarrow \widetilde{\Psi}_d^{\hat{\circ}}(\mathbb{R}^N)_{\bullet}$.  

Now consider the following push-out diagrams of simplicial sets, where all the maps are the obvious inclusions:   
\begin{equation} \label{pushouts2}
\xymatrix{ V_{0\bullet}\cap V_{1\bullet} \ar[r] \ar[d] & V_{1\bullet} \ar[d] & \quad & F_{0\bullet}\cap F_{1\bullet} \ar[r] \ar[d] & F_{1\bullet} \ar[d] \\
V_{0\bullet} \ar[r] & V_{\bullet} & \quad & F_{0\bullet} \ar[r] & F_{\bullet}.
}
\end{equation}
By arguments similar to those used in the proof of Proposition \ref{forget.normal2}, the inclusions
\[
V_{\bullet} \hookrightarrow A\mathrm{Gr}^+_d(\mathbb{R}^N)_{\bullet} \qquad F_{\bullet} 
\hookrightarrow  \widetilde{\Psi}_d^{\hat{\circ}}(\mathbb{R}^N)_{\bullet}
\]  
are both weak homotopy equivalences. 
Then, since all the maps in (\ref{pushouts2}) are inclusions, it suffices to prove that
\[
V_{1\bullet} \hookrightarrow F_{1\bullet} \qquad V_{0\bullet} \hookrightarrow F_{0\bullet} \qquad V_{0\bullet}\cap V_{1\bullet} \hookrightarrow F_{0\bullet}\cap F_{1\bullet}
\]
are all weak equivalences. We discuss each one of these maps separately:

$\underline{V_{1\bullet} \hookrightarrow F_{1\bullet}:}$ Again, since both $V_{1\bullet}$ and $F_{1\bullet}$ are contractible, 
we immediately have that  this inclusion is a weak homotopy equivalence.

$\underline{V_{0\bullet} \hookrightarrow F_{0\bullet}:}$  In this step of the proof (and the next one), 
we shall use the following notation: 
Given any PL function $f: \Delta^p \rightarrow \mathbb{R}^N$, let $H_{-f}: \Delta^p \times \mathbb{R}^N \rightarrow \Delta^p \times \mathbb{R}^N$ be the PL homeomorphism defined by 
$H_{-f}(\lambda, \vec{x}) = (\lambda, \vec{x} - f(\lambda))$. 
For any PL subspace $Q$ of $\Delta^p\times \mathbb{R}^N$, 
we will denote the image $H_{-f}(Q)$ by $Q -f$.

Now, let $\mathcal{T}_{\bullet}$ and $\mathcal{T}'_{\bullet}$
be the underlying simplicial sets of the quasi-PL spaces 
$\mathcal{T}$ and $\mathcal{T}'$ introduced in Definition \ref{total.spherical}, and note that
the inclusions $\mathcal{T}_{\bullet} \hookrightarrow \mathcal{T}'_{\bullet}$ and
$V_{0\bullet} \hookrightarrow F_{0\bullet}$ fit in the commutative diagram
\[
\xymatrix{ V_{0\bullet}  \hspace{0.1cm} \ar@{^{(}->}[r] \ar[d]  & 
 F_{0\bullet}  \ar[d]  \\
\mathcal{T}_{\bullet} \hspace{0.1cm} \ar@{^{(}->}[r] & \mathcal{T}'_{\bullet},
}
\]
where the 
left and right vertical maps are defined respectively by $(W,f) \mapsto (W - f, -f)$ and $(W,f,U)\mapsto (W - f, -f, U - f)$. 
Both of these maps are clearly isomorphisms. Therefore, by Proposition \ref{spherical.equiv}, 
the map $V_{0\bullet} \hookrightarrow F_{0\bullet}$
is a weak homotopy equivalence. 

$\underline{V_{0\bullet}\cap V_{1\bullet} \hookrightarrow F_{0\bullet}\cap F_{1\bullet}:}$  
This step is practically identical to the previous one. First, 
let $\mathcal{S}_{\bullet}$ and $\mathcal{S}'_{\bullet}$
be the underlying simplicial sets of the quasi-PL spaces 
$\mathcal{S}$ and $\mathcal{S}'$ we introduced in Definition \ref{total.spherical}.
We can fit the inclusions 
$\mathcal{S}_{\bullet} \hookrightarrow \mathcal{S}'_{\bullet}$ and 
$V_{0\bullet}\cap V_{1\bullet} \hookrightarrow F_{0\bullet}\cap F_{1\bullet}$ in the commutative diagram
\[
\xymatrix{ V_{0\bullet}\cap V_{1\bullet} \hspace{0.1cm} \ar@{^{(}->}[r] \ar[d]  & 
 F_{0\bullet} \cap F_{1\bullet} \ar[d]  \\
\mathcal{S}_{\bullet} \hspace{0.1cm} \ar@{^{(}->}[r] & \mathcal{S}'_{\bullet},
}
\]
where, once again, 
the left and right vertical maps are defined by $(W,f) \mapsto (W - f, -f)$ and $(W,f,U)\mapsto (W - f, -f, U - f)$ respectively. 
Since both of these vertical maps are isomorphisms, Proposition \ref{spherical.equiv} implies that
$V_{0\bullet}\cap V_{1\bullet} \hookrightarrow F_{0\bullet}\cap F_{1\bullet}$ is also a weak homotopy equivalence. 
\end{proof}

We can now prove the following. 

\theoremstyle{plain} \newtheorem{hatcher}[grassm]{Theorem}

\begin{hatcher} \label{hatcher}
The map of spectra $\mathcal{I}: \mathbf{MT}PL(d)  \rightarrow \widetilde{\Psi}_d$ is a weak equivalence.
\end{hatcher}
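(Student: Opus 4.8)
The plan is to show that $\mathcal{I}\colon \mathbf{MT}PL(d) \to \widetilde{\Psi}_d$ is a levelwise weak equivalence, and then invoke the standard fact that a map of spectra which is a weak homotopy equivalence on every level is a stable weak equivalence. Recall from Definition \ref{geometric.mtpl} and the discussion around (\ref{map.spectra1}) that the $N$-th component of $\mathcal{I}$ is precisely the canonical inclusion of simplicial sets
\[
A\mathrm{Gr}_d^+(\mathbb{R}^N)_{\bullet} \hookrightarrow \widetilde{\Psi}_d(\mathbb{R}^N)_{\bullet},
\]
and that these components are compatible with the structure maps $\widetilde{\delta}_N$ and $\widetilde{\sigma}_N$ by construction, since $\widetilde{\delta}_N$ is by definition the restriction of $\widetilde{\sigma}_N$. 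So the only thing that requires argument is that each of these inclusions is a weak homotopy equivalence.

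For this I would use the factorization (\ref{final.composition}),
\[
\xymatrix{ A\mathrm{Gr}^+_d(\mathbb{R}^N)_{\bullet} \hspace{0.1cm} \ar@{^{(}->}[r]^{\hspace{0.1cm}\widetilde{j}} &
\widetilde{\Psi}_d^{\hat{\circ}}(\mathbb{R}^N)_{\bullet}  \ar[r]^{\hspace{-0.1cm}\widetilde{\mathcal{F}}} &
\widetilde{\Psi}_d(\mathbb{R}^N)_{\bullet},
}
\]
in which $\widetilde{j}$ sends $(W,f)$ to $(W,f,V_f\times\mathbb{R}^N)$ and $\widetilde{\mathcal{F}}$ is the forgetful map $(W,f,U)\mapsto (W,f)$. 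Proposition \ref{last.step} gives that $\widetilde{j}$ is a weak homotopy equivalence, and Proposition \ref{forget.normal2} gives that $\widetilde{\mathcal{F}}$ is a weak homotopy equivalence; hence their composite, namely the $N$-th component of $\mathcal{I}$, is a weak homotopy equivalence of simplicial sets for every $N$.

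Combining these observations: $\mathcal{I}$ is a map of spectra each of whose components is a weak homotopy equivalence, so $\mathcal{I}$ is a weak equivalence of spectra, completing the proof. I do not expect any genuine obstacle at this stage — the theorem is essentially a bookkeeping statement assembling Propositions \ref{last.step} and \ref{forget.normal2}. The real difficulties live upstream: identifying $\widetilde{\Psi}_d^{\hat{\circ}}$ as the correct intermediate object, the Kuiper--Lashof bundle representation input used in Proposition \ref{forget.normal2}, and the spherical fibration argument (Proposition \ref{spherical.equiv}, Remark \ref{remark.sph.equi}) feeding into Proposition \ref{last.step}. The one point worth stating explicitly in the write-up is the compatibility of the levelwise equivalences with the spectrum structure maps, but as noted this is immediate from the way $\widetilde{\delta}_N$ was defined as a restriction of $\widetilde{\sigma}_N$.
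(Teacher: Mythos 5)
Your proposal is correct and follows exactly the paper's own argument: the paper likewise factors $\mathcal{I}_N$ through $\widetilde{\Psi}_d^{\hat{\circ}}(\mathbb{R}^N)_{\bullet}$ via (\ref{final.composition}) and concludes by citing Propositions \ref{last.step} and \ref{forget.normal2}. The extra remark about compatibility with the structure maps is a harmless (and correct) elaboration of what the paper leaves implicit.
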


\begin{proof} 
As mentioned earlier, 
the map $\mathcal{I}_N: A\mathrm{Gr}^+_d(\mathbb{R}^N)_{\bullet} \rightarrow \widetilde{\Psi}_d(\mathbb{R}^N)_{\bullet}$ between the corresponding $N$-th levels is equal to the composition 
\[
\xymatrix{ A\mathrm{Gr}^+_d(\mathbb{R}^N)_{\bullet} \hspace{0.1cm} \ar@{^{(}->}[r]^{\hspace{0.1cm}\widetilde{j}} & 
\widetilde{\Psi}_d^{\hat{\circ}}(\mathbb{R}^N)_{\bullet}  \ar[r]^{\hspace{-0.1cm}\widetilde{\mathcal{F}}} &  
\widetilde{\Psi}_d(\mathbb{R}^N)_{\bullet}.
}
\]
Then, by Propositions \ref{forget.normal2} and \ref{last.step}, it follows that $\mathcal{I}_N$ 
is a weak homotopy equivalence. 
\end{proof}

By combining this last result with Theorem \ref{marking.equivalence}, we obtain the main theorem of this paper,
which was presented as Theorem \ref{thmb} in the introduction. 

\theoremstyle{plain} \newtheorem{main.theorem.g}[grassm]{Theorem}

\begin{main.theorem.g} \label{main.theorem.g}

There is a weak equivalence $\mathbf{MT}PL(d)  \stackrel{\simeq}{\longrightarrow} \Psi_d$.  

\end{main.theorem.g}

\begin{proof}
Define the map of spectra $\mathbf{MT}PL(d) \rightarrow \Psi_d$ to be the composition 
\[
\xymatrix{ \mathbf{MT}PL(d)  \ar[r]^{\hspace{0.5cm}\mathcal{I}}  & \widetilde{\Psi}_ d \ar[r]^{\mathcal{F}} & \Psi_d,
}
\]
where $\mathcal{F}$ is the map defined in (\ref{forgetful.spectra.map}). 
By Theorems \ref{hatcher} and \ref{marking.equivalence}, this composition  is a weak equivalence.
\end{proof}

\subsection{The homotopy type of the Madsen-Tillmann spectrum}

In this subsection, we justify why it is appropriate to call our spectrum $\mathbf{MT}PL(d)$ the \textit{PL Madsen-Tillmann spectrum}. Let 
$\mathrm{Gr}_d^{\mathrm{Diff}}(\mathbb{R}^N)$ be the 
\textit{smooth Grassmannian} of $d$-planes in $\mathbb{R}^N$. In other words,  
$\mathrm{Gr}_d^{\mathrm{Diff}}(\mathbb{R}^N)$ is the set of all $d$-dimensional linear subspaces of $\mathbb{R}^N$
(see \cite{MS} for a discussion of the topology of this space).
Also, let $S\mathrm{Gr}_d^{\mathrm{Diff}}(\mathbb{R}^N)$ be the space of tuples $(x,P)$ with $P \in \mathrm{Gr}_d^{\mathrm{Diff}}(\mathbb{R}^N)$ and $x \in \mathbb{R}^N \setminus P$. Finally, recall that the $N$-th level of the Madsen-Tillmann spectrum $\mathbf{MT}O(d)$ is the Thom space $\mathrm{Th}(\gamma^{\perp}_{d,N})$, where $\gamma^{\perp}_{d,N}$ is the standard orthogonal vector bundle over $\mathrm{Gr}_d^{\mathrm{Diff}}(\mathbb{R}^N)$. One can easily show that $\mathrm{Th}(\gamma^{\perp}_{d,N})$   is weak homotopy equivalent to the mapping cone of the forgetful map 
$S\mathrm{Gr}_d^{\mathrm{Diff}}(\mathbb{R}^N)\rightarrow \mathrm{Gr}_d^{\mathrm{Diff}}(\mathbb{R}^N)$, which is a spherical fibration over $\mathrm{Gr}_d^{\mathrm{Diff}}(\mathbb{R}^N)$. In the next proposition, we give a similar result for the levels of $\mathbf{MT}PL(d)$.   

\theoremstyle{plain} \newtheorem{levels.mtpld}[grassm]{Proposition}

\begin{levels.mtpld} \label{levels.mtpld}

Let $\mathcal{S}: \mathbf{PL}^{op} \rightarrow \mathbf{Sets}$ be the quasi-PL space introduced in Definition \ref{total.spherical} and let 
$F: \mathcal{S}_{\bullet} \rightarrow \mathrm{Gr}_d(\mathbb{R}^N)_{\bullet}$ be the obvious forgetful map. Then, the simplicial set $A\mathrm{Gr}^+_d(\mathbb{R}^N)_{\bullet}$ has the same weak homotopy type as the mapping cone of $F$. 
\end{levels.mtpld}

\begin{proof} 

We will use again the following two PL subsets of $A\mathrm{Gr}^+_d(\mathbb{R}^N)$ that we used in the proof of Proposition \ref{last.step}: 

\begin{itemize}
\item[$V_0 =$] PL subset whose value at a PL space $P$ is the subset $V_0(P) \subset A\mathrm{Gr}^+_d(\mathbb{R}^N)(P)$ 
of all tuples $(W,f)$ such that each fiber of $W$ is non-empty.  

\item[$V_1 =$]  PL subset whose value at a PL space $P$ is the subset $V_1(P) \subset A\mathrm{Gr}^+_d(\mathbb{R}^N)(P)$ 
of all tuples $(W,f)$ such that each fiber of $W$ is disjoint from the origin $\mathbf{0} \in \mathbb{R}^N$. 
\end{itemize}

Once again, let us denote the underlying simplicial sets of $V_0$ and $V_1$ by $V_{0\bullet}$ and $V_{1\bullet}$ respectively. 
As explained in the proof of Proposition \ref{last.step}, the inclusion 
$V_{0\bullet}\cup V_{1\bullet} \hookrightarrow A\mathrm{Gr}^+_d(\mathbb{R}^N)_{\bullet}$ is a weak homotopy equivalence. Moreover, if $\mathcal{V}$ denotes the diagram
\begin{equation} \label{d.one}
\xymatrix{
V_{1\bullet} & \ar@{_{(}->}[l]  \hspace{0.1cm} V_{0\bullet}\cap V_ {1\bullet} \hspace{0.1cm} \ar@{^{(}->}[r] & V_{0\bullet},
}
\end{equation}
then we also have that the natural map $\mathrm{hocolim}(\mathcal{V}) \rightarrow V_{0\bullet}\cup V_{1\bullet}$ 
is a weak homotopy equivalence.   
Therefore, it suffices to show that $\mathrm{hocolim}(\mathcal{V})$ and 
the mapping cone of $F: \mathcal{S}_{\bullet} \rightarrow \mathrm{Gr}_d(\mathbb{R}^N)_{\bullet}$, which we will denote by $\mathcal{C}(F)$, 
are weak homotopy equivalent. 
Recall that $\mathcal{C}(F)$ is defined as the push-out of
\begin{equation} \label{d.two}
\xymatrix{
\mathcal{C}(\mathcal{S}_{\bullet}) & \ar@{_{(}->}[l]  \hspace{0.1cm} \mathcal{S}_{\bullet} \hspace{0.1cm} \ar@{^{(}->}[r] & \mathcal{M}(F).
}
\end{equation}
In this diagram, $\mathcal{M}(F)$ and $\mathcal{C}(\mathcal{S}_{\bullet})$ denote the mapping cylinder of 
$F$ and the cone of $\mathcal{S}_{\bullet}$ respectively. 
The right-hand map is the inclusion of $\mathcal{S}_{\bullet}$ into the \textit{top face} of $\mathcal{M}(F)$, and the left-hand map is the inclusion into the \textit{bottom face} of the cone $\mathcal{C}(\mathcal{S}_{\bullet})$. 

Consider now the diagram 
\begin{equation} \label{d.three}
\xymatrix{
V_{1\bullet} \ar[d] & \ar@{_{(}->}[l]  \hspace{0.1cm} V_{0\bullet}\cap V_{1\bullet} \hspace{0.1cm} \ar@{^{(}->}[r] \ar[d] & V_{0\bullet} \ar[d] \\
\mathcal{C}(\mathcal{S}_{\bullet}) & \ar@{_{(}->}[l]  \hspace{0.1cm} \mathcal{S}_{\bullet} \hspace{0.1cm} \ar@{^{(}->}[r] & \mathcal{M}(F),
}
\end{equation}
where the top row is $\mathcal{V}$, the bottom row is (\ref{d.two}), the left-vertical map is the  constant map which sends all simplices to the base-point of 
$\mathcal{C}(\mathcal{S}_{\bullet})$, the middle map is defined by $(W,f) \mapsto (W - f, -f)$, and the right-vertical map is the composition of the map
$V_{0\bullet} \rightarrow \mathrm{Gr}_d(\mathbb{R}^N)_{\bullet}$ defined by $(W,f) \mapsto W - f$ and the canonical inclusion 
$\mathrm{Gr}_d(\mathbb{R}^N)_{\bullet} \hookrightarrow \mathcal{M}(F)$. For the definition of the middle and right-vertical maps, 
we are using the same notation we introduced in the proof of Proposition \ref{last.step}.
Before we continue, let us discuss why each vertical map in (\ref{d.three}) is a weak homotopy equivalence. 
This is obvious for the left-vertical map, since both its domain and target are contractible. This is also evident for the middle-vertical map, since this
map is an isomorphism of simplicial sets (a fact that we also pointed out in the proof of Proposition \ref{last.step}).  
To see that the right-vertical map is also a weak homotopy equivalence, consider again the 
PL set  $\mathcal{T}$ we introduced in Definition \ref{total.spherical}, and let $\mathcal{T}_{\bullet}$ be its underlying simplicial set.  
The canonical inclusion $\mathrm{Gr}_d(\mathbb{R}^N)_{\bullet} \hookrightarrow \mathcal{M}(F)$ 
is evidently a weak homotopy equivalence. Thus, to prove that the right-vertical map in (\ref{d.three}) is a weak homotopy equivalence, 
we just need to show that the map $V_{0\bullet} \rightarrow \mathrm{Gr}_d(\mathbb{R}^N)_{\bullet}$ 
given by $(W,f) \mapsto W - f$ is also a weak homotopy equivalence. 
We can express this map 
$V_{0\bullet} \rightarrow \mathrm{Gr}_d(\mathbb{R}^N)_{\bullet}$ as the composition
\begin{equation} \label{d.four}
V_{0\bullet} \longrightarrow \mathcal{T}_{\bullet} \stackrel{G}{\longrightarrow} \mathrm{Gr}_d(\mathbb{R}^N)_{\bullet},
\end{equation}
where the left-hand map is defined by $(W,f) \mapsto (W-f, -f)$ and $G$ is the obvious forgetful map. As pointed out
in the proof of Proposition \ref{last.step}, the left-hand map in (\ref{d.four}) is an isomorphism of simplicial sets. 
On the other hand, we proved in Proposition \ref{spherical.equiv} that the map $G: \mathcal{T}_{\bullet} \rightarrow \mathrm{Gr}_d(\mathbb{R}^N)_{\bullet}$
is a Kan fibration. Moreover, using the Simplicial Approximation Theorem, we can show that the fibers of $G$ are contractible. 
Therefore, $G$ is a weak homotopy equivalence, and it follows that the right-vertical map in (\ref{d.three}) is also a weak homotopy equivalence. 

Now, diagram (\ref{d.three}) is not commutative, but the right-hand and left-hand squares are commutative up to homotopy. This allows us to define a map 
$\mathrm{hocolim}(\mathcal{V}) \rightarrow \mathcal{C}(\mathcal{F})$. Since all the vertical maps in (\ref{d.three}) are weak homotopy equivalences, it follows that 
$\mathrm{hocolim}(\mathcal{V}) \rightarrow \mathcal{C}(\mathcal{F})$ is also a weak homotopy equivalence, and we can therefore conclude that
$A\mathrm{Gr}^+_d(\mathbb{R}^N)_{\bullet}$ has the same weak homotopy type as $\mathcal{C}(\mathcal{F})$. 
\end{proof}

\subsection{Spaces of topological manifolds}

 We close this section by pointing out that it is possible to adapt the
 proofs of this article to prove an analogue of Theorem \ref{main.theorem.g} 
 for spaces of topological manifolds. To do this,  
one needs to work with the simplicial sets $\Psi_d^{\mathrm{Top}}(\mathbb{R}^N)_{\bullet}$ 
introduced in \cite{GLK}. 
A $p$-simplex of $\Psi_d^{\mathrm{Top}}(\mathbb{R}^N)_{\bullet}$ is a closed subspace $W$ of $\Delta^p\times \mathbb{R}^N$ with the property that the standard projection $\pi: \Delta^p\times \mathbb{R}^N \rightarrow \Delta^p$ and the restriction $\pi|_W$ form a relative submersion in the sense of Definition \ref{pl.sub.relative}. The relative dimensions of the pair $(\pi,\pi|_W)$ are $N$ and $d$. In particular, we can view any $p$-simplex of $\Psi_d^{\mathrm{Top}}(\mathbb{R}^N)_{\bullet}$ as a family of \textit{locally flat} topological $d$-dimensional submanifolds of $\mathbb{R}^N$, closed as subspaces, parameterized by $\Delta^p$. 
As also explained in \cite{GLK}, $\Psi_d^{\mathrm{Top}}(\mathbb{R}^N)_{\bullet}$ can be extended to a \textit{quasi-topological space} 
$\Psi_d^{\mathrm{Top}}(\mathbb{R}^N): \mathbf{Top}^{op}\rightarrow \mathbf{Sets}$, 
in the sense of \cite{Gr}. 

To define the topological analogue of $\widetilde{\Psi}_d(\mathbb{R}^N)$, one needs to use \textit{continuous} marking functions. That is, continuous functions which take values in $\mathbb{R}^N \cup\{\infty\}$ and satisfy conditions (i), (ii), and (iii) from Definition \ref{marking}.  Then, to prove Theorem \ref{main.theorem.g} in the topological category, one just needs to repeat the argument given in this paper, replacing all piecewise linear definitions and results with their topological counterparts. 
In particular, we need to make the following changes: 

\begin{itemize}
\item[$\bullet$] Replace piecewise linear bundles and microbundles with \textit{topological} bundles and microbundles respectively. 

\item[$\bullet$] Use the topological version of the Isotopy Extension Theorem whenever we use the piecewise linear version. 

\item[$\bullet$] Apply \textit{the Kister-Mazur Theorem} instead of the theorem of Kuiper and Lashof in each step where the latter is used. 
\end{itemize}

By making these adjustments and following the same argument we gave here, we obtain an alternate proof for the following result proven in \cite{GLK}. 

\theoremstyle{plain} \newtheorem{kupers}[grassm]{Theorem}

\begin{kupers} \label{kupers}

There is a weak equivalence of spectra
\[
\Psi^{\mathrm{Top}} \simeq \mathbf{MT}Top(d),
\]
where $\mathbf{MT}Top(d)$ denotes the topological Madsen-Tillmann spectrum. 
\end{kupers}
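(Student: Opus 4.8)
The plan is to reproduce the entire argument of §\ref{section.marking}–§\ref{section.grass} verbatim, replacing every piecewise linear notion by its topological counterpart. Concretely, starting from the quasi-topological space $\Psi_d^{\mathrm{Top}}(\mathbb{R}^N)\colon \mathbf{Top}^{op}\to\mathbf{Sets}$ of \cite{GLK} (whose simplices are $\Delta^p$-families of locally flat topological $d$-submanifolds of $\mathbb{R}^N$, organised via relative submersions in the sense of Definition \ref{pl.sub.relative}), I would introduce: $\widetilde{\Psi}_d^{\mathrm{Top}}(\mathbb{R}^N)$, built from \emph{continuous} marking functions $f\colon P\to\mathbb{R}^N\cup\{\infty\}$ satisfying conditions (i)–(iii) of Definition \ref{marking}; the topological (germ) Grassmannians $\Psi_d^{\circ,\mathrm{Top}}$, $\mathrm{Gr}_d^{\mathrm{Top}}$, $\widetilde{\mathrm{Gr}}_d^{\mathrm{Top}}$; the normal-data enhancements $\Psi_d^{\hat\circ,\mathrm{Top}}$, $\widetilde{\Psi}_d^{\hat\circ,\mathrm{Top}}$; the spherical-fibration spaces $\mathcal S^{\mathrm{Top}},\mathcal S'^{\mathrm{Top}}$; and the affine Grassmannians $A\mathrm{Gr}_d^{\mathrm{Top}}$, $A\mathrm{Gr}_d^{+,\mathrm{Top}}$, which assemble into the spectrum $\mathbf{MT}Top(d)$ with $N$-th level $A\mathrm{Gr}_d^{+,\mathrm{Top}}(\mathbb{R}^N)_{\bullet}$. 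The purely point-set steps transfer unchanged: that the underlying simplicial set of a quasi-topological space is Kan (the proof of Proposition \ref{quasi.kan} uses only a continuous retraction $\Delta^p\to F$ onto a horn), the sheaf/gluing properties, the concordance-to-homotopy comparison of Remark \ref{remark.concordance1}, and the subdivision-cover criterion of Proposition \ref{subdivision.cover}, all hold in the quasi-topological formalism of \cite{Gr} used in \cite{GLK}.

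Next I would carry out the topological analogue of Theorem \ref{marking.equivalence}: the forgetful map $\widetilde{\Psi}_d^{\mathrm{Top}}\to\Psi_d^{\mathrm{Top}}$ is a weak equivalence. The scanning machinery of §\ref{section.marking} — elementary $\epsilon$-scanning maps, the induced scanning maps of Definition \ref{control.scan}, Lemma \ref{scan.lemma}, and Proposition \ref{scanning.concordance} — goes through with ``piecewise linear homeomorphism'' replaced by ``homeomorphism'', the existence of scanning maps now coming from the topological Isotopy Extension Theorem; the continuity arguments of Claims A and B in the proof of Lemma \ref{scan.lemma} are unaffected. The inputs about ambient structure are Lemma \ref{marking.germ} and Lemma \ref{marking.unique}: the first produces a partially defined marking function near $Z(W)$ and uses only that a topological submersion is a microfibration (with a continuous microlift); the second, which gives homotopy-uniqueness of markings near $Z(W)$, uses in place of Kuiper–Lashof the \emph{Kister–Mazur theorem}, both in its absolute form (a topological $\mathbb{R}^d$-microbundle contains an $\mathbb{R}^d$-bundle) and in the relative form for $(\mathbb{R}^N,\mathbb{R}^d)$-microbundle pairs arising from locally flat submanifolds.

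The second half is the topological analogue of Theorem \ref{hatcher}: $\mathbf{MT}Top(d)\to\widetilde{\Psi}_d^{\mathrm{Top}}$ is a weak equivalence. As in §\ref{section.grass} it factors through $\widetilde{\Psi}_d^{\hat\circ,\mathrm{Top}}$ and reduces to the topological versions of Proposition \ref{forget.normal2} and Proposition \ref{last.step}. Those in turn require: (a) the equivalence $\mathrm{Gr}_d^{\mathrm{Top}}(\mathbb{R}^N)_{\bullet}\hookrightarrow\Psi_d^{\circ,\mathrm{Top}}(\mathbb{R}^N)_{\bullet}$, proven exactly as Corollary \ref{htpy.type.grass} by passing to the germ Grassmannian (Proposition \ref{germ.equivalence}, a Kan-fibration-plus-contractible-fibres argument using the Isotopy Extension Theorem and scanning) together with the description of both Grassmannians as homogeneous spaces of $\mathcal H^{\mathrm{Top}}(\mathbb{R}^N)$ resp. $\mathrm{Top}(\mathbb{R}^N)$ and the topological counterpart of Lemma \ref{KuiperLemma}, namely that the germ-quotient maps $\mathcal H^{\mathrm{Top}}(\mathbb{R}^N)\to\mathrm{Top}(\mathbb{R}^N)$ and $\mathcal H^{\mathrm{Top}}(\mathbb{R}^N,\mathbb{R}^d)\to\mathrm{Top}(\mathbb{R}^N,\mathbb{R}^d)$ are weak equivalences (Kister–Mazur again); and (b) that $\mathcal S^{\mathrm{Top}}_{\bullet}\to\mathrm{Gr}_d^{\mathrm{Top}}(\mathbb{R}^N)_{\bullet}$ and $\mathcal S'^{\mathrm{Top}}_{\bullet}\to\Psi_d^{\hat\circ,\mathrm{Top}}(\mathbb{R}^N)_{\bullet}$ are (spherical) Kan fibrations, which is Proposition \ref{spherical.equiv} verbatim, using contractibility of $\Delta^p$ and continuous bundle homeomorphisms. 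Composing these two halves, as in the proof of Theorem \ref{main.theorem.g}, yields the required weak equivalence $\mathbf{MT}Top(d)\simeq\Psi^{\mathrm{Top}}$.

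I expect the one point requiring genuine care — rather than mechanical translation — to be the relative Kister–Mazur input: that an $(\mathbb{R}^N,\mathbb{R}^d)$-microbundle pair coming from a $\Delta^p$-family of locally flat topological submanifolds contains a bundle pair, essentially uniquely. For a single submanifold this is the existence and uniqueness of topological normal microbundles; one must check that it persists in families and, crucially, that the resulting bundles can be glued along closed covers so as to be compatible with the quasi-topological structure — since, unlike in the PL category, one cannot triangulate the parameter space and reduce to simplices (as in Proposition \ref{pre.psi.quasi}), and must instead argue with open covers and the quasi-topological gluing axiom. A secondary, more bookkeeping issue is confirming that each foundational lemma quoted from \cite{GL} (the Kan property, Proposition \ref{subdivision.cover}, the concordance-to-homotopy comparison) has its quasi-topological counterpart available in \cite{GLK} or is reproved using only continuous retractions and open covers; these are precisely the places where the PL-specific subdivision technology of \cite{GL} was being used, and they should present no essential difficulty.
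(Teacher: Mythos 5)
Your proposal follows exactly the route the paper takes: the paper's own ``proof'' is precisely the substitution scheme you describe --- continuous marking functions satisfying only conditions (i)--(iii) of Definition \ref{marking}, the topological Isotopy Extension Theorem in place of the PL one, and the Kister--Mazur theorem (in both absolute and pair form) wherever Kuiper--Lashof is invoked --- followed by a verbatim rerun of the arguments of \S\ref{section.marking}--\S\ref{section.grass} in the quasi-topological formalism of \cite{GLK}. Your elaboration is in fact more detailed than the paper's sketch, and the caveats you flag (the relative Kister--Mazur input in families, and replacing the triangulation-based gluing of Proposition \ref{pre.psi.quasi} by the quasi-topological gluing axiom) are exactly the points where care is needed but which the paper defers to \cite{GLK}.
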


The $N$-th level of $\mathbf{MT}Top(d)$ 
is the mapping cone of the forgetful map 
$F: \mathcal{S}^{\mathrm{Top}}_{\bullet} \rightarrow \mathrm{Gr}_d^{\mathrm{Top}}(\mathbb{R}^N)_{\bullet}$, where
$\mathrm{Gr}_d^{\mathrm{Top}}(\mathbb{R}^N)_{\bullet}$ is the \textit{topological Grassmannian} and 
$\mathcal{S}^{\mathrm{Top}}_{\bullet}$ is the topological analogue of the underlying simplicial set $\mathcal{S}_{\bullet}$
of the quasi-PL space $\mathcal{S}$ introduced in Definition \ref{total.spherical}; i.e., 
a $p$-simplex of $\mathrm{Gr}_d^{\mathrm{Top}}(\mathbb{R}^N)_{\bullet}$  is an element 
$W \in \Psi_d^{\mathrm{Top}}(\mathbb{R}^N)_p$ containing the zero-section 
$\Delta^p \times \{ \mathbf{0}\}$ such that the diagram
\[
\xymatrix{
\Delta^p \hspace{0.1cm} \ar@{^{(}->}[r]^{\hspace{-1cm}s_{\mathbf{0}}} & 
(\Delta^p\times\mathbb{R}^N,W) \ar[r]^{\hspace{0.7cm}\pi} & \Delta^p
}
\]
is a topological $(\mathbb{R}^N, \mathbb{R}^d)$-bundle, and a $p$-simplex
of $\mathcal{S}^{\mathrm{Top}}_{\bullet}$ is a pair $(W, f)$ where 
$W \in \mathrm{Gr}_d^{\mathrm{Top}}(\mathbb{R}^N)_p$ and 
$f: \Delta^p \rightarrow \mathbb{R}^N$ is a continuous function 
with the property that $f(x) \notin W_x$ for all $x \in \Delta^p$ 
(see also \S 7.2 of \cite{GLK}).

\appendix 

\section{Basic notions from piecewise linear topology}  \label{appendix.pl} 
In this appendix, we will collect some of the basic definitions and results from piecewise linear topology that we used in this paper. 

\subsection{PL submersions} \label{sec.pl.sub} In the definition of the space of manifolds $\Psi_d(U)$, we used the following notion. 

\theoremstyle{definition}  \newtheorem{pl.sub}{Definition}[section]

\begin{pl.sub} \label{pl.sub}
Let $f:P \rightarrow Q$ be a piecewise linear map. We say that $f$ is a \textit{piecewise linear submersion of  codimension $d$} if, for any $x \in P$, we can find an open neighborhood $V \subseteq Q$ of $f(x)$ and a piecewise linear map $h: V \times \mathbb{R}^d \rightarrow P$ satisfying the following:  

\begin{itemize}
\item[(i)] $h$ is an open piecewise linear embedding and 
the image of $h$ is contained in $f^{-1}(V)$.

\item[(ii)] $h(f(x), \mathbf{0}) = x$, where $\mathbf{0}$ is the origin in $\mathbb{R}^d$. 

\item[(iii)] $h$ makes the following diagram commutative:

\[
 \xymatrix{ V \times \mathbb{R}^d \ar[rr]^{h} \ar[rd] & & f^{-1}(V) \ar[dl]^{f} \\
  & V & 
 }
\]

where the left diagonal map is the standard projection onto $V$.  
\end{itemize}

A map $h: V \times \mathbb{R}^d \rightarrow P$ satisfying conditions (i), (ii) and (iii) is called a \textit{piecewise linear submersion chart around x.} It follows immediately from this definition that the image of any PL submersion is open. 
\end{pl.sub}

In this paper, we also worked with pairs of submersions which are compatible in the following sense. 

\theoremstyle{definition} \newtheorem{pl.sub.relative}[pl.sub]{Definition}

\begin{pl.sub.relative} \label{pl.sub.relative}
(see \cite{Stern}) Let $P$ and $Q$ be PL spaces, and let $P'$ be a PL subspace of $P$. 
A piecewise linear map of pairs $f: (P,P') \rightarrow (Q,Q)$ is a 
\textit{relative PL submersion of codimensions N and d} if it satisfies the following:

\begin{itemize}
\item[(i)] $f: P \rightarrow Q$ is a PL submersion of codimension $N$. 
 
\item[(ii)]  For each $x$ in $P'$, there is an open neighborhood $V$ of $f(x)$ in $Q$ and an open piecewise linear embedding $h:V \times (\mathbb{R}^N, \mathbb{R}^d) \rightarrow (P,P')$ such that $h(f(x), \mathbf{0}) = x$ 
and the composition $f\circ h$ is the standard projection onto $V$. 
\end{itemize}  
\end{pl.sub.relative} 

One important property of PL submersions used in Section \S \ref{section.marking} is the following. 

\theoremstyle{plain} \newtheorem{prop.microfib}[pl.sub]{Proposition}

\begin{prop.microfib} \label{prop.microfib}
Any PL submersion $f: W \rightarrow Q$ of codimension $d$  is a microfibration. 
Moreover, any microlift with respect to $f: W \rightarrow Q$ can be assumed to be piecewise linear. 
\end{prop.microfib}

Recall that a microfibration is a map $f:W \rightarrow Q$ which satisfies the \textit{microlifting property}. That is, given any compact PL space $P$ and a commutative diagram of the form
\[
\xymatrix{ P \hspace{0.1cm} \ar@{^{(}->}[d] \ar[r] & W \ar[d]^{f} \\
P\times[0,1] \ar[r] & Q,
}
\]
we can find a positive value $0 < \epsilon < 1$ and a map $\tilde{g}:P\times [0,\epsilon]\rightarrow W$ 
which lifts the restriction of the bottom map on $P\times [0,\epsilon]$. We illustrate this in the following diagram: 
\[
\xymatrix{ & P \hspace{0.1cm} \ar@{^{(}->}[d] \ar[r] & W \ar[d]^{f} \\
P\times[0,\epsilon] \ar[r] \ar@{-->}[urr]^{\tilde{g}}  & P\times[0,1] \ar[r] & Q.
}
\]
The method to prove Proposition \ref{prop.microfib} is similar to the one used in \cite{Hatcher} to show that a fiber bundle has 
the homotopy lifting property (see Proposition 4.48 in \cite{Hatcher}). 
Moreover, by using the Simplicial Approximation Theorem, 
the microlift $\tilde{g}:P\times [0,\epsilon]\rightarrow W$ 
can be assumed to be piecewise linear if $f:W \rightarrow Q$ is a PL submersion. 

\theoremstyle{definition} \newtheorem{prop.microfib.rem}[pl.sub]{Remark}

\begin{prop.microfib.rem} \label{prop.microfib.rem}
The second statement in Proposition \ref{prop.microfib} can actually be made more general. Namely, 
if $f: W \rightarrow Q$ is a PL submersion and $g: Q' \rightarrow Q$ is a PL map for which there exists
a lift $\tilde{g}: Q' \rightarrow W$ of the form
\[
\xymatrix{ & W \ar[d]^{f} \\
Q' \ar[r]^{g} \ar@{-->}[ur]^{\tilde{g}} & Q, 
}
\]
then we can guarantee that $\tilde{g}$ is also PL via the 
Simplicial Approximation Theorem. 
\end{prop.microfib.rem}

\subsection{PL microbundles} \label{sec.pl.micro}

In the piecewise linear category, there are two analogues of the notion of smooth vector bundle. 
We introduce both of these in the following definition. 

\theoremstyle{definition} \newtheorem{pl.microbundle}[pl.sub]{Definition}

\begin{pl.microbundle} \label{pl.microbundle}

A diagram of PL maps
\begin{equation} \label{microbundle}
\xymatrix{ \xi^d: P \hspace{0.1cm} \ar@{^{(}->}[r]^{\iota} & E(\xi) \ar[r]^{\pi} & P
}
\end{equation}
is a \textit{d-dimensional PL microbundle} if it satisfies the following: 

\begin{itemize}
\item[(i)] The image of $\iota$ is closed in $E(\xi)$ and $\pi\circ\iota = \mathrm{Id}_P$. 

\item[(ii)] For any $x\in P$, we can find open neighborhoods $U \subseteq P$ and $V \subseteq E(\xi)$ of $x$ and $\iota(x)$ respectively for which there exists a PL homeomorphism 
$h: U\times \mathbb{R}^d \rightarrow V$ which makes the following diagram commute: 
\begin{equation} \label{diag.microbundle}
\xymatrix{ & U\times\mathbb{R}^d \ar[dd]^{h} \ar[dr]^{p_2} & \\
U \hspace{0.1cm} \ar@{^{(}->}[ur]^{i_{\mathbf{0}}} \ar@{^{(}->}[dr]^{\iota|_U} & & U\\
& V \ar[ur]^{\pi|_{V}} &
}
\end{equation}
In this diagram, $i_{\mathbf{0}}: U \hookrightarrow U\times\mathbb{R}^d$ and $p_2: U\times\mathbb{R}^d \rightarrow U$ are defined respectively by $i_{\mathbf{0}}(x) = (x, \mathbf{0})$ and $p_2(x,y) = x$. As usual, $\mathbf{0}$ denotes the origin of $\mathbb{R}^d$. 
\end{itemize}

A map $h: U\times \mathbb{R}^d \rightarrow V$ which makes (\ref{diag.microbundle}) commute is called a \textit{microbundle chart}. We say that $\xi$ is a \textit{PL $\mathbb{R}^d$-bundle} if in (ii) 
we can take $V = \pi^{-1}(U)$. 
\end{pl.microbundle}

Two PL microbundles $\xi: P \stackrel{\iota_{\xi}}{\longrightarrow} E(\xi) \stackrel{\pi_{\xi}}{\longrightarrow} P$
and $\xi': P \stackrel{\iota_{\xi'}}{\longrightarrow} E(\xi') \stackrel{\pi_{\xi'}}{\longrightarrow}P$ of dimension $d$
over $P$ are said to be \textit{isomorphic} if we can find open neighborhoods $V_0 \subseteq E(\xi)$ and $V_1 \subseteq E(\xi')$ of $\iota_{\xi}(P)$ and $\iota_{\xi'}(P)$ respectively for which there is a  piecewise linear homeomorphism $h: V_0 \rightarrow V_1$ commuting with all the relevant maps. The following fundamental theorem of N. Kuiper and R. Lashof (see \cite{KL1}) says that any $d$-dimensional PL microbundle is isomorphic to a PL $\mathbb{R}^d$-bundle. 
 
 \theoremstyle{plain} \newtheorem{kuiper.lashof}[pl.sub]{Theorem}
 
\begin{kuiper.lashof} \label{kuiper.lashof}
Given any PL microbundle $\xi^d: P \stackrel{\iota}{\longrightarrow} E(\xi) \stackrel{\pi}{\longrightarrow}P$, there is a neighborhood $V$ of $\iota(P)$ in $E(\xi)$ such that 
$\eta: P \stackrel{\iota}{\longrightarrow} V \stackrel{\pi|_V}{\longrightarrow}P$ is a PL $\mathbb{R}^d$-bundle. 
Moreover, any two 
PL $\mathbb{R}^d$-bundles $\eta_0$ and $\eta_1$ contained in $\xi$ are isomorphic, i.e., if $V_0 \subseteq E(\xi)$ and $V_1 \subseteq E(\xi)$ are the total
spaces of $\eta_0$ and $\eta_1$ respectively, then there exists a PL homeomorphism $h: V_0 \rightarrow V_1$ which commutes with all the structure maps. 
\end{kuiper.lashof}

In this article, we also found it necessary to work with pairs of microbundles. More precisely, a \textit{PL (n,d)-microbundle pair} is a diagram
of pairs
\begin{equation} \label{diag.micro.pair}
\xymatrix{ (\xi^n, \eta^d): (P,P) \hspace{0.1cm} \ar@{^{(}->}[r]^{\hspace{0.5cm}\iota} & (E,E') \ar[r]^{\pi} & (P,P) 
}
\end{equation}
such that
 $\xi^n: P \stackrel{\iota}{\longrightarrow} E \stackrel{\pi}{\longrightarrow} P$
and $\eta^d: P \stackrel{\iota}{\longrightarrow} E' \stackrel{\pi|_{E'}}{\longrightarrow}P$  are PL microbundles over the PL space $P$ and, for each $x$ in $P$, there is a microbundle chart  $h: U\times \mathbb{R}^n \rightarrow V$ for $\xi^n$ whose restriction on $U\times \mathbb{R}^d$ is a microbundle chart for $\eta^d$. 
Similarly, we say that $(\xi^n, \eta^d)$ is a \textit{PL $(\mathbb{R}^n, \mathbb{R}^d)$-bundle} if for the chart $h: U\times \mathbb{R}^n \rightarrow V$ we can take $V = \pi^{-1}(U)$. Typically, in a diagram of the form (\ref{diag.micro.pair}), we shall write $P$ instead of 
$(P,P)$. 

The analogue of Theorem \ref{kuiper.lashof} for microbundle pairs is also proven in \cite{KL1}. 

\theoremstyle{plain} \newtheorem{kuiper.lashof2}[pl.sub]{Theorem}

\begin{kuiper.lashof2} \label{kuiper.lashof2}
Given a PL $(n,d)$-microbundle pair  $(\xi, \xi'): P \stackrel{\iota}{\longrightarrow} (E,E') \stackrel{\pi}{\longrightarrow} P$, there is a neighborhood $V$ of $\iota(P)$ in $E$ such that $(\eta, \eta'): P \stackrel{\iota}{\longrightarrow} (V,E'\cap V) \stackrel{\pi|_V}{\longrightarrow}P$ is a PL 
$(\mathbb{R}^n, \mathbb{R}^d)$-bundle.
Also, any two PL $(\mathbb{R}^n, \mathbb{R}^d)$-bundles $(\eta_0, \eta_0')$ and $(\eta_1, \eta_1')$ contained in $(\xi,\xi')$ are isomorphic, i.e., 
if $V_0 \subseteq E(\xi)$ and $V_1 \subseteq E(\xi)$ are the total
spaces of $\eta_0$ and $\eta_1$ respectively, then there exists a PL homeomorphism $h: V_0 \rightarrow V_1$ which commutes with all the structure maps
and which maps $E'\cap V_0$ to $E'\cap V_1$.  
\end{kuiper.lashof2}

\theoremstyle{definition} \newtheorem{isotopy.bund}[pl.sub]{Remark}

\begin{isotopy.bund} \label{isotopy.bund}
Let $\mathcal{H}(\mathbb{R}^d)$ and $\mathrm{PL}(\mathbb{R}^d)$ be the simplicial sets introduced in Definitions \ref{automorphisms} and \ref{germ.automorphisms} respectively. Also, let $\mathcal{E}_{\mathbb{R}^d}(\mathbb{R}^d)$ be the simplicial set of bundle monomorphisms $\Delta^p \times \mathbb{R}^d \rightarrow \Delta^p \times \mathbb{R}^d$
which preserve the zero-section (see Convention \ref{kl.convention}). Note that the quotient maps 
$\mathcal{H}(\mathbb{R}^d) \rightarrow \mathrm{PL}(\mathbb{R}^d)$ and $\mathcal{E}_{\mathbb{R}^d}(\mathbb{R}^d) \rightarrow \mathrm{PL}(\mathbb{R}^d)$  fit in a commutative diagram of the following form:
\begin{equation}
\xymatrix{ \mathcal{H}(\mathbb{R}^d) \ar@{^{(}->}[dd] \ar[dr]^{\simeq} & \\
 & \mathrm{PL}(\mathbb{R}^d) \\
 \mathcal{E}_{\mathbb{R}^d}(\mathbb{R}^d). \ar[ur] & 
}
\end{equation}
Using standard scaling arguments, we can prove that the quotient map  $\mathcal{E}_{\mathbb{R}^d}(\mathbb{R}^d) \rightarrow \mathrm{PL}(\mathbb{R}^d)$ is a weak homotopy equivalence. Therefore, by Lemma \ref{KuiperLemma}, we also have that the inclusion $\mathcal{H}(\mathbb{R}^d) \hookrightarrow \mathcal{E}_{\mathbb{R}^d}(\mathbb{R}^d)$ is a weak homotopy equivalence. Using this fact, one can prove that any two $\mathbb{R}^d$-bundles $\eta_0$ and $\eta_1$ contained in a  PL microbundle 
$\xi^d: P \stackrel{\iota}{\longrightarrow} E(\xi) \stackrel{\pi}{\longrightarrow}$ are \textit{isotopic}. That is, if $V_0, V_1 \subseteq E(\xi)$ are the total spaces of $\eta_0$ and $\eta_1$ respectively, then we can find an open piecewise linear embedding $H: V_0\times [0,1] \rightarrow E(\xi)\times [0,1]$ which commutes with the projection onto $P \times [0,1]$, preserves the zero-section,  maps $V_0$ identically to itself at time $t = 0$, and
maps   $V_0$ PL homeomorphically onto $V_1$ at time $t=1$. A similar discussion also works to show that any two 
PL $(\mathbb{R}^N,\mathbb{R}^d)$-bundles contained in a microbundle pair 
$(\xi, \xi')$ are isotopic.  
\end{isotopy.bund}

\subsection{Regular neighborhoods} \label{sec.regular}

In this appendix,  we collect the essential facts about regular neighborhoods that we used in this paper. To state the definition of regular neighborhood, we first need to introduce some terminology. Throughout this appendix, $I^n$ will denote the $n$-dimensional PL ball $[0,1]^n$. Also, in the next definition, we shall identify the 
PL ball $I^{n-1}$ with the face $[0,1]^{n-1}\times \{0\}$ of $I^n$, and thus regard $I^{n-1}$ as a PL subspace of $I^n$ contained in the boundary $\partial I^n$.

\theoremstyle{definition} \newtheorem{elem.collapse}[pl.sub]{Definition}

\begin{elem.collapse} \label{elem.collapse}
Fix a PL space $X$. If $Y$ is a PL subspace of $X$ such that the pair 
$(\mathrm{cl}(X -  Y), \mathrm{cl}(X -  Y)\cap Y)$ is PL homeomorphic to the pair 
$(I^n, I^{n-1})$ for some $n$, then we say that there is an \textit{elementary collapse} from $X$ to $Y$,
and we write  $X  \searrow_e Y$. More generally, we say that $X$ \textit{collapses to} $Y$, and write $X \searrow Y$,
if there is a sequence of elementary collapses
\[
X = X_0 \searrow_e X_1 \searrow_e X_2 \searrow_e \ldots\ldots \searrow_e X_p = Y.
\] 
If $X \searrow Y$, then we can also say that $Y$ \textit{expands to} $X$, and write $Y \nearrow X$. Similarly, an elementary 
collapse $X \searrow_e Y$ can also be called an \textit{elementary expansion} from $Y$ to $X$, which we denote by 
$Y \nearrow^e X$.  
\end{elem.collapse}

Typically, regular neighborhoods are defined in the literature using the notion of derived subdivision, which involves taking triangulations of PL spaces. However, in this appendix, we opt for a triangulation-free characterization of regular neighborhoods, which we will give in the next definition. 

\theoremstyle{definition} \newtheorem{reg.neighborhood}[pl.sub]{Definition}

\begin{reg.neighborhood} \label{reg.neighborhood} 
Consider a PL manifold $M$ of dimension $m$, possibly with boundary, and let $X$ be a compact PL subspace of $M$ contained in 
$M - \partial M$. A PL subspace $R$ of $M$ is called a \textit{regular neighborhood} of $X$ in $M$ if: 

\begin{itemize}
\item[(i)] $R \subseteq M - \partial M$ and $R$ is a compact neighborhood of $X$ in $M$. 

\item[(ii)] $R$ is an $m$-dimensional PL manifold with boundary. 

\item[(iii)] $R \searrow X$ (or equivalently, $X \nearrow R$). 
\end{itemize}
\end{reg.neighborhood}

Take a PL manifold $M$. It is a standard fact of PL topology that any compact PL subspace 
$X \subseteq M - \partial M$ admits a regular neighborhood in $M$
(see Theorem 3.8 in \cite{plhud} for a proof of this result).  
In fact, as also proven in Theorem 3.8 of \cite{plhud},
any two regular neighborhoods $R_0$ and $R_1$ of $X$ in $M$ are \textit{ambient isotopic,} i.e., there is an 
isotopy $h_t: M \rightarrow M$ of PL homeomorphisms such that $h_0 = \mathrm{Id}_M$, $h_1(R_0) = R_1$, and
$h_t(x) = x$ for all $x \in X$ and $t \in [0,1]$.  

\theoremstyle{definition} \newtheorem{reg.thin}[pl.sub]{Remark}

\begin{reg.thin} \label{reg.thin}
Consider again a PL manifold $M$, possibly with $\partial M \neq \varnothing$. 
Using standard subdivision techniques, it can be proven that any compact PL subspace $X \subseteq M - \partial M$ 
admits regular neighborhoods that are arbitrarily \textit{thin,} i.e.,
 if $U$ is an arbitrary open set in $M$ containing $X$,
then it is always possible to find a regular neighborhood $R$ of $X$ such that $R \subset U$. 
\end{reg.thin}

The following proposition (whose proof can be found in \cite{Br}) provides a useful link
between regular neighborhoods and mapping cylinders.

\theoremstyle{plain} \newtheorem{map.cylinder}[pl.sub]{Proposition}

\begin{map.cylinder} \label{map.cylinder}
Let $M$ be a PL manifold (possibly with $\partial M \neq \varnothing$) and 
let $X$ be a compact PL subspace contained in $M - \partial M$.
If $R$ is a regular neighborhood of $X$ in $M$, then $R$ is homeomorphic to the mapping cylinder $\mathrm{Cyl}(f)$ 
of a piecewise linear map $f: \partial R \rightarrow X$. 
\end{map.cylinder}

The previous result is proven in \cite{Br} assuming that $M$ is an arbitrary PL space, not necessarily a PL manifold. However, in this paper (e.g., the proof of Lemma \ref{marking.germ}), we only needed this result in the case when $M$
was assumed to be a PL manifold.  We close this section of the appendix with the following result regarding nested regular neighborhoods, which we used in
Example \ref{nested.regular}. A proof of this proposition can be found in Chapter 3 of \cite{RS}. 

\theoremstyle{plain} \newtheorem{nested.regular.thm}[pl.sub]{Proposition}

\begin{nested.regular.thm} \label{nested.regular.thm}
Fix  a PL manifold $M$ (possibly with $\partial M \neq \varnothing$)
and let $X$ be a compact PL subspace in $M - \partial M$. If $R_0$ and $R_1$ are two 
regular neighborhoods of $X$ in $M$ such that $R_0 \subset \mathrm{Int}\hspace{0.02cm}R_1$, then there exists 
a PL homeomorphism $h: \partial R_0 \times [0,1] \rightarrow \mathrm{cl}(R_1 - R_0)$ such that $h_0(x) = x$ for all
$x \in \partial R_0$ and $h_1(\partial R_0) = \partial R_1$.  
\end{nested.regular.thm}

\section{Subdivision methods} \label{subdivision.gl}

The purpose of this appendix is to outline the subdivision methods developed in \cite{GL}. To do so, we first need to fix the following notation:   

\begin{itemize}
\item[(i)]  $\Delta_{\mathrm{inj}}$ will denote the subcategory of $\Delta$ whose set of objects is equal to $\mathrm{Obj}(\Delta)$ and whose morphisms are all the injective functions in $\Delta$. 

\item[(ii)]  Let $\mathcal{I}: \Delta \hookrightarrow \mathbf{PL}$ be the
inclusion functor defined in Convention \ref{first.conv}, and let 
$\mathcal{I}_{\mathrm{inj}}$ denote the restriction of $\mathcal{I}$ on $\Delta_{\mathrm{inj}}$. 
For any PL set $\mathcal{F}: \mathbf{PL}^{op} \rightarrow \mathbf{Sets}$, we will denote by $\mathcal{F}_*$ the \textit{semi-simplicial set} obtained by 
pre-composing $\mathcal{F}$ with the inclusion $\mathcal{I}_{\mathrm{inj}}^{op}: \Delta^{op}_{\mathrm{inj}} \hookrightarrow \mathbf{PL}^{op}$.  

\item[(iii)]  $\Delta^p_*$ will denote the 
semi-simplicial set induced by the usual order on the set of vertices of 
the geometric simplex $\Delta^p$.    
\end{itemize}

For any positive integer $r$, let $\mathrm{sd}^r\Delta^p$ be the $r$-th barycentric subdivision of $\Delta^p$, and denote by $v_{F}$ the barycentric point of a face $F$ of $\mathrm{sd}^{r-1}\Delta^p$. 
We can define an order relation on the set of vertices of $\mathrm{sd}^r\Delta^p$ by setting $v_{F_0} \leq v_{F_1}$ if and only if 
$F_0$ is a face of $F_1$. 
We will denote by $\mathrm{sd}^r\Delta^p_*$ the semi-simplicial set induced by this order. Note that any ascending chain
\[
v_{F_0} < v_{F_1} < \ldots < v_{F_k}
\]
of $k+1$ vertices will span a $k$-simplex of $\mathrm{sd}^r\Delta^p$.  
Also, it is straightforward to verify that $\mathrm{sd}^r\Delta^p_*$ is isomorphic to the usual $r$-th barycentric subdivision of $\Delta^p_*$ (see \cite{RW} for a treatment of barycentric subdivisions of semi-simplicial sets). 
 
In Proposition \ref{subdivision.map} below, we will collect a couple of results proven in Section \S 2.4 of \cite{GL}. 
Before stating this proposition, we need to introduce some notation: 
Fix a PL set $\mathcal{F}$ and suppose that $f: \Delta^p_* \rightarrow \mathcal{F}_*$ 
is the classifying map of a $p$-simplex $W \in \mathcal{F}_p$ of the semi-simplicial set $\mathcal{F}_{*}$.
Then, given any positive integer $r $, we define $\widetilde{f}_r: \mathrm{sd}^r\Delta^p_* \rightarrow \mathcal{F}_*$
to be the morphism of semi-simplicial sets which sends  a $k$-simplex $v_{F_0} < v_{F_1} < \ldots < v_{F_k}$
of $\mathrm{sd}^r\Delta^p_*$ to the pull-back of $W$ along the composition  
\[
\xymatrix{ 
\Delta^k \ar[r]^{\hspace{-1.7cm}\cong} & \mathbf{conv}(v_{F_0}, v_{F_1} , \ldots, v_{F_k}) \hspace{0.1cm} \ar@{^{(}->}[r] & \Delta^p,
}
\]
where $\mathbf{conv}(v_{F_0}, v_{F_1} , \ldots, v_{F_k})$ is the convex hull of the points $v_{F_0}, v_{F_1} , \ldots, v_{F_k}$,
and the first map is the linear isomorphism 
which maps the vertices of $\Delta^k$ to those
of  $\mathbf{conv}(v_{F_0}, v_{F_1} , \ldots, v_{F_k})$ in an order-preserving fashion. 
Borrowing the terminology from \cite{GL}, we call this map $\widetilde{f}_r$ \textit{the classifying map of $W$ with respect to the triangulation 
$(\mathrm{sd}^r\Delta^p, \mathrm{Id}_{\Delta^p})$
of} $\Delta^p$ (see Definition 2.19 in \cite{GL}).

\theoremstyle{plain} \newtheorem{subdivision.map}[pl.sub]{Proposition}

\begin{subdivision.map} \label{subdivision.map}
For any PL set $\mathcal{F}: \mathbf{PL}^{op} \rightarrow \mathbf{Sets}$, there 
exists an explicit map $\rho: |\mathcal{F}_*| \rightarrow  |\mathcal{F}_*|$ satisfying the following properties: 

\begin{itemize}
\item[(i)] There exists a homotopy $\mathcal{H}$ between the identity map $\mathrm{Id}_{|\mathcal{F}_{*}|}$
and $\rho$. 

\item[(ii)]  For any $p$-simplex $W$ of $\mathcal{F}_*$ and any positive integer $r$, 
the following diagram is commutative: 
\begin{equation}
\xymatrix{
|\Delta^p_*| \ar[r]^{|f|} \ar[d]_{\cong} & |\mathcal{F}_{*}| \ar[d]^{\rho^r} \\
|\mathrm{sd}^r\Delta^p_*| \ar[r]^{\hspace{0.1cm}|\widetilde{f}_r|} &  |\mathcal{F}_{*}|.}
\end{equation}
In this diagram, the top map is the geometric realization of the classifying map
$f: \Delta^p_* \rightarrow \mathcal{F}_*$ of $W$,
the right-vertical map is the $r$-fold composition of $\rho$ with itself, the left-vertical map 
is the inverse of the canonical homeomorphism from $|\mathrm{sd}^r\Delta^p_*|$ to $|\Delta^p_*|$, and
the bottom map is the geometric realization of the map  $\widetilde{f}_r$ 
defined before the statement of this proposition. 
\end{itemize}
\end{subdivision.map}

In \cite{GL}, we call the map $\rho: |\mathcal{F}_*| \rightarrow  |\mathcal{F}_*|$ appearing in this proposition
\textit{the subdivision map of} $\mathcal{F}$.
An explicit construction of the homotopy $\mathcal{H}$ from part (i) is given in the proof of Proposition 2.25 in \cite{GL}. 
On the other hand, part (ii) is a special case of Proposition 2.27 of \cite{GL}.
Namely, it is the special case obtained by applying that proposition to the PL space $\Delta^p$ and the canonical triangulation of $\Delta^p$.

\theoremstyle{definition} \newtheorem{subdivision.remark}[pl.sub]{Remark}

\begin{subdivision.remark} \label{subdivision.remark}
Consider an arbitrary PL set $\mathcal{F}$. The construction of the subdivision map $\rho: |\mathcal{F}_*| \rightarrow  |\mathcal{F}_*|$
of $\mathcal{F}$ is given in Note 2.23 of \cite{GL}. 
Also, as mentioned before, the 
homotopy $\mathcal{H}$
appearing in part (i) of Proposition \ref{subdivision.map}
was constructed in the proof of Proposition 2.25 of \cite{GL}. 
Now, suppose
that $\mathcal{F}'$ is a PL subset of $\mathcal{F}$, and let 
$\rho': |\mathcal{F}'_*| \rightarrow  |\mathcal{F}'_*|$
be the subdivision map of $\mathcal{F}'$.
Moreover, let $\mathcal{H}'$ be the homotopy 
between $\mathrm{Id}_{|\mathcal{F}'_{*}|}$ and $\rho'$ obtained by following the construction given in the proof of Proposition 2.25 of \cite{GL}.  
Even though it is not stated as a result in \cite{GL}, 
we claim that the constructions 
given in Note 2.23 and the proof of Proposition 2.25 of \cite{GL}
are respectful with respect to PL subsets. In other words, 
the maps $\rho$, $\mathcal{H}$ corresponding to $\mathcal{F}$ and the maps 
$\rho'$, $\mathcal{H}'$ corresponding to $\mathcal{F}'$ are related in the following manner:  
\begin{equation} \label{plset.pair}
\rho|_{|\mathcal{F}'_*|} = \rho'
\qquad \mathcal{H} |_{[0,1] \times |\mathcal{F}'_*|} = \mathcal{H}'.
\end{equation}
We reiterate that this fact was not presented as a result in \cite{GL}. 
Nevertheless, a careful inspection of the constructions 
given in Note 2.23 and the proof of Proposition 2.25 in \cite{GL}
would reveal to the reader that the identities 
in (\ref{plset.pair}) indeed hold for any pair $(\mathcal{F}, \mathcal{F}')$ of PL sets. 
\end{subdivision.remark}

We are now going to use Proposition \ref{subdivision.map} 
(as well as Remark \ref{subdivision.remark}) to prove Proposition \ref{subdivision.cover}, which we stated without proof 
back in Section \S \ref{first.section}.

\begin{proof}[Proof of Proposition \ref{subdivision.cover}]

Let $\mathcal{F}$ be a PL set and suppose that $\mathcal{F}'$
is a PL subset of $\mathcal{F}$ satisfying the following property: 
\begin{itemize}

\item[$\cdot$] For any PL space $P$ and any element $W$ of $\mathcal{F}(P)$, there exists an open cover $\mathcal{U}$ of $P$ such that, for each open set $U \in \mathcal{U}$, the restriction $W_U$ is an element of $\mathcal{F}'(U)$.    

\end{itemize} 
 Using this assumption, we will prove that the map 
 $\mathcal{F}'_{\bullet} \hookrightarrow \mathcal{F}_{\bullet}$ between
 underlying simplicial sets is a weak homotopy equivalence.  
We will do this by first showing that the corresponding inclusion of semi-simplicial sets $\mathcal{F}'_* \hookrightarrow \mathcal{F}_*$ 
is a weak homotopy equivalence. 
Consider then a continuous map $g: \Delta^q \rightarrow |\mathcal{F}_*|$ such that 
$g(\partial\Delta^q) \subseteq |\mathcal{F}'_*|$. 
By the compactness of $\Delta^q$, there is a 
finite semi-simplicial set $L_* \subseteq \mathcal{F}_*$ such that $g(\Delta^q) \subseteq |L_*|$ and
$g(\partial\Delta^q) \subseteq |L_*\cap \mathcal{F}'_*|$. 
By our assumptions, and using the fact that $L_*$ is finite, we can find a positive integer $r$ such that, for any element
$W \in L_p \subseteq \mathcal{F}(\Delta^p)$ and any simplex $\sigma$ of $\mathrm{sd}^r\hspace{0.05cm}\Delta^p$, 
the restriction of $W$ over $\sigma$ 
is in $\mathcal{F}'(\sigma)$.   
It then follows from part (ii) of Proposition \ref{subdivision.map} that the image of the map
$\widetilde{g} := \rho^r \circ g$ is contained in $|\mathcal{F}'_*|$, 
where $\rho^r$ is the $r$-fold composition of the 
subdivision map $\rho: |\mathcal{F}_*| \rightarrow  |\mathcal{F}_*|$ 
with itself. 
But, if $\mathcal{H}$ is the homotopy 
from $\mathrm{Id}_{|\mathcal{F}_*|}$ to $\rho$
provided by part (i) of Proposition \ref{subdivision.map}, we obtain
a homotopy $\widetilde{\mathcal{H}}$ from $g$ to $\widetilde{g}$ by setting
\[
\widetilde{\mathcal{H}}_t := (\mathcal{H}_t)^r \circ g,
\]
where, for each $t \in [0,1]$, the map
$(\mathcal{H}_t)^r$ is the $r$-fold composition of $\mathcal{H}_t$ with itself. 
Furthermore, by the observations we made
in Remark \ref{subdivision.remark}, the homotopy
$\widetilde{\mathcal{H}}$ has the property that $\widetilde{\mathcal{H}}_t(\partial \Delta^q) \subseteq |\mathcal{F}'_*|$
for all $t \in [0,1]$. We can therefore conclude
that the inclusion 
$\mathcal{F}'_* \hookrightarrow \mathcal{F}_*$ is a weak homotopy equivalence. 

To finish this proof, note that the natural maps 
$|\mathcal{F}_{*}| \rightarrow |\mathcal{F}_{\bullet}|$ and $|\mathcal{F}'_{*}| \rightarrow |\mathcal{F}'_{\bullet}|$ 
obtained by collapsing degeneracies fit into the following commutative diagram:
\[
\xymatrix{ 
|\mathcal{F}'_{*}| \hspace{0.1cm} \ar@{^{(}->}[r] \ar[d]_{\simeq}  & |\mathcal{F}_{*}| \ar[d]^{\simeq}  \\ 
|\mathcal{F}'_{\bullet}| \hspace{0.1cm} \ar@{^{(}->}[r] & |\mathcal{F}_{\bullet}|.}
\]
Since both vertical maps are weak homotopy equivalences (see \cite{RSD}), it follows that 
$\mathcal{F}'_{\bullet} \hookrightarrow \mathcal{F}_{\bullet}$ is also a weak homotopy equivalence. 
\end{proof}

\end{document}